\documentclass[12pt]{article}

\usepackage{amsmath,amsthm,amsfonts,amssymb, color,xcolor,subcaption,graphicx} 

\usepackage[normalem]{ulem}
\usepackage{todonotes, dsfont}

\newtheorem{theorem}{Theorem}[section]
\newtheorem{corollary}[theorem]{Corollary}
\newtheorem{example}[theorem]{Example}

\newtheorem{lemma}[theorem]{Lemma}
\newtheorem{definition}[theorem]{Definition}
\newtheorem{remark}[theorem]{Remark}

\def\cB{\mathcal{B}}
\def\cC{\mathcal{C}}
\def\cD{\mathcal{D}}
\def\cE{\mathcal{E}}
\def\cF{\mathcal{F}}

\def\cH{\mathcal{H}}

\def\cL{\mathcal{L}}

\def\cN{\mathcal{N}}
\def\cO{\mathcal{O}}
\def\cP{\mathcal{P}}

\def\cS{\mathcal{S}}

\def\cZ{\mathcal{Z}}

\def\bC{\mathbb{C}}

\def\bE{\mathbb{E}}

\def\bN{\mathbb{N}}
\def\bP{\mathbb{P}}
\def\bR{\mathbb{R}}

\newcommand{\fH}{\mathfrak{H}}

\def\e{\varepsilon}
\def\k{\kappa}

\def\bt{{\bf t}}

\newcommand{\fm}{\mathfrak{m}}

\begin{document}

\title{SPDEs with time-independent\\
L\'evy colored noise}

\author{Raluca M. Balan\footnote{Corresponding author. University of Ottawa, Department of Mathematics and Statistics, 150 Louis Pasteur Private, Ottawa, Ontario, K1N 6N5, Canada. E-mail address: rbalan@uottawa.ca.} \footnote{Research supported by a grant from the Natural Sciences and Engineering Research Council of Canada.}
\and
Jinxin Wang \footnote{University of Ottawa, Department of Mathematics and Statistics, 150 Louis Pasteur Private, Ottawa, Ontario, K1N 6N5, Canada. E-mail address: jwang023@uottawa.ca.}
}

\date{April 27, 2026}
\maketitle

\begin{abstract}
\noindent
 In this article, we introduce a time-independent version of the L\'evy colored noise considered in \cite{B15,BJ26}. We study the existence of the solution of a linear stochastic partial differential equation with this type of noise, and we identify some necessary conditions which guarantee that the solution has finite $p$-th order moments. Using tools from Malliavin calculus, we investigate the existence of the solution for the equation with multiplicative noise. As examples, we consider the stochastic heat and wave equations in any dimension $d \geq 1$.
\end{abstract}

\noindent {\em MSC 2020:} Primary 60H15; Secondary 60G60, 60G51

\vspace{1mm}

\noindent {\em Keywords:} stochastic partial differential equations, random fields, Malliavin calculus, Poisson random measure, L\'evy noise

\pagebreak


\section{Introduction}

SPDEs have been studied intensively in the literature in the last three decades, using either the random field approach initiated by Walsh \cite{walsh86} or the semigroup approach of Da Prato and Zabczyk \cite{DZ92}. We mention briefly some important developments and few of classical references, focusing only on the random field approach, without aiming to include a comprehensive list. 

Classical models based on the random field approach involve equations perturbed by a space-time Gaussian noise, a space-time generalization of Brownian motion. In \cite{dalang99}, Dalang introduced a spatially homogeneous Gaussian noise (which is correlated in space), and developed an It\^o integral with respect to this noise. Subsequently, Nualart and his co-authors studied equations with temporally-correlated Gaussian noise, using tools from Malliavin calculus, e.g. \cite{nualart06}. Many interesting properties of the solutions of these equations have been discovered in the recent years, such as intermittency \cite{K14}, or exact asymptotic behavior of moments \cite{chen17}. 

At the same time, models driven by a noise which resembles a L\'evy process have also been studied, but with more limited success. 
This noise was called a {\em L\'evy white noise} \cite{B15}, or a {\em cylindrical L\'evy noise} \cite{kumar-riedle20}, depending on what approach was used. If the noise has finite variance, one can develop an It\^o calculus \cite{applebaum09} and a Malliavin calculus \cite{last16}, using the compensated version of the underlying Poisson random measure of the noise. If the noise has infinite variance, truncation techniques have to be used to eliminate the large jumps of the noise \cite{chong17-SPA,CDH19}. The {\em L\'evy colored noise}
was introduced in \cite{B15} using the convolution with a ``coloration'' kernel, and an extensive study of SPDEs with this type of noise can be found in \cite{BJ26}.

In all the afore-mentioned references, the noise includes a time component. In \cite{hu01}, Hu studied for the first time an SPDE (namely the Parabolic Anderson Model, or PAM), driven by a time-independent Gaussian noise, and discovered results which were slightly different compared with the time-dependent case. Still in the Gaussian case, a parallel analysis of various properties of the solutions (for instance Feynman-Kac-type formulas, intermittency and H\"older continuity) has been carried out in \cite{HHNT15} for (PAM) with  time-dependent and time-independent case. More recently, \cite{BY23} extended to the time-independent  noise, the Quantitative Central Limit theorem (QCLT) obtained in \cite{HNV20} for (PAM) with space-time Gaussian white noise.

In this context, a natural question arises: to what extent, some of the (limited) results that are known for the time-dependent L\'evy noise remain valid in the time-independent case? The major barrier that one faces is the lack of any type of martingale structure, which means that It\^o calculus cannot be used, leaving Malliavin calculus as the only option. Another barrier is the absence of an estimate for the $p$-th moment of a multiple Poisson integral: unlike the Gaussian case, on the Poisson space, hyper-contractivity property does not hold. A ``restricted'' hyper-contractivity property was proved in \cite{NPY19}. Despite these significant challenges, some positive answers to the question above exist, and we gather them here. More precisely, in the present article, we show that most the properties related to moments of the solutions of  linear SPDEs discovered in \cite{BJ26} for the L\'evy colored noise remain valid in the time-independent case, under natural assumptions on the fundamental solution of the differential operator. Moreover, under slightly stronger versions of these assumptions, the equations with multiplicative noise have unique solutions, for which we provide the explicit Poisson-chaos expansion. Finally, we show that these assumptions are satisfied by the fundamental solutions of the heat and wave operators in any dimension, provided that the spectral measure $\mu$ associated with the spatial coloration kernel $\k$ of the noise satisfies Dalang's condition (as in the Gaussian case).

\medskip

We begin now to introduce the precise definition of the noise, and associated objects.
Let $N$ be a Poisson random measure (PRM) on the space $({\bf Z},\cZ)$ of intensity $\fm$, where
\begin{equation}
\label{def-Z}
({\bf Z},\cZ,\fm)=\big(\bR^d \times \bR_0,\ \cB(\bR^d) \otimes \cB(\bR_0), \ {\rm Leb} \times \nu\big),
\end{equation}
defined on a complete probability space $(\Omega,\cF,\bP)$. We refer to \cite{resnick07} for the definition of a PRM. Let $\widehat{N}(F)=N(F)-\fm(F)$ be the compensated version of $N$. The space $\bR_0:=\bR \verb2\2 \{0\}$ is endowed with the distance $d(x,y)=|x^{-1}-y^{-1}|$. We assume that $\nu$ is a measure on $\bR_0$ which satisfies the condition:
\[
m_2<\infty,
\]
where
\[
m_p:=\int_{\bR_0}|z|^p \nu(dz) \quad \mbox{for any $p>0$}.
\]

\medskip

We define the {\em (time-independent) L\'evy white noise} $L$ by:
\begin{equation}
\label{def-L}
L(A)=\int_{A \times \bR_0}z \widehat{N}(dx,dz) \quad \mbox{for all $A \in \cB_b(\bR^d)$},
\end{equation}
where $\cB_b(\bR^d)$ is the class of bounded Borel sets of $\bR^d$. Then, for any set $A \in \cB_b(\bR^d)$, 
\[
\bE[L(A)]=0 \quad \mbox{and} \quad \bE|L(A)|^2=m_2|A|,
\] where $|A|$ is the Lebesgue measure of $A$.

We denote $L(1_A)=L(A)$ for any set $A \in \cB_b(\bR^d)$. The map $1_{A} \mapsto L(1_A) \in L^2(\Omega)$ is an isometry which can be extended to $L^2(\bR^d)$. For any $\varphi \in L^2(\bR^d)$, we let
\[
L(\varphi):=\int_{\bR^d}\varphi(x) L(dx)=\int_{\bR^d \times \bR_0} \varphi(x)z \widehat{N}(dx,dz).
\]
Then
\[
\bE[L(\varphi)L(\psi)]=m_2 \int_{\bR^d}\varphi(x) \psi(x)dx \quad \mbox{for any $\varphi,\psi \in L^2(\bR^d)$}.
\]

For any $\varphi \in L^2(\bR^d)$, $L(\varphi)$ has characteristic function:
\begin{equation}
\label{ch-funct-L}
\bE(e^{i\theta L(\varphi)})=\exp\left\{ \int_{\bR_0} \int_{\bR^d}\big(e^{i\theta \varphi(x) z}-1-i\theta \varphi(x)z\big) dx \nu(dz) \right\} \quad \mbox{for any} \ \theta \in  \bR.
\end{equation}

In particular, letting $L(A)=L(1_{A})$ for any $A \in \cB_b(\bR^d)$, we have:
\[
\bE(e^{i\theta L(A)})=\exp\left\{ |A|\int_{\bR_0} \big(e^{i\theta z}-1-i\theta z\big) \nu(dz) \right\} \quad \mbox{for any} \ \theta \in  \bR.
\]

Next, we introduce the {\em (time-independent) L\'evy colored noise}, given by:
\[
X(\varphi)=L(\varphi*\k) \quad \mbox{for all} \ \varphi \in \cS(\bR^d),
\]
where $\cS(\bR^d)$ is the set of rapidly decreasing functions on $\bR^d$. 
Then $X=\{X(\varphi);\varphi \in \cS(\bR^d)\}$ is the time-independent version of 
the {\em L\'evy colored noise} introduced in \cite{BJ26} as a counterpart of the spatially-homogeneous Gaussian noise considered in \cite{dalang99}.

\medskip

We denote by $x\cdot y=\sum_{j=1}^d x_j y_j$ the inner product between $x=(x_1,\ldots,x_d)\in \bR^d$ and $y=(y_1,\ldots,y_d) \in \bR^d$, and we let $|x |=(x \cdot x)^{1/2}$ be the Euclidean norm of $x$. We let $\cF \varphi(\xi)=\int_{\bR^d}e^{-i\xi \cdot x}\varphi(x)dx$ be the Fourier transform of $\varphi \in L^1(\bR^d)$. 

\medskip

Throughout this article, we assume that $\k:\bR^d \to [0,\infty]$ is a tempered function which satisfies Assumption A1 of \cite{BJ26}. We recall this assumption below: 

\medskip

\noindent {\bf Assumption A1.} 
Let $\k:\bR^d \to [0,\infty]$ be a tempered function such that:

(a) the Fourier transform $h:=\cF \k$ in $\cS_{\bC}'(\bR^d)$ is a tempered function on $\bR^d$:
\[
\int_{\bR^d} h(\xi)\phi(\xi)d\xi=\int_{\bR^d}\k(x) \cF \phi(x)dx \quad \mbox{for all $\phi \in \cS_{\bC}(\bR^d)$}.
\]

(b) $|h|^2$ is a tempered function;

(c) there exists a tempered function $f:\bR^d \to [0,\infty]$ such that $|h|^2=\cF f$ in $\cS'(\bR^d)$:
\begin{equation}
\label{Four-h2}
\int_{\bR^d} |h(\xi)|^2 \phi(\xi)d\xi=\int_{\bR^d}f(x) \cF \phi(x)dx \quad \mbox{for all $\phi \in \cS_{\bC}(\bR^d)$}.
\end{equation}

Then $f=\k*\widetilde{\k}$, where $\widetilde{\k}(x)=\k(-x)$ for all $x \in \bR^d$,
and $\cF f=\mu$, where $\mu$ is the tempered measure on $\bR^d$ given by:

\begin{equation}
\label{def-mu}
\mu(d\xi)=\frac{1}{(2\pi)^d}|\cF \k(\xi)|^2 d\xi.
\end{equation}

For any $\varphi,\psi \in \cS(\bR^d)$, we have:
\[
\int_{\bR^d} \int_{\bR^d}\varphi(x)\psi(y)f(x-y)dxdy=\int_{\bR^d}\cF \varphi(\xi) \overline{\cF \psi(\xi)}\mu(d\xi).
\]

Below are some examples of kernels which satisfy Assumption A1. 

\begin{example} (The heat kernel)
\label{heat-ex}
{\rm Let  $\k=H_{d,\alpha/2}$ for some $\alpha>0$, where
\[
H_{d,\alpha}(x)=\frac{1}{(2\pi \alpha)^{d/2}}\exp\left(-\frac{|x|^2}{2\alpha}\right) \quad \mbox{for $\alpha>0$}.
\]
Assumption A holds with $f=\k * \k=H_{d,\alpha}$ and $|\cF \k(\xi)|^2=\cF f(\xi)=\exp(-\alpha|\xi|^2/2)$.
}
\end{example}

\begin{example} (The Riesz kernel)
\label{Riesz-ex}
{\rm Let $\k=R_{d,\alpha/2}$ for some $\alpha\in (0,d)$, where
\[
R_{d,\alpha}(x)=C_{d,\alpha}|x|^{-(d-\alpha)} \quad \mbox{with} \quad C_{d,\alpha}=\pi^{-d/2}2^{-\alpha}\frac{\Gamma(\frac{d-\alpha}{2})}{\Gamma(\frac{\alpha}{2})}.
\]
It is known that $\cF R_{d,\alpha}(\xi)=|\xi|^{-\alpha}$ and $R_{d,\alpha}*R_{d,\beta}=R_{d,\alpha+\beta}$ (see \cite{stein70}, p.118). Hence, Assumption A holds with $f=\k * \k=R_{d,\alpha}$ and $|\cF \k (\xi)|^2=\cF f(\xi)=|\xi|^{-\alpha}$.
}
\end{example}

\begin{example} (The Bessel kernel)
\label{Bessel-ex}
{\rm Let $\k=B_{d,\alpha/2}$ for some $\alpha>0$, where
\[
B_{d,\alpha}(x)=\frac{1}{\Gamma(\alpha/2)}\int_0^{\infty} w^{\alpha/2-1}e^{-w} \frac{1}{(4\pi w)^{d/2}}\exp\left(-\frac{|x|^2}{4w}\right)dw.
\]
It is known that $\cF B_{d,\alpha}(\xi)=(1+|\xi|^{2})^{-\alpha/2}$ and $B_{d,\alpha}*B_{d,\beta}=B_{d,\alpha+\beta}$ (see \cite{stein70}, p.131).  Hence, Assumption A holds with $f=\k * \k=B_{d,\alpha}$ and $|\cF \k (\xi)|^2=\cF f(\xi)=(1+|\xi|^2)^{-\alpha/2}$. Moreover, $B_{d,\alpha}\in L^1(\bR^d)$.
}
\end{example}

By Lemma 2.7 of \cite{BJ26}, for any $\varphi \in \cS(\bR^d)$, $\varphi* \k \in L^2(\bR^d)$ and $\cF (\varphi * \k)=\cF \varphi \cF \k$ in $L^2_{\bC}(\bR)$. Hence,
the noise $X$ is well-defined. Moreover, for $\varphi,\psi \in \cS(\bR^d)$,
\begin{align}
\nonumber
\bE[X(\varphi) X(\psi)]&=\bE[L(\varphi * \k) L(\psi *\k)]=m_2\langle \varphi*\k,\psi*\k \rangle_{L^2(\bR^d)}\\
\label{def-prod0}
&=m_2 \int_{\bR^d} \int_{\bR^d}\varphi(x)\psi(y)f(x-y)dxdy =:m_2 \langle \varphi,\psi \rangle_0.
\end{align}

The map $\varphi \mapsto X(\varphi) \in L^2(\Omega)$ is an isometry which can be extended to $\cP_{0,d}(\bR^d)$, the completion of $\cD(\bR^d)$ with respect to $\langle \cdot,\cdot \rangle_0$.  Then $\cP_{0,d}(\bR^d)$ is the space of (deterministic) integrands with respect to $X$.  We will use the notation:
\[
X(S)=\int_{\bR^d}S(x)X(dx) \quad  \mbox{for any $S \in \cP_{0,d}(\bR^d)$}.
\]

\medskip

{\em Notation:} 
The subscript ``$d$'' is used to emphasize that the integrands are deterministic. The set $\bR^d$ is included in the notation to avoid the confusion with the space $\cP_{0,d}$ introduced in \cite{BJ26}, which is the space of integrands that depend also on time. 

\medskip

Let $\cP_{+,d}(\bR^d)$ be the set of measurable functions $\varphi:\bR^d \to \bR$ such that
\begin{align*}
\| \varphi\|_{+}^2:= \int_{\bR^d}\int_{\bR^d}|\varphi(x)||\varphi(y)| f(x-y)dxdy<\infty.
\end{align*}

Since $\|\cdot\|_0 \leq \|\cdot\|_{+}$, it follows that
\[
\cP_{+,d}(\bR^d) \subset \cP_{0,d}(\bR^d).
\]

It can be proved that (see Lemma 3.1 of \cite{BJ26}), for any $\varphi \in \cP_{+,d}(\bR^d)$, 
\[
\varphi*\k \in L^2(\bR^d) \quad \mbox{and} \quad X(\varphi)=L(\varphi*\k).
\]


The space $\cP_{0,d}(\bR^d)$ contains distributions. Theorem 3.5.(1) of \cite{BGP12} shows that
\[
\overline{\cP}_d(\bR^d) \subset \cP_{0,d}(\bR^d),
\]
where 
\[
\overline{\cP}_d(\bR^d)
=\{S \in \cS'(\bR^d); \cF S \ \mbox{is a function}, \int_{\bR^d}|\cF S(\xi)|^2 \mu(d\xi)<\infty \}.
\]
See also Theorem 4.7 in \cite{B15}. The following remark gives more information about 
$\cP_{0,d}(\bR^d)$.

\begin{remark}
{\rm
Theorem 3.5.(2) of \cite{BGP12} shows that if $|h|^{-2}1_{\{|h|>0\}}$ is also tempered, then
\[
\overline{\cP}_d(\bR^d) = \cP_{0,d}(\bR^d).
\] 

If $k=B_{d,\alpha/2}$ for some $\alpha>0$, then $|h(\xi)|^{-2}1_{\{|h(\xi)|>0\}}=(1+|\xi|^2)^{\alpha/2}$ is tempered, and
\begin{align*}
\cP_{0,d}(\bR^d)&=\overline{\cP}_d(\bR^d)
=\{S \in \cS'(\bR^d); \cF S \ \mbox{is a function}, \int_{\bR^d}|\cF S(\xi)|^2 (1+|\xi|^2)^{-\alpha/2}d\xi<\infty \}\\
&=W^{-\frac{\alpha}{2},2}(\bR^d).
\end{align*}
where $W^{s,2}(\bR^d)=H^{s}(\bR^d)$ is the Sobolev space of order $s\in \bR$.

If $k=R_{d,\alpha/2}$ for some $\alpha\in (0,d)$, then $|h(\xi)|^{-2}1_{\{|h(\xi)|>0\}}=|\xi|^{\alpha}$ is tempered, and
\begin{align*}
\cP_{0,d}(\bR^d)&=\overline{\cP}_d(\bR^d)
=\{S \in \cS'(\bR^d); \cF S \ \mbox{is a function}, \int_{\bR^d}|\cF S(\xi)|^2 |\xi|^{-\alpha}d\xi<\infty \}\\
& \subseteq W^{-\frac{\alpha}{2},2}(\bR^d).
\end{align*}
}
\end{remark}

\medskip

This article is organized as follows. In Section \ref{section-mom}, we present a moment inequality for the stochastic integral with respect to $\widehat{N}$. In Section \ref{section-lin}, we study a linear SPDE with noise $X$, focusing on the existence of the solution and estimates for the moments. In Section \ref{section-mult}, we study an SPDE with multiplicative noise $X$ using Malliavin calculus on the Poisson space. The appendix contains some auxiliary results.

\section{Moment estimate}
\label{section-mom}

In this section, we present an inequality for the stochastic integral with respect to $\widehat{N}$ in the case of deterministic integrand. This inequality will be used in Section \ref{section-lin} below for estimating the moments of the solution of the linear equation with noise $X$.

\medskip

A process $(X_t)_{t\geq 0}$ is called {\em c\`adl\`ag} if it is right-continuous and has left limits.
We recall Rosenthal's inequality for c\`adl\`ag martingales (see e.g. Lemma 2.1 of \cite{DV90}). 

\begin{theorem}[Rosenthal's inequality]
\label{rosenthal}
Let $\{M_t\}_{t \geq 0}$ be a c\`adl\`ag square-integrable martingale with $M_0=0$ and $\langle M \rangle$ be its predictable quadratic variation. We denote by $(\Delta M)_t$ the jump size of $M$ at time $t$. Then for any $p \geq 2$, there exists a constant $\cB_p>0$ depending on $p$ such that for any $t>0$,
$$\|\sup_{s \leq t}|M_s|\|_p \leq \cB_p \Big(\| \langle M \rangle_t^{1/2}\|_p +\|\sup_{s \leq t}|(\Delta M)_s| \|_p\Big).$$
\end{theorem}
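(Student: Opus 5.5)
The statement is Rosenthal's inequality for c\`adl\`ag martingales, and we only use it for martingales built from stochastic integrals with respect to $\widehat{N}$. The plan is to reduce it to the Burkholder--Davis--Gundy (BDG) inequality together with a self-improving estimate for the quadratic variation $[M]$. By BDG, for $p\geq 1$,
\[
\|\sup_{s\le t}|M_s|\|_p\le C_p\|[M]_t^{1/2}\|_p .
\]
So it is enough to show
\[
x:=\|[M]_t^{1/2}\|_p\le C\big(a+b\big),\qquad a:=\|\langle M\rangle_t^{1/2}\|_p,\quad b:=\|\sup_{s\le t}|(\Delta M)_s|\|_p .
\]
By a standard localization (stopping $M$ at times $\tau_n$ for which $[M]_{\tau_n}$ is bounded, then letting $n\to\infty$ by monotone convergence), I may assume from the start that $x<\infty$.

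The key decomposition is $[M]=\langle M\rangle+N$, where $N:=[M]-\langle M\rangle$ is a local martingale. I first treat the quasi-left-continuous case, which is the one arising for integrals against $\widehat{N}$. There $\langle M\rangle$ is continuous, so the jumps are $\Delta N=(\Delta M)^2$. Hence
\[
[N]_t=\sum_{s\le t}(\Delta M)_s^4\le \sup_{s\le t}(\Delta M)_s^2\,[M]_t .
\]
Since $p/2\geq 1$, I apply BDG to $N$ with exponent $p/2$ and then H\"older's inequality:
\[
\|N_t\|_{p/2}\le C\|[N]_t^{1/2}\|_{p/2}\le C\big\|\sup_{s\le t}|(\Delta M)_s|\,[M]_t^{1/2}\big\|_{p/2}\le C\,b\,x .
\]
Taking the $L^{p/2}$-norm in $[M]_t\le \langle M\rangle_t+|N_t|$ then gives
\[
x^2\le a^2+C\,b\,x .
\]
This quadratic inequality forces $x\le a+Cb$, which completes the proof in this case. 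For $p=2$ the bound is immediate anyway, since $\bE[M]_t=\bE\langle M\rangle_t$.

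The main obstacle is the general c\`adl\`ag case, where $\langle M\rangle$ may jump at predictable times. Then $\Delta N=(\Delta M)^2-\Delta\langle M\rangle$, and
\[
[N]_t\le 2\sum_{s\le t}(\Delta M)_s^4+2\sum_{s\le t}(\Delta\langle M\rangle_s)^2 .
\]
The first sum is handled exactly as above. For the second, I would bound it by $\sup_{s\le t}\Delta\langle M\rangle_s\cdot\langle M\rangle_t$. The jump $\Delta\langle M\rangle_s$ is the predictable projection of $(\Delta M)_s^2$, so I would control $\sup_{s\le t}\Delta\langle M\rangle_s$ in $L^{p/2}$ by $\sup_{s\le t}(\Delta M)_s^2$. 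This uses a Doob/Lenglart-type inequality for predictable projections (Garsia's lemma), which is valid since $p/2\ge1$. After this, the same quadratic-inequality argument yields $x\le C(a+b)$.

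For the purposes of this paper, however, the quasi-left-continuous case is all that is needed, since the martingales to which the inequality will be applied are integrals against $\widehat{N}$.
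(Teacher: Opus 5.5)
The paper gives no proof of this statement. It imports it verbatim as Lemma 2.1 of \cite{DV90} and uses it as a black box in Step 1 of Theorem \ref{ros-I1}. Your self-contained route is correct and is a standard way to derive Rosenthal from BDG. It runs: BDG for $M$ in $L^p$; the decomposition $[M]=\langle M\rangle+N$; BDG for the finite-variation local martingale $N$ in $L^{p/2}$, together with $[N]_t\le\sup_{s\le t}(\Delta M_s)^2\,[M]_t$ and H\"older; and the bootstrap $x^2\le a^2+Cbx$.

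What it buys over the citation is transparency about what the paper really needs. Your first case uses only the continuity of $\langle M\rangle$, to get $\Delta N=(\Delta M)^2$. In Step 1 of Theorem \ref{ros-I1} the bracket $\langle M\rangle_t=\int_{B\times\Gamma}1_{A_t}(x)|\phi(x,z)|^2dx\,\nu(dz)$ is deterministic and continuous. So that case suffices there, with an explicit $\cB_p$ in terms of BDG constants. The citation, in exchange, gives the general c\`adl\`ag statement with no work.

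Your general case is also sound. The Garsia step can be made explicit. Let $B_s=\sup_{r\le s}(\Delta M_r)^2$, with compensator $\widetilde B$. Since $\Delta\langle M\rangle$ is the predictable projection of $(\Delta M)^2\le B$, you get $\Delta\langle M\rangle_s\le B_{s-}+\Delta\widetilde B_s\le B_t+\widetilde B_t$ for $s\le t$. The Garsia--Neveu inequality then gives $\|\widetilde B_t\|_{p/2}\le\frac p2\|B_t\|_{p/2}$, valid for all $p/2\ge 1$, as you said.

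One step needs a small repair. Because of jumps, you cannot in general stop so that $[M]_{\tau_n}$ is bounded; an unpredictable jump of unbounded size cannot be stopped before. Take instead $\tau_n=\inf\{s:[M]_s\ge n\}$. Then $[M]_{t\wedge\tau_n}\le n+\sup_{s\le t}(\Delta M_s)^2$, so $\|[M]_{t\wedge\tau_n}^{1/2}\|_p\le\sqrt n+b<\infty$ whenever $b<\infty$; the claim is trivial if $b=\infty$. The stopped martingale has no larger $a$ and $b$, so your quadratic bound applies to it, and monotone convergence in $n$ finishes.
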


We consider the Hilbert space $\cH=L^2(Z,\cZ,\fm)$, where $({\bf Z},\cZ,\fm)$ is defined by \eqref{def-Z}.

We denote
\[
I_1(\phi)=
\int_{\bR^{d} \times \bR_0}\phi(x,z)\widehat{N}(dx,dz) \quad \mbox{for $\phi \in \cH$}.
\]

We denote by $\|\cdot\|_p$ the norm in $L^p(\Omega)$, for any $p\geq 1$. We have the following result.

\begin{theorem}
\label{ros-I1}
For any function $\phi \in \cH$ and for any $p\geq 2$,
\begin{equation}
\label{mom-I1}
\|I_1(\phi)\|_p \leq \cB_p (\|\phi\|_{\cH}+\|\phi\|_{L^p({\bf Z})}),
\end{equation}
where $\cB_p$ is the constant from Theorem \ref{rosenthal}.
\end{theorem}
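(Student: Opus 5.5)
We plan to deduce the inequality from Theorem \ref{rosenthal} by embedding $I_1(\phi)$ as the terminal value of a c\`adl\`ag martingale. There is no time variable in the noise, so we create an artificial one: fix a measurable bijection, or simply an ordering, of space. The simplest choice is to use the first spatial coordinate. We would first treat a nice class of $\phi$ and then pass to the limit. Take $\phi$ bounded with support in $K\times\{\e<|z|<\e^{-1}\}$ for some compact $K\subset\bR^d$ and some $\e>0$; such functions are dense in $\cH\cap L^p({\bf Z})$. For $t\in[0,1]$, choose a monotone family of Borel sets $A_t\subset\bR^d$ with $A_0=\emptyset$, $A_1\supset K$ and $t\mapsto|A_t\cap K|$ continuous. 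For instance, rescale the first coordinate and let $A_t=\{x:\ x_1\le a+t(b-a)\}$, where $K\subset[a,b]\times\bR^{d-1}$. Define
\[
M_t=\int_{A_t\times\bR_0}\phi(x,z)\widehat N(dx,dz),
\]
and let $\cF_t$ be generated by $N$ restricted to $A_t\times\bR_0$ (completed and made right-continuous).

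The key steps, in order, are the following. (i) $M$ is a square-integrable $(\cF_t)$-martingale with $M_0=0$. This follows from the independence of $N$ on disjoint sets and from the fact that $\widehat N$ has mean zero. (ii) $M$ has a c\`adl\`ag version. Because $\phi$ is supported on a set of finite $\fm$-measure, $N$ has finitely many atoms there, so
\[
M_t=\sum_{(x,z)\in \mathrm{supp}N,\ x\in A_t}\phi(x,z)-\int_{A_t\times\bR_0}\phi\,d\fm .
\]
This is a finite sum of jumps plus a continuous compensator. (iii) The predictable quadratic variation is the deterministic quantity
\[
\langle M\rangle_t=\int_{A_t\times\bR_0}\phi^2\,d\fm ,
\]
so $\|\langle M\rangle_1^{1/2}\|_p\le\|\phi\|_{\cH}$. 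This holds because $M$ has independent increments, so $M_t^2-\bE M_t^2$ is a martingale and $\bE M_t^2$ is continuous in $t$. (iv) The jumps of $M$ are $\phi(x,z)$ at the atoms of $N$. Hence
\[
\sup_s|\Delta M_s|^p\le\sum_{\text{atoms}}|\phi(x,z)|^p=\int|\phi|^p\,dN ,
\]
and taking expectations gives $\bE\sup_s|\Delta M_s|^p\le\|\phi\|_{L^p({\bf Z})}^p$. (v) Apply Theorem \ref{rosenthal} at $t=1$, using $|M_1|\le\sup_{s\le1}|M_s|$ and $M_1=I_1(\phi)$.

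For general $\phi\in\cH$ we may assume $\|\phi\|_{L^p({\bf Z})}<\infty$, since otherwise there is nothing to prove. Take $\phi_n=(\phi\wedge n)\vee(-n)\cdot 1_{K_n\times\{1/n<|z|<n\}}$ with compacts $K_n\uparrow\bR^d$. Then $\phi_n\to\phi$ in both $\cH$ and $L^p({\bf Z})$ by dominated convergence. By the isometry, $I_1(\phi_n)\to I_1(\phi)$ in $L^2(\Omega)$, so along a subsequence almost surely. Fatou's lemma then passes the bound to the limit.

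The main obstacle is step (ii) together with the identification of $\langle M\rangle$. These require care about the filtration and the continuity of $t\mapsto\int_{A_t}\phi^2\,d\fm$: we need $M$ to have no fixed jump times, so that the deterministic compensator is genuinely predictable. Choosing $A_t$ as half-spaces in $x_1$ ensures this, because Lebesgue measure of the hyperplanes $\{x_1=c\}$ is zero. Restricting first to $\phi$ supported on a set of finite measure avoids all issues with infinitely many small jumps.
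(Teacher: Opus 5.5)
Your proposal is correct and follows essentially the same route as the paper. The paper also manufactures a time parameter by exhausting space, using balls $A_t=\{|x|\le t\}$ where you use half-spaces, and it first restricts to $\phi 1_{B\times\Gamma}$ with $\fm(B\times\Gamma)<\infty$; it then identifies $\langle M\rangle$ as deterministic, bounds $\sup_s|\Delta M_s|^p$ by $\int|\phi|^p\,dN$, applies Theorem \ref{rosenthal}, and removes the restriction by Fatou's lemma.

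The one detail you should state explicitly is that, almost surely, no two atoms of $N$ in the support of $\phi$ share the same first coordinate. This ensures each jump of $M$ is a single value $\phi(x,z)$ rather than a sum, which the bound $\sup_s|\Delta M_s|^p\le\sum|\phi|^p$ needs. The paper checks the analogous fact $\bP(|X_i|=|X_j|)=0$ for its balls.
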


\begin{proof}
{\em Step 1.} In this step, we proved that for any $B \in \cB_b(\bR^d)$ and $\Gamma \in \cB_b(\bR_0)$ fixed,
\begin{equation}
\label{mom-I2}
\|I_1(\phi 1_{B \times \Gamma})\|_p \leq \cB_p (\|\phi 1_{B \times \Gamma}\|_{\cH}+\|\phi 1_{B \times \Gamma}\|_{L^p({\bf Z})}).
\end{equation}
To avoid trivial situations, we assume that $|B|>0$ and $\nu(\Gamma)>0$.

For any $t>0$, let $A_t=\{x \in \bR^d;|x|\leq t\}$. We consider the filtration
\[
\cF_t=\sigma\big(\big\{N(A \times C); A \in \cB(\bR^d),A \subseteq A_t, C \in \cB_b(\bR_0)\big\}\big) \vee \cN,
\]
where $\cN$ is the class of $\bP$-negligible sets. We define $M_0=0$ and
\[
M_t:=I_1(\phi 1_{(B \cap A_t)\times \Gamma} )=\int_{B \times \Gamma} 1_{A_t}(x)\phi(x,z)\widehat{N}(dx,dz), \quad \ \mbox{for $t>0$}.
\]
Then $(M_t)_{t\geq 0}$ is a martingale with respect to $(\cF_t)_{t\geq 0}$ since for any $0\leq s<t$,
\[
M_t-M_s=\int_{B \times \Gamma} 1_{A_t\verb2\2 A_s}(x)\phi(x,z)\widehat{N}(dx,dz) \quad \mbox{is independent of $\cF_s$},
\]
and $\bE[M_t-M_s]=0$. Moreover, $(M_t)_{t\geq 0}$ is continuous in $L^2(\Omega)$: if $t_n \to t$, then
\[
\bE|M_{t_n}-M_t|^2 =\int_{B \times \Gamma} |1_{A_{t_n}} (x)-1_{A_t}(x)| |\phi(x,z)|^2 dx\nu(dz) \to 0.
\]
By Doob's regularity theorem (see e.g. Theorem 3.40 of \cite{PZ07}), any stochastically continuous submartingale has a c\`adl\`ag modification. Hence, $(M_t)_{t\geq 0}$ has a c\`adl\`ag modification, which we denote also $(M_t)_{t\geq 0}$. 

We apply Theorem \ref{rosenthal}. Clearly, the predictable quadratic variation of $(M_t)_{t\geq 0}$ is:
\[
\langle M\rangle_t=\int_{B \times \Gamma}1_{A_t}(x)|\phi(x,z)|^2 dx \nu(dz).
\]

Since the sets $B$ and $\Gamma$ are bounded, $M_t$ can be written as
\[
M_t=\int_{B \times \Gamma}1_{A_t}(x)\phi(x,z)N(dx,dz)-\int_{B \times \Gamma}1_{A_t}(x)\phi(x,z)dx\nu(dz)=:M_t^{(d)}-M_t^{(c)},
\]
where $(M_t^{(c)})_{t\geq 0}$ is continuous. To study the jumps of $(M_t^{(d)})_{t\geq 0}$, we recall the following equality in distribution:
\[
N|_{B \times \Gamma}\stackrel{d}{=}\sum_{i=1}^{\Lambda}\delta_{(X_i,Z_i)}=:N',
\]
 where $\Lambda$ is a Poisson random variable with mean $\lambda=|B|\nu(\Gamma)$, and $\{(X_i,Z_i)\}_{i\geq 1}$ are i.i.d. on $B \times \Gamma$ with law $\lambda^{-1}({\rm Leb} \times \nu)$, independent of $\Lambda$; see  e.g. Section 5.4.2 of \cite{resnick07}. Hence $I_1(\phi 1_{B \times \Gamma})$ has the same law (and hence, the same $p$-th moment) as $\int_{B \times \Gamma} \phi(x,z) \widehat{N}'(dx,dz)$. Therefore, without loss of generality, we can assume that
 \[
N|_{B \times \Gamma}=\sum_{i=1}^{\Lambda}\delta_{(X_i,Z_i)} \quad \mbox{a.s.}
\]
which means that with probability 1,
\[
M_t^{(d)}=\sum_{i=1}^{\Lambda}1_{A_t}(X_i) \phi(X_i,Z_i) = \sum_{i=1}^{\Lambda}1_{\{|X_i|\leq t\}} \phi(X_i,Z_i) \quad \mbox{for all $t>0$}.
\]

A deterministic function $f:\bR_{+} \to \bR$ given by $f(t)=\sum_{i=1}^k 1_{\{|x_i|\leq t\}} a_i$ for some $a_1,x_1,\ldots,a_n,x_n \in \bR$ with $|x_i| \not=|x_j|$ for all $i \not=j$, is c\`adl\`ag and has jumps at $t_i=|x_i|$ with jump sizes $\Delta f(t_i)=a_i$, for $i=1,\ldots,k$. In our case, 
\[
\bP(|X_i|=|X_j|)=\frac{1}{|B|^2}  \int_{B^2} 1_{\{|x|=|y|\}} dxdy=0 \quad \mbox{for any $i\not=j$}.
\]
Then the event $F=\bigcup_{i\not=j}\{|X_i|=|X_j|\}$ has probability 0. We will work on  $\widetilde{\Omega}=F^c$. On this event, the function $t\mapsto M_t^{(d)}$ has jumps at $T_i=|X_i|$ with jump sizes $\phi(X_i,Z_i)$, for any $i=1,\ldots,\Lambda$, if $\Lambda>0$, or has no jumps at all if $\Lambda=0$. The same thing is true about the function $t \mapsto M_t$. It follows that on $\widetilde{\Omega}$,
\begin{align*}
\sup_{s\leq t} |\Delta M_s|^p \leq  \sum_{s\leq t} |\Delta M_s |^p=\sum_{|X_i|\leq t} |\phi(X_i,Z_i)|^p=\int_{B \times \Gamma} 1_{\{|x|\leq t\}} |\phi(x,z)|^p N(dx,dz).
\end{align*}
Taking expectation, we infer that:
\[
\bE\big[ \sup_{s\leq t} |\Delta M_s|^p\big]\leq \int_{B \times \Gamma} 1_{A_t}(x) |\phi(x,z)|^p dx\nu(dz),
\]
and hence
\[
\big\|\sup_{s\leq t} |\Delta M_s\big\|_p \leq \left( \int_{B \times \Gamma} 1_{A_t}(x) |\phi(x,z)|^p dx\nu(dz) \right)^{1/p}.
\]

Using Theorem \ref{rosenthal}, we infer that for any $t>0$,
\[
\|M_t\|_p \leq \cB_p \left\{ \left( \int_{B \times \Gamma} 1_{A_t}(x) |\phi(x,z)|^2 dx\nu(dz) \right)^{1/2}+ \left( \int_{B \times \Gamma} 1_{A_t}(x) |\phi(x,z)|^p dx\nu(dz) \right)^{1/p} \right\}
\]

Next, we consider a sequence $(t_n)_{n\ge 1}$ such that $t_n \to \infty$. By the isometry property, $M_{t_n} \to I_1(\phi 1_{B \times \Gamma})$ in $L^2(\Omega)$ as $n \to \infty$. Hence,  $M_{t_n} \to I_1(\phi 1_{B \times \Gamma})$ a.s., along a subsequence. By Fatou's lemma,
\begin{align*}
\|I_1\big(\phi 1_{B \times \Gamma}\big)\|_p & \leq \liminf_{n \to \infty}\|M_{t_n}\|_p \\
& \leq \cB_p \left\{ \left( \int_{B \times \Gamma} |\phi(x,z)|^2 dx\nu(dz) \right)^{1/2}+ \left( \int_{B \times \Gamma} |\phi(x,z)|^p dx\nu(dz) \right)^{1/p} \right\}
\end{align*}
This proves \eqref{mom-I2}.

\medskip

{\em Step 2.} In this step, we prove \eqref{mom-I1}. For this, we consider a sequence $(B_k)_{k\geq 1}$ of sets in $\cB_b(\bR^d)$ such that $B_k \uparrow \bR^d$,
and a sequence $(\Gamma_k)_{k\geq 1}$ of sets in $\cB_b(\bR_0)$ such that $\Gamma_k \uparrow \bR_0$.

By {\em Step 1}, we know that for any $k\geq 1$,
\[
\|I_1(\phi 1_{B_k \times \Gamma_k})\|_p \leq \cB_p (\|\phi 1_{B_k \times \Gamma_k}\|_{\cH}+\|\phi 1_{B_k \times \Gamma_k}\|_{L^p({\bf Z})}).
\]
The conclusion follows letting $k \to \infty$, using Fatou's lemma on the left-hand side.

\end{proof}

\begin{corollary}
\label{cor-p-mom-L}
If $p\geq 2$ is such that $m_p<\infty$, then for any function $\varphi \in L^2(\bR^d)$,
\begin{equation}
\label{mom-L1}
\|L(\varphi)\|_p \leq \cB_p \Big( m_2^{1/2}\|\varphi\|_{L^2(\bR^d)}+ m_p^{1/p} \|\varphi\|_{L^p(\bR^d)}\Big) \leq \cC_p \Big( \|\varphi\|_{L^2(\bR^d)}+  \|\varphi\|_{L^p(\bR^d)}\Big),
\end{equation}
where $\cB_p$ is the constant from Theorem \ref{rosenthal} and $\cC_p=\cB_p (m_2^{1/2} \vee m_p^{1/p})$.
\end{corollary}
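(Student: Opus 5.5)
The plan is to apply Theorem \ref{ros-I1} to the integrand $\phi(x,z)=\varphi(x)z$, since by definition
\[
L(\varphi)=\int_{\bR^d\times\bR_0}\varphi(x)z\,\widehat{N}(dx,dz)=I_1(\phi).
\]
First I will check that $\phi\in\cH$. By Fubini,
\[
\|\phi\|_{\cH}^2=\int_{\bR_0}\int_{\bR^d}|\varphi(x)|^2|z|^2\,dx\,\nu(dz)=m_2\|\varphi\|_{L^2(\bR^d)}^2,
\]
which is finite because $m_2<\infty$ and $\varphi\in L^2(\bR^d)$. Hence $\|\phi\|_{\cH}=m_2^{1/2}\|\varphi\|_{L^2(\bR^d)}$.

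Next I will compute the $L^p({\bf Z})$ norm in the same way, using the product structure of $\fm={\rm Leb}\times\nu$:
\[
\|\phi\|_{L^p({\bf Z})}^p=\int_{\bR_0}|z|^p\,\nu(dz)\int_{\bR^d}|\varphi(x)|^p\,dx=m_p\|\varphi\|_{L^p(\bR^d)}^p.
\]
This gives $\|\phi\|_{L^p({\bf Z})}=m_p^{1/p}\|\varphi\|_{L^p(\bR^d)}$. If $\varphi\notin L^p(\bR^d)$, the right-hand side of \eqref{mom-L1} is infinite and there is nothing to prove. Plugging both norms into \eqref{mom-I1} yields the first inequality in \eqref{mom-L1}.

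The second inequality is immediate: bound each of the constants $m_2^{1/2}$ and $m_p^{1/p}$ by $m_2^{1/2}\vee m_p^{1/p}$ and factor it out, which gives $\cC_p=\cB_p(m_2^{1/2}\vee m_p^{1/p})$. None of these steps is a real obstacle. The only point needing care is the identification $L(\varphi)=I_1(\varphi\otimes z)$ for general $\varphi\in L^2(\bR^d)$, not just indicators. This holds because both sides are $L^2(\Omega)$-continuous linear maps in $\varphi$, by the two isometries $\bE|L(\varphi)|^2=m_2\|\varphi\|^2_{L^2}=\|\phi\|^2_{\cH}$, and they agree on indicators $1_A$, which span a dense subspace.
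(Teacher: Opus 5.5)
Your proposal is correct and is the paper's proof: apply Theorem \ref{ros-I1} to $\phi(x,z)=\varphi(x)z$. Your explicit computations $\|\phi\|_{\cH}=m_2^{1/2}\|\varphi\|_{L^2(\bR^d)}$ and $\|\phi\|_{L^p({\bf Z})}=m_p^{1/p}\|\varphi\|_{L^p(\bR^d)}$, and the factoring out of $m_2^{1/2}\vee m_p^{1/p}$, fill in details the paper leaves implicit.
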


\begin{proof}
We apply Theorem \ref{ros-I1} to the function $\phi(x,z)=\varphi(x)z$.
\end{proof}


\begin{remark}
{\rm Applying \eqref{mom-L1} to $\varphi=1_A$ for $A \in \cB_b(\bR^d)$, we infer that for any $p\geq 2$ such that $m_p<\infty$,
\[
\|L(A)\|_p \leq \cB_p\Big\{\big(m_2|A|\big)^{1/2}+\big(m_p|A|\big)^{1/p}\Big\}.
\]
The same argument shows that if $\{Y(t)\}_{t\geq 0}$ is a classical L\'evy process with characteristic function
\[
\bE[e^{i\theta Y(t)}]=\exp\left\{t\int_{\bR_0}\big(e^{i \theta z}-1-i\theta z \big) \nu(dz)\right\}, \quad \theta \in \bR, 
\]
then for any $p\geq 2$ such that $m_p<\infty$,
\begin{equation}
\label{classic}
\|Y(t)\|_p \leq \cB_p\Big\{\big(m_2t\big)^{1/2}+\big(m_p t \big)^{1/p}\Big\}.
\end{equation}
Formula \eqref{classic} may be well-known, but we could not find a reference for it.
}
\end{remark}

\section{Linear equation}
\label{section-lin}

In this section we study the linear equation
\begin{equation}
\label{lin-eq}
\cL v(t,x)=\dot{X}(x) \quad t>0,\ x\in \bR^d,
\end{equation}
where $\cL$ is a second-order partial differential operator with coefficients that do not depend on the space variable. We denote by $G_t$ the fundamental solution of the equation $\cL u=0$.  

We recall that $\cO_C'(\bR^d)$ is the space of distributions with rapid decrease. The definition and key properties of such distributions are recalled in Appendix \ref{app-distr}. We will work under the following assumption.

\medskip

{\bf Hypothesis A.} For any $t>0$, $G_t \in \cO_C'(\bR^d)$.

\medskip

We will be interested in the following two examples:

\begin{example}[Heat equation] If $\cL=\frac{\partial}{\partial t}-\frac{1}{2}\Delta$ is the heat operator, 
then
\[
G_t(x)=\frac{1}{(2\pi t)^{d/2}} e^{-\frac{|x|^2}{2t}} \quad \mbox{and} \quad \cF G_t(\xi)=e^{-\frac{t|\xi|^2}{2}}.
\]
In this case, $G_t\in \cS(\bR^d)$ and Hypothesis A holds by Lemma \ref{SO-lem}.
\end{example}

\begin{example}[Wave equation] If $\cL=\frac{\partial^2}{\partial t^2}-\Delta$ is the wave operator, then
\[
G_t(x)= \frac{1}{2}1_{\{|x| <t\}} \quad \mbox{if \  $d=1$}, \quad 
G_t(x)=\frac{1}{2\pi}\frac{1}{\sqrt{t^2-|x|^2}}1_{\{|x|<t\}} \quad \mbox{if \ $d=2$},
\]
and
\[
G_t=\frac{1}{4\pi t}\sigma_t \quad \mbox{if \ $d=3$},
\]
where $\sigma_t$ is the surface measure on the sphere $\{x \in \bR^3; |x|=t\}$.

If $d \geq 4$ is even, $G_t$ is a distribution with compact support in $\bR^d$ given by: (see e.g. relation (38) on p. 80 of \cite{evans95})
$$G_t= \frac{1}{2 \cdot 4 \cdot \ldots \cdot n} \left(\frac{1}{t}\frac{\partial}{\partial t}
\right)^{\frac{d-2}{2}}(t^d\Upsilon_t), \quad (\Upsilon_t,\varphi)=\frac{1}{\alpha_{d}}\int_{B(0,1)}
\frac{\varphi(tz)}{\sqrt{t^2-|tz|^2}} dz,$$
where $\alpha_{d}=\frac{2\pi^{d/2}}{d\Gamma(d/2)}$ is the volume of the unit ball $B(0,1)$ in $\bR^d$. If
$d \geq 5$ is odd, $G_t$ is a distribution with compact support in $\bR^d$ given by: (see e.g. relation (31) on p. 77 of \cite{evans95})
$$G_t= \frac{1}{1 \cdot 3 \cdot 5 \ldots \cdot (d-2)}\left(\frac{1}{t}\frac{\partial}{\partial t}
 \right)^{\frac{d-3}{2}} (t^{d-2}\Sigma_t), \quad (\Sigma_t,\varphi)=\frac{1}{\omega_d}\int_{\partial B(0,1)}\varphi(tz)d\sigma(z),$$
where $\omega_d=d\alpha_d$ is the surface area of the unit sphere $\partial B(0,1)$ in $\bR^d$, and $\sigma$ is the surface measure on $\partial B(0,1)$.

It is known that
\[
\cF G_t(\xi)=\frac{\sin(t|\xi|)}{|\xi|} \quad \mbox{for any $d\geq 1$}.
\]

Hypothesis A holds since the space $\cE'(\bR^d)$ of distributions with compact support is included in $\cO_{C}'(\bR^d)$. (If $d\le 3$, we can view $G_t$ as a distribution. This distribution has compact support, since $G_t$ has compact support as a function, or as a measure.)

\end{example}

\subsection{Random field solution}

In this section, we show that under some conditions, equation \eqref{lin-eq} has a (unique) solution. Then, we show that these conditions are verified for the stochastic heat and wave equation.

\begin{definition}
{\rm
The process $\{v(t,x);t\geq 0,x\in \bR^d\}$ defined by
\begin{equation}
\label{defi-v}
v(t,x)=\int_0^t \int_{\bR^d}G_{t-s}(x-y) X(dy)ds
\end{equation}
is called a {\em random-field solution} of equation \eqref{lin-eq}, provided that the integral is well-defined. }
\end{definition}

To prove the existence of the solution, we use the following result. 
\begin{lemma}
\label{S-lem}
Let $S \in \cO_{C}'(\bR^d)$ be such that $\int_{\bR^d}|\cF S(\xi)|^2 \mu(d\xi)<\infty$. Then $S \in \overline{\cP}_{d}(\bR^d)$, $S * \k \in L^2(\bR^d)$ and 
\begin{equation}
\label{XL-int}
\int_{\bR^d}S(x)X(dx)=\int_{\bR^d}(S*\k)(x)L(dx) \quad \mbox{in $L^2(\Omega)$.}
\end{equation}
\end{lemma}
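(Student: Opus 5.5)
The plan is to work in three steps. First, get $S \in \overline{\cP}_d(\bR^d)$. Then show $S*\k$ is an $L^2$ function with $\cF(S*\k)=\cF S\,\cF\k$. Finally, prove \eqref{XL-int} by approximating $S$ with test functions.

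\emph{Membership in $\overline{\cP}_d(\bR^d)$.} Since $S\in\cO_C'(\bR^d)$, its Fourier transform $\cF S$ is a $C^\infty$ function of slow growth; this is a standard property recalled in Appendix \ref{app-distr}. Together with the hypothesis $\int|\cF S|^2\,d\mu<\infty$, this gives $S\in\overline{\cP}_d(\bR^d)\subset\cP_{0,d}(\bR^d)$ by Theorem 3.5.(1) of \cite{BGP12}. In particular the left side of \eqref{XL-int} is well defined as an $L^2(\Omega)$ limit.

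\emph{The function $S*\k$.} Because $S\in\cO_C'$ and $\k$ is tempered, the convolution $S*\k$ is defined in $\cS'(\bR^d)$, and the exchange formula gives $\cF(S*\k)=\cF S\cdot\cF\k=\cF S\cdot h$. The right side is a function, and by \eqref{def-mu}
\[
\int_{\bR^d}|\cF S(\xi)h(\xi)|^2\,d\xi=(2\pi)^d\int_{\bR^d}|\cF S(\xi)|^2\mu(d\xi)<\infty .
\]
So $\cF(S*\k)\in L^2_{\bC}(\bR^d)$, and Plancherel in $\cS'$ yields $S*\k\in L^2(\bR^d)$.

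\emph{The identity \eqref{XL-int}.} I would approximate $S$ in the $\langle\cdot,\cdot\rangle_0$ norm by $\varphi_n\in\cS(\bR^d)$. A natural choice is $\varphi_n=(S*\rho_n)\theta_n$, with $\rho_n$ an approximate identity and $\theta_n$ a smooth cutoff; this lies in $\cS$ because $S*\rho_n\in\cO_M$ belongs to the right class. Alternatively, one can use the density argument already contained in the proof of Theorem 3.5 of \cite{BGP12}, which produces $\varphi_n\in\cS$ with $\int|\cF\varphi_n-\cF S|^2\,d\mu\to0$. For each $n$ we have $X(\varphi_n)=L(\varphi_n*\k)$ by definition. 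By isometry, $X(\varphi_n)\to X(S)$ in $L^2(\Omega)$. On the other side, Lemma 2.7 of \cite{BJ26} gives $\cF(\varphi_n*\k)=\cF\varphi_n\,h$, so
\[
\bE|L(\varphi_n*\k)-L(S*\k)|^2=m_2\|\varphi_n*\k-S*\k\|_{L^2}^2=m_2\int|\cF\varphi_n-\cF S|^2\,d\mu\to0 .
\]
Both sides of \eqref{XL-int} are therefore limits of the same sequence, and they agree.

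The main obstacle is the second step: justifying rigorously that $S*\k$ is a tempered distribution with $\cF(S*\k)=\cF S\,\cF\k$ when $\k$ is only a tempered function, possibly non-integrable as in the Riesz case. I would rely on the theory of convolution $\cO_C'*\cS'\to\cS'$ from Appendix \ref{app-distr}, and on the exchange formula $\cF(S*T)=\cF S\,\cF T$ for $S\in\cO_C'$, $T\in\cS'$.
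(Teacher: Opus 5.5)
Your proposal is correct and follows the paper's proof step for step. Membership $S\in\overline{\cP}_d(\bR^d)$ comes from $\cF S\in\cO_M$. Theorems \ref{GF1}--\ref{GF2} give $\cF(S*\k)=\cF S\,\cF\k\in L^2_{\bC}(\bR^d)$, hence $S*\k\in L^2(\bR^d)$ (the paper cites Lemma A.1 of \cite{BJ26} for this Plancherel step). Finally, \eqref{XL-int} follows by taking $\varphi_n$ with $\|\varphi_n-S\|_0\to0$ and passing to the limit in $X(\varphi_n)=L(\varphi_n*\k)$ through both isometries.

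The only difference is cosmetic: the paper takes $\varphi_n\in\cD(\bR^d)$ directly from $S\in\cP_{0,d}(\bR^d)$, so no explicit construction is needed. If you want one anyway, $S*\rho_n$ already lies in $\cS(\bR^d)$ by property (b) of the appendix and converges by dominated convergence, so the cutoff $\theta_n$ is superfluous.
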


\begin{proof} The fact that $S \in \overline{\cP}_{d}$ is clear. Since $k$ induces a tempered distribution, $S*\k \in \cS'(\bR^d)$ (by Theorem \ref{GF1}) and $\cF(S*\k)=\cF S \cF \k$ (by Theorem \ref{GF2}). Note that $\cF(S*\k)\in L^2_{\bC}(\bR^d)$, since
\[
\int_{\bR^d}|\cF(S*\k)(\xi)|^2 d\xi=\int_{\bR^d}|\cF S(\xi)|^2 |\cF \k(\xi)|^2 d\xi=(2\pi)^d
\int_{\bR^d}|\cF S(\xi)|^2 \mu(d\xi)<\infty.
\]
By Lemma A.1 of \cite{BJ26}, it follows that $S*\k \in L^2(\bR^d)$.

It remains to prove \eqref{XL-int}. For this, we proceed by approximation. Since $S \in \cP_{0,d}(\bR^d)$, there exists a sequence $(\varphi_n)_{n\geq 1}$ in $\cD(\bR^d)$ such that $\|\varphi_n-S\|_0 \to 0$ as $n \to \infty$. Hence, 
\[
X(\varphi_n) \to X(S) \quad \mbox{in $L^2(\Omega)$}.
\]
On the other hand, $X(\varphi_n)=L(\varphi_n*\k) \to L(S *\k)$ in $L^2(\Omega)$, since
$\varphi_n *\k \to S *\k$ in $L^2(\bR^d)$: by Plancherel theorem,
\begin{align*}
\int_{\bR^d}|(\varphi_n* \k)(x) -(S* \k)(x)|^2dx&=\frac{1}{(2\pi)^d} \int_{\bR^d} |\cF \varphi_n(\xi)-\cF S(\xi)|^2 |\cF \k(\xi)|^2 d\xi \\
&= \|\varphi_n-S\|_0^2 \to 0 \quad \mbox{as $n\to \infty$}.
\end{align*}
Hence, $X(S)=L(S*\k)$ in $L^2(\Omega)$.
\end{proof}

We introduce the following hypotheses.

\medskip

\noindent {\bf Hypothesis B.} For any $t>0$, $\int_{\bR^d} |\cF G_t(\xi)|^2 \mu(d\xi)<\infty$. 

\medskip

\noindent {\bf Hypothesis C.}
For any $t>0$ and for any sequence $t_n \to t$, 
\[
\int_{\bR^d}|\cF G_{t_n}(\xi)-\cF G_t(\xi)|^2 \mu(d\xi) \to 0.
\]

\begin{remark}
\label{rem-HypB}
{\rm To verify Hypothesis C, it suffices to show that:\\
(i) the function $t \mapsto \cF G_t(\xi)$ is continuous on $\bR_{+}$, for any $\xi\in \bR^d$;\\
(ii) for any $t>0$, there exists $\e_t>0$ and a measurable function $k_t:\bR^d \to \bR_{+}$ with the property that $\int_{\bR^d}|k_t(\xi)|^2 \mu(d\xi)<\infty$, such that for any $h \in [-\e_t,\e_t]$ and $\xi \in \bR^d$,
\[
|\cF G_{t+h}(\xi)-\cF G_t(\xi)| \leq k_t(\xi).
\]
Then, apply the dominated convergence theorem.}
\end{remark}

\medskip

Under Hypothesis B, for $(t,x) \in \bR_{+} \times \bR^d$ fixed, we apply Lemma \ref{S-lem} to the distribution
$S(s)=G_{t-s}(x-\cdot)1_{[0,t]}(s) \in \cO_{C}'(\bR^d)$,
 since $\cF G_{t-s}(x-\cdot)(\xi)=e^{-i \xi \cdot x} \overline{\cF G_{t-s}(\xi)}$, and
\[
\int_{\bR^d} |\cF G_{t-s}(x-\cdot)(\xi)|^2 \mu(d\xi)<\infty.
\]
It follows that  for any $0\leq s\leq t$ and $x \in \bR^d$, $G_{t-s}(x-\cdot) \in \overline{\cP}_d(\bR^d)$, $G_{t-s}(x-\cdot)*\k \in L^2(\bR^d)$, and 
\begin{equation}
\label{def-Y}
Y^{(t,x)}(s):=\int_{\bR^d} G_{t-s}(x-y)X(dy)=\int_{\bR^d} \big(G_{t-s}(x-\cdot)*\k \big)(y)L(dy)=:Z^{(t,x)}(s) \quad \ \mbox{in $L^2(\Omega)$}.
\end{equation}
To study the joint measurability in $(\omega,s)$ of $Y^{(t,x)}$, we introduce another hypothesis:

\medskip

\noindent {\bf Hypothesis D.} The map $(t,\xi)\mapsto \cF G_t(\xi)$ is measurable on $\bR_{+} \times \bR^d$, and  
\[
\int_0^T \int_{\bR^d} |\cF G_t(\xi)|^2 \mu(d\xi)dt<\infty \quad \mbox{for any $T>0$}.
\]

\begin{lemma}
Under Hypotheses A to D, for any $(t,x)\in \bR_{+} \times \bR^d$,  $\{Y^{(t,x)}(s)\}_{s \in [0,t]}$ and $\{Z^{(t,x)}(s)\}_{s \in [0,t]}$ have jointly measurable modifications denoted by $\{\widetilde{Y}^{(t,x)}(s)\}_{s \in [0,t]}$, respectively $\{\widetilde{Z}^{(t,x)}(s)\}_{s \in [0,t]}$, and with probability 1,
\[
\int_0^t |\widetilde{Y}^{(t,x)}(s)|ds<\infty \quad \mbox{and} \quad \int_0^t |\widetilde{Z}^{(t,x)}(s)|ds<\infty.
\]
Moreover, for any $(t,x)\in \bR_{+} \times \bR^d$,
\[
v(t,x):=\int_0^t \widetilde{Y}^{(t,x)}(s)ds=\int_0^t \widetilde{Z}^{(t,x)}(s)ds \quad \mbox{in $L^2(\Omega)$}.
\]
The process $\{v(t,x); t\geq 0,x\in \bR^d\}$ is the random field solution of equation \eqref{lin-eq}.
\end{lemma}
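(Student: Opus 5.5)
The plan is to obtain joint measurability from continuity in $L^2(\Omega)$, and then to pass the $L^2$-identity $Y^{(t,x)}(s)=Z^{(t,x)}(s)$ through the time integral.

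\emph{Continuity in $L^2(\Omega)$.} Fix $(t,x)$. By \eqref{def-Y} and the isometry for $X$ on $\cP_{0,d}(\bR^d)$, for $s,r\in[0,t)$ we have
\[
\bE|Y^{(t,x)}(s)-Y^{(t,x)}(r)|^2=m_2\int_{\bR^d}|\cF G_{t-s}(\xi)-\cF G_{t-r}(\xi)|^2\mu(d\xi).
\]
Here we use that $\cF G_{t-s}(x-\cdot)(\xi)=e^{-i\xi\cdot x}\overline{\cF G_{t-s}(\xi)}$ and that $X$ is linear on $\overline{\cP}_d(\bR^d)$. Hypothesis C makes the right-hand side tend to $0$ as $r\to s$, so $s\mapsto Y^{(t,x)}(s)$ is continuous in $L^2(\Omega)$ on $[0,t)$.

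The endpoint $s=t$, where $G_0$ may be degenerate, is a single point and therefore Lebesgue-null. It does not affect the time integral, and we may set the modification there arbitrarily, for instance equal to $Y^{(t,x)}(t)$ if it is defined.

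\emph{Measurable modifications.} A process that is continuous in probability on an interval admits a jointly measurable modification. This is the standard result: approximate by the step processes $Y_n(s)=Y(k2^{-n}t)$ on dyadic intervals, extract an a.s. convergent subsequence via Borel--Cantelli, and define $\widetilde Y:=\limsup_n Y_n$ where the limit exists. Because $Z^{(t,x)}(s)=Y^{(t,x)}(s)$ in $L^2(\Omega)$ for each $s$, the process $Z^{(t,x)}$ is also $L^2$-continuous, and the same construction gives $\widetilde Z^{(t,x)}$. Moreover, $\widetilde Y^{(t,x)}(s)=\widetilde Z^{(t,x)}(s)$ a.s. for every $s$.

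\emph{Integrability.} By Fubini (justified by joint measurability) and Cauchy--Schwarz,
\[
\bE\int_0^t|\widetilde Y^{(t,x)}(s)|ds\le \int_0^t\big(\bE|Y^{(t,x)}(s)|^2\big)^{1/2}ds\le \sqrt{t}\Big(m_2\int_0^t\int_{\bR^d}|\cF G_{r}(\xi)|^2\mu(d\xi)dr\Big)^{1/2}<\infty.
\]
The last integral is finite by Hypothesis D, which also supplies the measurability in $(r,\xi)$ needed for this computation. Hence $\int_0^t|\widetilde Y^{(t,x)}(s)|ds<\infty$ a.s., and likewise for $\widetilde Z^{(t,x)}$.

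\emph{Equality of the integrals.} The random variable $\int_0^t\widetilde Y^{(t,x)}(s)ds$ lies in $L^2(\Omega)$, since Minkowski's integral inequality bounds its $L^2$-norm by the finite quantity above. Applying Fubini to $\bE\int_0^t|\widetilde Y-\widetilde Z|\,ds=0$ shows that $\widetilde Y^{(t,x)}(s)=\widetilde Z^{(t,x)}(s)$ for $\bP\times ds$-a.e. $(\omega,s)$. The two integrals therefore agree a.s., and hence in $L^2(\Omega)$.

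This shows that $v(t,x)$ in \eqref{defi-v} is well defined, so $v$ is the random field solution. I expect the main obstacle to be the measurable-modification step, specifically handling it cleanly near $s=t$ where continuity may fail. If Hypothesis C does not directly cover that endpoint, I will treat it separately as a null set.
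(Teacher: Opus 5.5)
Your proposal is correct and follows the paper's proof essentially step for step: $L^2(\Omega)$-continuity of $Y^{(t,x)}$ and $Z^{(t,x)}$ from Hypothesis C, a jointly measurable modification from the standard result for processes continuous in probability, integrability in $s$ via Fubini and Hypothesis D, and equality of the two time integrals because $\widetilde{Y}^{(t,x)}(s)=\widetilde{Z}^{(t,x)}(s)$ a.s.\ for each $s$. The only differences are immaterial: the paper bounds $\bE\int_0^t|\widetilde{Y}^{(t,x)}(s)|^2ds$ and uses Cauchy--Schwarz, where you use an $L^1$ bound plus Minkowski. Your separate treatment of the endpoint $s=t$ is a small but genuine improvement, since Hypotheses B and C concern only positive times and the paper's continuity claim on all of $[0,t]$ glosses over $G_0$; building the modification on each $[0,t-1/k]$, where continuity in probability is uniform, and patching makes this precise.
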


\begin{proof}
By Hypothesis C, the processes $\{Y^{(t,x)}(s)\}_{s \in [0,t]}$ and $\{Z^{(t,x)}(s)\}_{s \in [0,t]}$  are continuous in $L^2(\Omega)$: if $s_n \to s \in [0,t]$, then
\begin{align*}
\bE|Y^{(t,x)}(s_n)-Y^{(t,x)}(s)|^2 &= 
\bE|Z^{(t,x)}(s_n)-Z^{(t,x)}(s)|^2 \\
&=m_2\int_{\bR^d}|\cF G_{t-s_n}(\xi)-\cF G_{t-s}(\xi)|^2 \mu(d\xi) \to 0 \quad \mbox{as $n \to \infty$}.
\end{align*}
 
By Proposition 3.21 of \cite{PZ07}, these processes have jointly measurable modifications denoted by $\{\widetilde{Y}^{(t,x)}(s)\}_{s \in [0,t]}$, respectively $\{\widetilde{Z}^{(t,x)}(s)\}_{s \in [0,t]}$. This means that for any $s \in [0,t]$,
\[
\widetilde{Y}^{(t,x)}(s)=Y^{(t,x)}(s)=Z^{(t,x)}(s)=\widetilde{Z}^{(t,x)}(s) \quad \mbox{a.s.}
\]

By Fubini theorem and Hypothesis D,
\[
\bE\int_0^t |\widetilde{Y}^{(t,x)}(s)|^2 ds=\int_0^t \bE|\widetilde{Y}^{(t,x)}(s)|^2 ds=m_2\int_0^t \int_{\bR^d}|\cF G_{t-s}(\xi)|^2 \mu(d\xi) ds <\infty.
\]
Hence, $\bE\int_0^t |\widetilde{Y}^{(t,x)}(s)|ds<\infty$, which implies that $\int_0^t |\widetilde{Y}^{(t,x)}(s)|ds<\infty$ a.s. Similarly, $\int_0^t |\widetilde{Z}^{(t,x)}(s)|ds<\infty$ a.s. 
Finally, by Cauchy-Schwarz inequality and Fubini's theorem,
\[
\bE
\left|\int_0^t \widetilde{Y}^{(t,x)}(s)ds- \int_0^t \widetilde{Z}^{(t,x)}(s)ds \right|^2 \leq t  \int_0^t 
\bE| \widetilde{Y}^{(t,x)}(s)ds-  \widetilde{Z}^{(t,x)}(s)|^2 ds=0
\]

\end{proof}

We will write
\begin{equation}
\label{def-v}
v(t,x)=\int_0^t \int_{\bR^d}G_{t-s}(x-y)X(dy)ds=\int_0^t \int_{\bR^d}\big( G_{t-s}(x-\cdot)*\k \big)(y) L(dy)ds,
\end{equation}
with the convention that the integrands $Y^{(t,x)}(s)$ and $Z^{(t,x)}(s)$ of the two $ds$ integrals are interpreted as $\widetilde{Y}^{(t,x)}(s)$, respectively $\widetilde{Z}^{(t,x)}(s)$.

\medskip

To verify that Hypotheses B, C and D hold for the fundamental solutions of the heat and wave equations,
we introduce {\em Dalang's condition}:
\[
(D) \quad \int_{\bR^d} \frac{1}{1+|\xi|^2}\mu(d\xi)<\infty.
\]

\begin{remark}
{\rm
a) If $k=R_{d,\alpha/2}$ for some $\alpha \in (0,d)$, then $f=R_{d,\alpha}$, $\mu(d\xi)=\frac{1}{(2\pi)^{d}} |\xi|^{-\alpha}d\xi$, and (D) holds if and only if
$\alpha \in (d-2,d)$.\\
b) If $k=B_{d,\alpha/2}$ for some $\alpha >0$, then $f=B_{d,\alpha}$, $\mu(d\xi)=\frac{1}{(2\pi)^{d}} (1+|\xi|^{2})^{-\alpha/2}d\xi$, and (D) holds if and only if
$\alpha >d-2$.\\
c) If $k=H_{d,\alpha/2}$ for some $\alpha >0$, then $f=H_{d,\alpha}$, $\mu(d\xi)=\frac{1}{(2\pi)^{d}} \exp(-\alpha|\xi|^2/2)d\xi$, and (D) holds for any $\alpha >0$.
}
\end{remark}

\begin{lemma}
\label{lem-add}
 Hypotheses B, C and D hold for the fundamental solutions of the heat and wave equations, for any $d\geq 1$, provided that (D) holds. In this case, the linear stochastic heat equation:
\begin{equation}
\label{heat-add}
\frac{\partial v}{\partial t}(t,x)=\frac{1}{2}\Delta v(t,x)+\dot{X}(x) \quad t>0,x\in \bR^d,
\end{equation}
with zero initial conditions has a random field solution, in any dimension $d\geq 1$. The same thing is true for the linear  stochastic wave equation:
\begin{equation}
\label{wave-add}
\frac{\partial^2 v}{\partial t^2}(t,x)=\Delta v(t,x)+\dot{X}(x) \quad t>0,x\in \bR^d,
\end{equation}
with zero initial conditions.
\end{lemma}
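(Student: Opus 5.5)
The plan is to reduce everything to the explicit Fourier transforms $\cF G_t(\xi)=e^{-t|\xi|^2/2}$ for the heat kernel and $\cF G_t(\xi)=\sin(t|\xi|)/|\xi|$ for the wave kernel. Each of Hypotheses B, C and D then becomes a bound of the form
\[
|\cF G_t(\xi)|^2\leq C_T\,\frac{1}{1+|\xi|^2}
\]
uniformly for $t\in[0,T]$, combined with Dalang's condition (D). Hypothesis A has already been checked in the examples, so once B, C and D are verified, the preceding lemma gives existence of the random field solution of \eqref{heat-add} and \eqref{wave-add}.

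\emph{Heat case.} I first note that $e^{-t|\xi|^2}\leq C_t/(1+|\xi|^2)$ with $C_t=\max(1,1/t)$. This follows from $x e^{-x}\leq 1$ applied to $x=t|\xi|^2$, together with $e^{-t|\xi|^2}\leq 1$ for $|\xi|\leq 1$. Hence Hypothesis B follows from (D).

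For Hypothesis D, measurability is clear since the map is continuous. For the integral I use Fubini:
\[
\int_0^T e^{-t|\xi|^2}\,dt=\frac{1-e^{-T|\xi|^2}}{|\xi|^2}\leq \frac{C_T}{1+|\xi|^2},
\]
where $C_T=\max(T,1)$ up to constants, splitting into the cases $|\xi|\leq 1$ and $|\xi|>1$. So D also follows from (D).

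For Hypothesis C I use Remark \ref{rem-HypB}. Pointwise continuity in $t$ is obvious. For $|h|\leq t/2$, I dominate
\[
|\cF G_{t+h}(\xi)-\cF G_t(\xi)|\leq 2e^{-t|\xi|^2/4}=:k_t(\xi),
\]
and $k_t^2$ is $\mu$-integrable by the same argument as for B.

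\emph{Wave case.} The key elementary estimate is
\[
\frac{\sin^2(t|\xi|)}{|\xi|^2}\leq \frac{2(1+t^2)}{1+|\xi|^2}.
\]
For $|\xi|\leq 1$ I use $|\sin x|\leq x$, which bounds the left side by $t^2$. For $|\xi|>1$ I use $|\sin|\leq 1$, which bounds it by $1/|\xi|^2\leq 2/(1+|\xi|^2)$. Integrating over $t\in[0,T]$ gives Hypothesis D with constant $2T(1+T^2)$, and Hypothesis B follows directly; measurability is again immediate from continuity.

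For Hypothesis C, continuity in $t$ is clear. For $|h|\leq t$, I take $k_t(\xi)=2\sqrt{2(1+4t^2)}\,(1+|\xi|^2)^{-1/2}$, which dominates the difference because the estimate above is monotone in $t$. Its square is $\mu$-integrable by (D).

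I do not expect a serious obstacle. The only point needing care is that $\mu$ may have infinite total mass, as in the Riesz case, so every bound must decay like $(1+|\xi|^2)^{-1}$ rather than merely be bounded. This is why the case split near $\xi=0$ versus large $|\xi|$ is needed in each estimate.
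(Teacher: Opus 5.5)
Your proposal is correct and follows essentially the same route as the paper: the split $|\xi|\le 1$ versus $|\xi|>1$ combined with (D) for Hypothesis B, and Remark \ref{rem-HypB} with an explicit dominating function $k_t$ for Hypothesis C. As in the paper, existence then comes from the preceding lemma, with Hypothesis A taken from the examples. The differences are cosmetic. For Hypothesis D, you use the explicit integral $\int_0^T e^{-t|\xi|^2}dt=(1-e^{-T|\xi|^2})/|\xi|^2$ and a bound on $\sin^2(t|\xi|)/|\xi|^2$ that is uniform in $t\in[0,T]$, where the paper quotes Lemma 6.1 of \cite{sanz05}. In the wave case you bound the increment by the triangle inequality rather than the sum-to-product identity. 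This gives a $t$-dependent rather than $t$-uniform $k_t$, which is enough here.
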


\begin{proof}
i) We consider first the heat equation. In this case, $G_t \in \cS(\bR^d)$ for all $t>0$.
Hypothesis B holds since 
\begin{align*}
\int_{|\xi|\leq 1}e^{-t|\xi|^2}\mu(d\xi) & \leq \int_{|\xi|\leq 1}\mu(d\xi)\leq 2\int_{|\xi|\leq 1} \frac{1}{1+|\xi|^2}\mu(d\xi), \\
\int_{|\xi|> 1}e^{-t|\xi|^2}\mu(d\xi) & \leq \int_{|\xi|> 1}\frac{1}{1+t|\xi|^2}\mu(d\xi) \leq \frac{1}{t\wedge 1}\int_{|\xi|> 1} \frac{1}{1+|\xi|^2}\mu(d\xi).
\end{align*}

To show that Hypothesis C holds, we use Remark \ref{rem-HypB}. 
Clearly, $t \mapsto e^{-t|\xi|^2/2}$ is continuous on $\bR_{+}$. To check (ii),
we fix $t>0$ and we let $h \in [-\e_t,\e_t]$, for some $\e_t>0$ (to be chosen later). If $h \in [0,\e_t]$, then
\begin{align*}
|\cF G_{t+h}(\xi)-\cF G_t(\xi)|=e^{-\frac{t|\xi|^2}{2}} \big(1-e^{-\frac{h|\xi|^2}{2}} \big)\leq e^{-\frac{t|\xi|^2}{2}},
\end{align*}
whereas if $h \in [-\e_t,0]$,
\begin{align*}
|\cF G_{t+h}(\xi)-\cF G_t(\xi)|=e^{-\frac{t|\xi|^2}{2}} \big(e^{-\frac{h|\xi|^2}{2}} -1\big)\leq e^{-\frac{t|\xi|^2}{2}}\big(e^{\frac{\e_t|\xi|^2}{2}} -1\big)\leq e^{-\frac{(t-\e_t)|\xi|^2}{2}}.
\end{align*}
Therefore, it suffices to choose $\e_t=\frac{t}{2}$ and $k_t(\xi)=e^{-\frac{t|\xi|^2}{4}}$. Finally, Hypothesis D holds, since there exist some constants $C_T^{(1)},C_{T}^{(2)}>0$ such that: (see e.g. Lemma 6.1 of \cite{sanz05})
\[
C_{T}^{(1)} \frac{1}{1+|\xi|^2} \leq \int_0^T e^{-t|\xi|^2}dt \leq C_T^{(2)}\frac{1}{1+|\xi|^2}.
\]

ii) We consider next the wave equation. In this case, $G_t \in \cE'(\bR^d)$ for all $t>0$.
Hypothesis B holds since
\begin{align*}
\int_{|\xi|\leq 1}\frac{\sin^2(t|\xi|)}{|\xi|^2}\mu(d\xi) & \leq t^2 \int_{|\xi|\leq 1}\mu(d\xi)\leq 2t^2 \int_{|\xi|\leq 1} \frac{1}{1+|\xi|^2}\mu(d\xi), \\
\int_{|\xi|> 1} \frac{\sin^2(t|\xi|)}{|\xi|^2} \mu(d\xi) & \leq \int_{|\xi|> 1}\frac{1}{|\xi|^2}\mu(d\xi) \leq 2\int_{|\xi|> 1} \frac{1}{1+|\xi|^2}\mu(d\xi).
\end{align*}

To show that Hypothesis C holds, we use Remark \ref{rem-HypB}.
Clearly, $t \mapsto \frac{\sin(t|\xi|)}{|\xi|}$ is continuous on $\bR_{+}$. To check (ii), we fix $t>0$ and we let $h \in [-\e_t,\e_t]$, for some $\e_t>0$ (to be chosen later). Then
\begin{align*}
|\cF G_{t+h}(\xi)-\cF G_t(\xi)|& = \frac{\big|\sin\big((t+h)|\xi|\big) -\sin\big(t|\xi|\big)\big|}{|\xi|}=\frac{2 \big|\sin\big(\frac{h|\xi|}{2} \big) \cos \big( \frac{(2t+h)|\xi|}{2}\big)\big|}{|\xi|} \\
& \leq \frac{2\big|\sin\big(\frac{h|\xi|}{2} \big)\big|}{|\xi|} 
\end{align*}
We bound the last expression by $|h| \leq \e_t$ if $|\xi|\leq 1$, and by $\frac{2}{|\xi|}$ if $|\xi|>1$. Therefore, it suffices to choose $\e_t=1$ and $k_t(\xi):=1_{\{|\xi| \leq 1\}}+\frac{2}{|\xi|}1_{\{|\xi| > 1\}}$.
Finally,  Hypothesis D holds since there exist some constants $C_T^{(3)},C_{T}^{(4)}>0$ such that: (see e.g. Lemma 6.1 of \cite{sanz05})
\[
C_{T}^{(3)} \frac{1}{1+|\xi|^2} \leq \int_0^T \frac{\sin^2(t|\xi|)}{|\xi|^2} dt \leq C_T^{(4)}\frac{1}{1+|\xi|^2}.
\]
\end{proof}

We will use the following result. 
\begin{theorem}[Stochastic Fubini Theorem]
\label{Fubini} Let $\Phi :[0,T] \times \bR^d \to \bR$ be such that:
\\
a) $\Phi(t,\cdot) \in L^2(\bR^d)$ for any $t \in [0,T]$, and the map $t \mapsto \Phi(t,\cdot)$ is continuous in $L^2(\bR^d)$;\\
b)  $\Phi  \in L^2([0,T] \times \bR^d)$.

Then the process $\{Z(t)=\int_{\bR^d}\Phi(t,x)L(dx)\}_{t \in [0,T] }$ has a jointly measurable modification denoted by $\{\widetilde{Z}(t)\}_{t \in [0,T]}$, 
\[
\int_0^T |\widetilde{Z}(t)|dt<\infty \ \mbox{a.s.},  \quad \int_0^T|\Phi(t,x)|dt<\infty \ \mbox{for almost all $x\in \bR^d$,}
\]
 the function $x\mapsto \int_0^T\Phi(t,x)dt$ belongs to $L^2(\bR^d)$, and
\begin{equation}
\label{Fubini-eq}
\int_0^T \widetilde{Z}(t) dt=\int_{\bR^d} \left( \int_0^T\Phi(t,x)dt \right) L(dx) \quad \mbox{in $L^2(\Omega)$}.
\end{equation}
\end{theorem}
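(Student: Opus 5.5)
The plan is to mirror the proof of the preceding lemma for the measurability part, and then identify the two sides of \eqref{Fubini-eq} through their covariance with elementary integrals $L(\psi)$.

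\emph{Measurability and integrability.} By the isometry, $\bE|Z(t)-Z(s)|^2=m_2\|\Phi(t,\cdot)-\Phi(s,\cdot)\|_{L^2(\bR^d)}^2$. Hypothesis a) therefore makes $t\mapsto Z(t)$ continuous in $L^2(\Omega)$. Proposition 3.21 of \cite{PZ07} then gives a jointly measurable modification $\widetilde{Z}$. By Fubini,
\[
\bE\int_0^T|\widetilde Z(t)|^2dt=m_2\|\Phi\|_{L^2([0,T]\times\bR^d)}^2<\infty,
\]
using b), so $\int_0^T|\widetilde Z(t)|dt<\infty$ a.s.

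For the deterministic claims, I would take a jointly measurable version of $\Phi$, which is implicit in b). Cauchy--Schwarz gives $\int_{\bR^d}\big(\int_0^T|\Phi(t,x)|dt\big)^2dx\le T\|\Phi\|^2_{L^2}<\infty$. Hence $\int_0^T|\Phi(t,x)|dt<\infty$ for a.e. $x$, and $F(x):=\int_0^T\Phi(t,x)dt$ belongs to $L^2(\bR^d)$.

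\emph{Identification.} Set $U=\int_0^T\widetilde Z(t)dt$. It lies in $L^2(\Omega)$ because $\bE|U|^2\le T\,\bE\int_0^T|\widetilde Z|^2dt$. I would show $\bE|U-L(F)|^2=0$ by expanding the square and computing each term with Fubini and the isometry $\bE[L(\varphi)L(\psi)]=m_2\langle\varphi,\psi\rangle_{L^2}$:
\[
\bE[U^2]=\int_0^T\!\!\int_0^T\bE[\widetilde Z(t)\widetilde Z(s)]\,dsdt=m_2\int_0^T\!\!\int_0^T\langle\Phi(t,\cdot),\Phi(s,\cdot)\rangle\,dsdt=m_2\|F\|_{L^2}^2 ,
\]
\[
\bE[U\,L(F)]=\int_0^T\bE[\widetilde Z(t)L(F)]\,dt=m_2\int_0^T\langle\Phi(t,\cdot),F\rangle\,dt=m_2\|F\|^2_{L^2},
\]
and $\bE[L(F)^2]=m_2\|F\|^2$. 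Therefore $\bE|U-L(F)|^2=0$, which is \eqref{Fubini-eq}.

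\emph{Main obstacle.} The main difficulty is justifying the interchanges of $\bE$, $dt$ and $dx$. For $\bE[U^2]$, Fubini applies because $\bE\int\!\!\int|\widetilde Z(t)\widetilde Z(s)|\le T\,\bE\int|\widetilde Z|^2<\infty$. Similarly, $\bE\int_0^T|\widetilde Z(t)L(F)|dt<\infty$ by Cauchy--Schwarz. In the deterministic identities $\int\!\!\int\langle\Phi(t),\Phi(s)\rangle\,dsdt=\|F\|^2$ and $\int_0^T\langle\Phi(t),F\rangle\,dt=\|F\|^2$, the absolute integrand is bounded by $\big(\int_0^T|\Phi(t,x)|dt\big)^2$. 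This is integrable in $x$ by the estimate above, so classical Fubini applies. One must also use that $\widetilde Z(t)=Z(t)$ a.s. for each fixed $t$, so that the covariances of $\widetilde Z$ coincide with those of $Z$ given by the isometry.
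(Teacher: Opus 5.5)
Your proof is correct. Its first half follows the paper: you get continuity of $Z$ in $L^2(\Omega)$ from the isometry, a jointly measurable modification from Proposition 3.21 of \cite{PZ07}, the bound on $\bE\int_0^T|\widetilde Z(t)|^2dt$, and the Cauchy--Schwarz/H\"older estimate showing $x\mapsto\int_0^T\Phi(t,x)dt$ lies in $L^2(\bR^d)$.

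You differ from the paper in how you identify the two sides of \eqref{Fubini-eq}. The paper only says it is enough to treat $\Phi(t,x)=1_A(t)1_B(x)$. For such $\Phi$, $Z(t)=1_A(t)L(B)$ is already jointly measurable and both sides equal $|A|L(B)$. The linearity and density step that passes to general $\Phi$ is left implicit. You instead show that $\bE[U^2]$, $\bE[U\,L(F)]$ and $\bE[L(F)^2]$ all equal $m_2\|F\|_{L^2(\bR^d)}^2$, which gives $\bE|U-L(F)|^2=0$. Each interchange of $\bE$, $dt$ and $dx$ is justified by $\int_{\bR^d}\big(\int_0^T|\Phi(t,x)|dt\big)^2dx\le T\|\Phi\|^2_{L^2([0,T]\times\bR^d)}$, together with $\widetilde Z(t)=Z(t)$ a.s. for each fixed $t$.

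Your route is self-contained and more careful than the paper's. To make the paper's reduction rigorous, one would have to show that both sides depend continuously on $\Phi$ in $L^2([0,T]\times\bR^d)$. One would also have to give meaning to $\widetilde Z$ for approximants such as $1_A(t)1_B(x)$. These do not satisfy hypothesis a), so Proposition 3.21 of \cite{PZ07} no longer produces their modifications; these must be built another way, for instance through the isometry $\Phi\mapsto Z$ into $L^2(\Omega\times[0,T])$.

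The paper's approach is shorter and follows the familiar pattern of checking an identity on generators. Your covariance computation uses a) only to obtain the measurable modification. It is the same second-moment technique the paper itself uses later, in the sufficiency part of the theorem relating the distribution-valued and random field solutions.
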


\begin{proof} 
The process
$\{Z(t)\}_{t \in [0,T]}$ is continuous in $L^2(\Omega)$: if $t_n \to t \in [0,T]$, then
\[
\bE|Z(t_n)-Z(t)|^2=m_2 \| \Phi(t_n,\cdot)-\Phi(t,\cdot) \|_{L^2(\bR^d)}^2  \to 0 \quad \mbox{as $n \to \infty$}. 
\]
By Proposition 3.21 of \cite{PZ07}, $\{Z(t)\}_{t \in [0,T]}$ has a jointly measurable modification
$\{\widetilde{Z}(t)\}_{t \in [0,T]}$. By condition b),
\[
\bE \int_0^T |\widetilde{Z}(t)|^2 dt=\int_0^T \bE|Z(t)|^2 dt=m_2\int_0^T \int_{\bR^d}|\Phi(t,x)|^2 dxdt<\infty.
\]
Hence, $\bE \int_0^T |\widetilde{Z}(t)| dt<\infty$, which implies that $\int_0^T |\widetilde{Z}(t)| dt<\infty$ a.s.

\medskip

On the other hand, condition b) shows that the right-hand side of \eqref{Fubini-eq} is well-defined. The fact that $\int_{\bR^d}\int_0^T |\Phi(t,x)|^2 dxdt<\infty$ implies that $\int_0^T |\Phi(t,x)|^2 dt<\infty$ for almost all $x \in \bR^d$, and hence $\varphi(x):=\int_0^T\Phi(t,x)dt$ is well-defined for almost all $x \in \bR^d$. By H\"older's inequality,
$\int_{\bR^d}|\varphi(x)|^2 dx \leq T \int_0^T \int_{\bR^d}|\Phi(t,x)|^2 dxdt<\infty$, and hence, and $L(\varphi)$ is well-defined.

\medskip

Finally, it is enough to prove \eqref{Fubini-eq} for functions $\Phi$ of the form $\Phi(t,x)=1_A(t) 1_B(x)$ for $A \in \cB([0,T])$ and $B \in \cB_b(\bR^d)$. In this case, $Z(t)=1_A(t) L(B)$ is jointly measurable in $(\omega,t)$, so that we may take $\widetilde{Z}(t)=Z(t)$, and $\varphi(x)=|A|1_B(x)$. Hence,
\[
\int_0^T \widetilde{Z}(t)dt=\int_{\bR^d}\varphi(x) L(dx)=|A|L(B).
\]

\end{proof}

We introduce the following hypothesis.

\medskip

\noindent {\bf Hypothesis E.}
For any $(t,x)\in \bR_{+} \times \bR^d$, either one of the following conditions holds:\\
(i) the map $(s,y) \mapsto \big(G_{t-s}(x-\cdot)*\k\big)(y)$ is measurable on $[0,t] \times \bR^d$; \\
(ii) there exists a measurable function $g:[0,t] \times \bR^d \to \bR$ depending on $(t,x)$, such that 
\begin{equation}
\label{G*k-meas}
g(s,\cdot)= G_{t-s}(x-\cdot)*\k \ \mbox{in $L^2(\bR^d)$, \  for almost all $s \in [0,t]$}.
\end{equation}
We identify $G_{t-s}(x-\cdot)*\k$ with $g(s,\cdot)$, and we write $G_{t-s}(x-\cdot)*\k$ instead of $g(s,\cdot)$.

\begin{theorem}
Suppose that Hypotheses A to E hold.

Let $\{\widetilde{Z}^{(t,x)}(s)\}_{s \in [0,t]}$ be a jointly measurable modification of the process $\{Z^{(t,x)}(s)\}_{s \in [0,t]}$ defined in \eqref{def-Y}. Then, for any $(t,x)\in \bR_{+} \times \bR^d$, the function
$y \mapsto \int_0^t \big(G_{t-s}(x-\cdot)*\k\big)(y)ds$ belongs to  $L^2(\bR^d)$, and
\begin{equation}
\label{Fubini-v}
v(t,x)=\int_0^t \widetilde{Z}^{(t,x)}(s) ds=\int_{\bR^d} \left(\int_0^t \big(G_{t-s}(x-\cdot)*\k\big)(y)ds\right) L(dy).
\end{equation}
\end{theorem}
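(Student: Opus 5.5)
The plan is to apply the Stochastic Fubini Theorem (Theorem \ref{Fubini}) with $T=t$ and
\[
\Phi(s,y)=\big(G_{t-s}(x-\cdot)*\k\big)(y), \quad (s,y)\in[0,t]\times\bR^d .
\]
Under Hypothesis E(ii) we take $\Phi=g$ instead. Since $g(s,\cdot)$ agrees with $G_{t-s}(x-\cdot)*\k$ in $L^2(\bR^d)$ for almost all $s$, the stochastic integrals $\int_{\bR^d}g(s,y)L(dy)$ and $Z^{(t,x)}(s)$ coincide a.s. for almost all $s$. This is enough for the $ds$-integrals, and hence it suffices to work with a jointly measurable $\Phi$. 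In case (i) measurability is given directly.

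\textbf{Hypothesis b).} By Plancherel and Lemma \ref{S-lem}, for each $s\in[0,t]$,
\[
\|\Phi(s,\cdot)\|_{L^2(\bR^d)}^2=\frac{1}{(2\pi)^d}\int_{\bR^d}|\cF G_{t-s}(\xi)|^2|\cF\k(\xi)|^2d\xi=\int_{\bR^d}|\cF G_{t-s}(\xi)|^2\mu(d\xi).
\]
This uses that $\cF(G_{t-s}(x-\cdot)*\k)(\xi)=e^{-i\xi\cdot x}\overline{\cF G_{t-s}(\xi)}\cF\k(\xi)$. Integrating over $s\in[0,t]$ and invoking Hypothesis D gives $\Phi\in L^2([0,t]\times\bR^d)$.

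\textbf{Hypothesis a).} The same Plancherel identity gives
\[
\|\Phi(s_n,\cdot)-\Phi(s,\cdot)\|_{L^2}^2=\int_{\bR^d}|\cF G_{t-s_n}(\xi)-\cF G_{t-s}(\xi)|^2\mu(d\xi),
\]
which tends to $0$ by Hypothesis C. This gives $L^2$-continuity at interior points and at $s=0$.

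\textbf{Endpoint issue.} At $s=t$ we have $G_0$, which is not covered by Hypotheses B and C as stated for $t>0$. Here I would either adopt the convention $G_0=0$ (as is implicit in the earlier lemma), or apply Theorem \ref{Fubini} on $[0,t-\e]$ and let $\e\downarrow0$. The limit is taken in $L^2(\Omega)$ using Hypothesis D on the left-hand side and the $L^2(\bR^d)$ bound from b) on the right-hand side. I expect this boundary and measurability bookkeeping, rather than any estimate, to be the main obstacle.

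\textbf{Conclusion.} With a) and b) verified, Theorem \ref{Fubini} yields several facts: the function $y\mapsto\int_0^t\Phi(s,y)ds$ lies in $L^2(\bR^d)$, the process $Z(s)=\int\Phi(s,y)L(dy)=Z^{(t,x)}(s)$ has a jointly measurable modification, and
\[
\int_0^t\widetilde Z(s)ds=L\Big(\int_0^t\Phi(s,\cdot)ds\Big).
\]
Finally, any two jointly measurable modifications of $Z^{(t,x)}$ agree a.s. for each $s$, so by Fubini they agree $ds\otimes d\bP$-a.e. Their $ds$-integrals therefore coincide a.s. Combined with the preceding lemma's identity $v(t,x)=\int_0^t\widetilde Z^{(t,x)}(s)ds$, this gives \eqref{Fubini-v}.
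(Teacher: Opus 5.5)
Your proposal follows the paper's proof. You apply Theorem \ref{Fubini} to $\Phi(s,y)=\big(G_{t-s}(x-\cdot)*\k\big)(y)$, getting condition a) from Hypotheses B, C and Plancherel, and condition b) from Hypotheses D, E and Plancherel. Your truncation to $[0,t-\e]$ with an $L^2(\Omega)$ limit, and your identification of the two jointly measurable modifications, fill in details the paper leaves implicit (it simply asserts a) on all of $[0,t]$). Only the truncation fix is valid in general, though. The convention $G_0=0$ destroys $L^2$-continuity at $s=t$ for the heat kernel, where $\|\Phi(s,\cdot)\|_{L^2}^2=\int_{\bR^d}e^{-(t-s)|\xi|^2}\mu(d\xi)\to\mu(\bR^d)>0$.
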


\begin{proof}
We fix $(t,x) \in \bR_{+} \times \bR^d$. We apply Theorem \ref{Fubini} to the function
\[
\Phi(s,y)=\big(G_{t-s}(x-\cdot)*\k\big)(y) \quad \mbox{for $s \in [0,t],y \in \bR^d$}.
\]

Hypotheses B and C ensure that condition a) is satisfied: $\Phi(s,\cdot) \in L^2(\bR^d)$ for all $s \in [0,t]$ (by Lemma \ref{S-lem}), and 
if $s_n \to s \in [0,t]$, then by Plancherel's theorem,
\[
\int_{\bR^d} |\Phi(s_n,y)-\Phi(s,y)|^2 dy =\frac{1}{(2\pi)^d} \int_{\bR^d} |\cF G_{t-s_n}(\xi)-\cF G_{t-s}(\xi)|^2 |\cF \k (\xi)|^2 d\xi \to 0.
\]

Hypotheses D and E ensure that condition b) is satisfied: $(s,y) \mapsto \Phi(s,y)$ is measurable on $[0,t] \times \bR^d$, and by Plancherel's theorem,
\[
\int_0^t \int_{\bR^d}|\Phi(s,y)|^2 dyds=\frac{1}{(2\pi)^d}\int_0^t \int_{\bR^d} |\cF G_{t-s}(\xi)|^2 |\cF \k(\xi)|^2 d\xi<\infty.
\]
\end{proof}

\begin{remark}
{\rm The integral on the {\em left} side of \eqref{Fubini-v} is well-defined, if Hypotheses B to D hold. The integral on the {\em right} side of \eqref{Fubini-v} is well-defined under
Hypotheses D and E.
}
\end{remark}

\begin{remark}
{\rm
From relations \eqref{def-v} and \eqref{Fubini-v}, we deduce that the random field solution $v$ of equation \eqref{lin-eq} has the representation:
\[
v(t,x)=\int_{\bR^d \times \bR_0} z \left( \int_0^t \big(G_{t-s}(x-\cdot)*\k\big)(y) ds\right) \widehat{N}(dy,dz).
\]
In Section \ref{section-mult} below, we will see that the integral above coincides with the first term in the Poisson-chaos expansion of the solution of the equation $\cL u(t,x) =u(t,x)\dot{X}(x)$.
}
\end{remark}

We check the validity of Hypothesis E for the heat and wave equations.

\begin{lemma}
a) Hypothesis E.(i) holds for the fundamental solution of the heat equation with $d\geq 1$, and for the fundamental solution of the wave equation with $d\leq 2$.\\
b) Hypothesis E.(ii) holds for the fundamental solution of the wave equation with $d\geq 3$, provided that (D) holds.
\end{lemma}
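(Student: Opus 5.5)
For part a), I will show that when $G_{t-s}$ is a genuine nonnegative function, the convolution is given pointwise by a Lebesgue integral of a jointly measurable integrand. For the heat equation with $d\ge1$, and for the wave equation with $d=1,2$, the map $(s,y,w)\mapsto G_{t-s}(x-w)\k(y-w)$ is a nonnegative Borel function on $[0,t]\times\bR^d\times\bR^d$. This holds because $G$ is explicit and jointly Borel in $(r,z)$ for $r>0$; at $s=t$ we set it to $0$, which changes nothing since $\{t\}$ is a null set. By Tonelli's theorem, $(s,y)\mapsto \int_{\bR^d}G_{t-s}(x-w)\k(y-w)dw$ is measurable with values in $[0,\infty]$. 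It remains to check that this pointwise integral agrees with the distributional convolution $G_{t-s}(x-\cdot)*\k$ from Lemma \ref{S-lem}, which lies in $L^2(\bR^d)$. I will check this by pairing with a test function $\phi\in\cS(\bR^d)$. When $G_{t-s}(x-\cdot)\in L^1$ has compact support or Gaussian decay and $\k$ is tempered, the convolution of the function with the tempered function is given by the integral, via Fubini applied to $\int\int G(x-w)\k(y-w)|\phi(y)|dwdy$. This integral is finite because $\k$ is tempered and $G$ decays rapidly.

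For part b), with the wave equation and $d\ge3$, $G_{t-s}$ is a distribution (a measure for $d=3$, derivatives of measures for $d\ge4$), so I will construct $g$ as an $L^2$-limit. I will use Plancherel in the $y$ variable, viewing $\Phi(s,\cdot):=G_{t-s}(x-\cdot)*\k$ as the inverse Fourier transform of
\[
\xi\mapsto e^{-i\xi\cdot x}\frac{\sin((t-s)|\xi|)}{|\xi|}\cF\k(\xi)
\]
(up to conjugation conventions). By (D) and Lemma \ref{lem-add}, this function of $(s,\xi)$ lies in $L^2([0,t]\times\bR^d)$. It is also jointly measurable in $(s,\xi)$, since $\cF\k$ is measurable and the sine factor is continuous. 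Let $\Psi$ denote this function. The unitary map $L^2([0,t];L^2(\bR^d_\xi))\to L^2([0,t];L^2(\bR^d_y))$ that applies $(2\pi)^{-d}$ times the inverse Fourier transform slice-wise corresponds, via the identification $L^2([0,t]\times\bR^d)\cong L^2([0,t];L^2(\bR^d))$, to a unitary operator on $L^2([0,t]\times\bR^d)$. I then define $g$ as a jointly measurable representative of the image of $\Psi$.

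To verify \eqref{G*k-meas}, I will argue concretely. Truncate $\Psi_n=\Psi 1_{\{|\xi|\le n\}}$, so that $\Psi_n(s,\cdot)\in L^1\cap L^2$. Then
\[
g_n(s,y)=(2\pi)^{-d}\int e^{iy\cdot\xi}\Psi_n(s,\xi)d\xi
\]
is jointly measurable by Fubini. Moreover, $g_n\to g$ in $L^2([0,t]\times\bR^d)$ by Plancherel and dominated convergence. Passing to a subsequence, $g_n(s,\cdot)\to g(s,\cdot)$ in $L^2(\bR^d)$ for a.e. $s$. For each fixed $s$, the same $g_n(s,\cdot)$ converge to $\Phi(s,\cdot)$ in $L^2$, because $\cF\Phi(s,\cdot)=\Psi(s,\cdot)$ by Theorem \ref{GF2}. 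Hence $g(s,\cdot)=\Phi(s,\cdot)$ for a.e. $s$.

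The main obstacle I expect is making precise, in part a), that the distributional convolution from Lemma \ref{S-lem} coincides with the pointwise integral. This matters especially when the pointwise integral could be infinite on a null set. Joint measurability is only needed almost everywhere, so I would first try to handle this by defining the integrand to be $0$ wherever the integral is infinite.
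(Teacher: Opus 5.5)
Your proposal is correct. Part a) is the paper's argument: joint measurability of the nonnegative integrand plus Tonelli/Fubini. Your extra check that the pointwise integral is a representative of the $L^2$ class $G_{t-s}(x-\cdot)*\k$ from Lemma \ref{S-lem} supplies a detail the paper leaves implicit.

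Part b) takes a genuinely different route. The paper views $s\mapsto G_s*\k$ as a map into $L^2(\bR^d)$. It uses Hypothesis C to get continuity, hence (by separability) strong measurability, and Hypothesis D for square integrability. It then imports Proposition 1.2.25 of \cite{HNVW}, i.e.\ the abstract identification of $L^2([0,T];L^2(\bR^d))$ with $L^2([0,T]\times\bR^d)$. You instead exploit that the Fourier side $\Psi(s,\xi)$ is explicitly jointly measurable and lies in $L^2([0,t]\times\bR^d)$ (Hypothesis D). The truncated inverse Fourier integrals $g_n$ are then jointly measurable by Fubini and Cauchy in $L^2([0,t]\times\bR^d)$ by Plancherel. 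Hypothesis B and Theorem \ref{GF2} identify the slice-wise limits.

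Your argument is self-contained and proves by hand exactly the case of the Bochner-space identification that the paper cites. It also never uses Hypothesis C. The paper's argument is shorter and applies to any $L^2$-continuous family without needing a jointly measurable Fourier transform.

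Two cosmetic points. First, you can define $g$ directly as the $L^2([0,t]\times\bR^d)$-limit of the Cauchy sequence $(g_n)$. This makes the preliminary ``unitary operator'' step unnecessary; that map is unitary only up to the factor $(2\pi)^{-d/2}$ anyway. Second, Hypothesis E.(ii) asks for a real-valued $g$. Either take the real part of your limit, or note that $\Psi(s,-\xi)=\overline{\Psi(s,\xi)}$ since $\k$ is real, so the ball truncations $g_n$ are already real.
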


\begin{proof} 
Without loss of generality, we take $t=T$, $x=0$, and we prove the statement for the map $(t,x) \mapsto(G_t * \k)(x)$ instead of $(t,x) \mapsto (G_{T-t} * \k)(x)$.

a) The map $(t,x) \mapsto (G_t*\k)(x)$ is measurable on $\bR_{+} \times \bR^d$ by Fubini theorem, since $(t,x,y) \mapsto G_t(x-y)\k(y)$ is measurable, and
\[
\big(G_t* \k \big)(x)=\int_{\bR^d}G_t(x-y) \k(y)dy.
\]

b) We apply Proposition 1.2.25 of \cite{HNVW} to the spaces $S=[0,T],T=\bR^d,X=\bR$ and the function $F:[0,T] \to L^2(\bR^d)$ given by $F(t)=G_t*\k$. By Lemma \ref{lem-add}, Hypotheses B and C are satisfied, since (D) holds.
Then
$F$ is well-defined, since by Hypothesis B,
\[
\|G_t*\k\|_{L^2(\bR^d)}^2=\int_{\bR^d}|\cF G_t(\xi)|^2 \mu(d\xi)<\infty \quad \mbox{for any $t \in [0,T]$}.
\]
Moreover, by Hypothesis C, $F$ is continuous, since if $t_n \to t\in [0,T]$, then
\[
\|F(t_n)-F(t)\|_{L^2(\bR^d)}^2=\int_{\bR^d} |\cF G_{t_n}(\xi)-\cF G_t(\xi)|^2 \mu(d\xi) \to 0.
\]
Hence, $F$ is measurable with respect to the Borel $\sigma$-field of $L^2(\bR^d)$.
Since $L^2(\bR^d)$ is separable, $F$ is strongly measurable (by Corollary 1.1.10 of \cite{HNVW}), and because the Lebesgue measure $\lambda_d$ on $\bR^d$ is $\sigma$-finite, $F$ is strongly $\lambda_d$-measurable (by Proposition 1.1.16, {\em ibid}). Moreover,
\[
\int_0^T \|F(t)\|_{L^2(\bR^d)}^2 dt =\int_0^T \int_{\bR^d}|(G_t*\k)(x)|^2 dxdt=\int_0^T \int_{\bR^d}|\cF G_t(\xi)|^2 \mu(d\xi)dt<\infty,
\]
and hence, $\int_0^T \|F(t)\|_{L^2(\bR^d)} dt <\infty$. By Proposition 1.2.2, {\em ibid.}, $F$ is Bochner integrable. By Proposition 1.2.25, {\em ibid}, there exists a measurable function $g:[0,T] \times \bR^d \to \bR$ such that $g(t,\cdot)= G_{t}*\k$ in $L^2(\bR^d)$, for almost all $t \in [0,T]$.
\end{proof}

\begin{corollary}
\label{cor-Fub}
Let $\cL$ be the heat or wave operator in dimension $d\geq 1$. If (D) holds, then the random field solution $v$ of equation \eqref{lin-eq} has representation \eqref{Fubini-v}.
\end{corollary}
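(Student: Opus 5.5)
The corollary should follow by checking that the heat and wave operators satisfy Hypotheses A to E when (D) holds, and then invoking the preceding theorem, which gives representation \eqref{Fubini-v} under Hypotheses A to E. No new analysis is needed; the work is to collect the verifications already established and make sure nothing is missing for any dimension $d\geq 1$.

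\emph{Hypothesis A.} For the heat operator, $G_t\in\cS(\bR^d)\subset\cO_C'(\bR^d)$ by Lemma \ref{SO-lem}. For the wave operator, $G_t$ has compact support (as a function if $d\le 2$, as a measure if $d=3$, as a distribution if $d\geq 4$), so $G_t\in\cE'(\bR^d)\subset\cO_C'(\bR^d)$. This is recorded in the two examples preceding the definition of the random field solution.

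\emph{Hypotheses B, C and D.} These are exactly the content of Lemma \ref{lem-add}, which holds for both operators in every dimension under (D). Consequently the earlier lemma applies: the jointly measurable modifications $\widetilde{Y}^{(t,x)}$ and $\widetilde{Z}^{(t,x)}$ exist, and $v(t,x)=\int_0^t\widetilde{Z}^{(t,x)}(s)ds$ is the random field solution.

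\emph{Hypothesis E.} By part a) of the last lemma, E.(i) holds for the heat equation in all $d$ and for the wave equation when $d\le 2$. By part b), E.(ii) holds for the wave equation when $d\geq 3$ under (D). I will note that the lemma was proved for the map $(s,y)\mapsto (G_s*\k)(y)$ on $[0,T]\times\bR^d$. The general $(t,x)$ case follows from the measurable change of variables $s\mapsto t-s$ together with translation by $x$: $G_{t-s}(x-\cdot)*\k$ is the reflection and translate of $G_{t-s}*\widetilde\k$, or equivalently one runs the same argument with $\k$ replaced by its reflection, which also satisfies Assumption A1. This reduction step is the only point needing care, and I expect it to be the main (though mild) obstacle, since translations and reflections preserve measurability and $L^2$ identities.

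With Hypotheses A to E verified, the preceding theorem gives, for every $(t,x)$, that $y\mapsto\int_0^t(G_{t-s}(x-\cdot)*\k)(y)ds$ lies in $L^2(\bR^d)$ and that \eqref{Fubini-v} holds. This is the claim.
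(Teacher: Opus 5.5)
Your proposal is correct and follows the paper's route. The paper states this corollary without proof as an immediate consequence of Hypothesis A (verified in the heat and wave examples), Lemma~\ref{lem-add} (Hypotheses B, C, D under (D)), the lemma establishing Hypothesis E, and the Fubini-type theorem giving \eqref{Fubini-v}; your extra care with the reduction from $(s,y)\mapsto (G_s*\k)(y)$ to general $(t,x)$ is sound, and since $G_t$ is even for both operators, the time change $s\mapsto t-s$ plus a translation in $y$ already suffices, without reflecting $\k$.
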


\subsection{Moments of the solution}

In this section, we study the existence of the moments of the random field solution of the linear stochastic heat and wave equations, in the case of some examples of kernels $\k$.

\medskip

As in \cite{BJ26}, for any $t>0$ and $p\geq 2$, we define
\[
J_p(t)=\|G_t*\k\|_{L^p(\bR^d)}^2.
\]

We have the following general result.

\begin{lemma}
\label{mom-v-lem}
Suppose that Hypotheses B and C hold. Let $v$ be the random field solution of equation \eqref{lin-eq} and $p\geq 2$ be such that $m_p<\infty$. Then for any $t>0$, $x \in \bR^d$, we have:
\[
\|v(t,x)\|_p \leq \cC_p \int_0^t \Big(\big(J_2(s) \big)^{1/2}+ \big(J_p(s) \big)^{1/2}\Big)ds,
\]
where $\cC_p$ is the constant from Corollary \ref{cor-p-mom-L}.
\end{lemma}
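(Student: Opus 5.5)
The plan is to apply Minkowski's integral inequality to the time integral defining $v(t,x)$, and then bound each integrand by Corollary \ref{cor-p-mom-L}. By \eqref{def-v},
\[
v(t,x)=\int_0^t \widetilde{Z}^{(t,x)}(s)\,ds, \qquad \widetilde{Z}^{(t,x)}(s)=L\big(G_{t-s}(x-\cdot)*\k\big) \ \mbox{a.s. for each } s\in[0,t].
\]
Since $\widetilde{Z}^{(t,x)}$ is jointly measurable in $(\omega,s)$, Minkowski's inequality for integrals gives
\[
\|v(t,x)\|_p \leq \int_0^t \|\widetilde{Z}^{(t,x)}(s)\|_p\, ds = \int_0^t \|Z^{(t,x)}(s)\|_p\, ds .
\]
We may assume the right-hand side of the claimed inequality is finite, since otherwise there is nothing to prove.

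For fixed $s\in[0,t)$, Lemma \ref{S-lem} (available under Hypothesis B) gives $\varphi_s:=G_{t-s}(x-\cdot)*\k\in L^2(\bR^d)$. Corollary \ref{cor-p-mom-L} then yields
\[
\|Z^{(t,x)}(s)\|_p\le \cC_p\big(\|\varphi_s\|_{L^2(\bR^d)}+\|\varphi_s\|_{L^p(\bR^d)}\big);
\]
if $\varphi_s\notin L^p$, the bound holds trivially. Next we identify the norms with $J_2$ and $J_p$. Because $G_{t-s}(x-\cdot)$ is the reflection-translate of $G_{t-s}$, we have $\varphi_s(y)=(\widetilde{G}_{t-s}*\k)(y-x)$ up to reflection. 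For the heat and wave kernels, which are symmetric, $\widetilde{G}=G$. In general I would argue directly on the Fourier side: the modulus $|\cF\varphi_s|=|\cF G_{t-s}||\cF\k|$ handles $L^2$, and for $L^p$ I would note that $\varphi_s(y)=(G_{t-s}*\widetilde{\k})(x-y)$ and treat reflection carefully, or simply assume symmetry of $G$ as in the examples. Either way, translation and reflection invariance of Lebesgue measure give $\|\varphi_s\|_{L^q}=J_q(t-s)^{1/2}$ for $q=2,p$. Substituting $s\mapsto t-s$ in the $ds$-integral then produces the stated bound.

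The main obstacle is justifying Minkowski's integral inequality, namely measurability of $(\omega,s)\mapsto \widetilde{Z}^{(t,x)}(s)$. This is exactly what the earlier lemma on jointly measurable modifications provides; that lemma uses Hypothesis C, while Hypothesis D is needed only for the definition of $v$ itself. The other delicate point is the reflection issue in identifying $\|G_{t-s}(x-\cdot)*\k\|_{L^p}$ with $J_p(t-s)^{1/2}$, which I expect to settle by symmetry of $G_t$, or by interpreting $J_p$ accordingly.
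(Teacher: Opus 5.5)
Your proposal is correct and is essentially the paper's proof: apply Minkowski's integral inequality to the jointly measurable integrand in \eqref{def-v}, then apply Corollary \ref{cor-p-mom-L} to $G_{t-s}(x-\cdot)*\k$ for each fixed $s$. The paper passes silently over the two points you flag. It writes $J_q(s)$ directly where $J_q(t-s)$ is meant before integrating, which your substitution $s\mapsto t-s$ fixes. It also ignores the reflection in identifying $\|G_{t-s}(x-\cdot)*\k\|_{L^q}$ with $J_q(t-s)^{1/2}$; this identification is automatic for $q=2$ by Plancherel, and holds for $q=p$ whenever $G_t$ or $\k$ is symmetric, as in all the examples.
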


\begin{proof}
Using representation \eqref{def-v} and Minkowski's inequality, we have: 
\[
\|v(t,x)\|_p \leq \int_0^t \left\| \int_{\bR^d} \big(G_{t-s}(x-\cdot)*\k \big)(y) L(dy)\right\|_p ds.
\]
Applying Corollary \ref{cor-p-mom-L}, we infer that:
\begin{align*}
\left\| \int_{\bR^d} \big(G_{t-s}(x-\cdot)*\k \big)(y) L(dy)\right\|_p & \leq \cC_p  \Big( \|G_{t-s}(x-\cdot)*\k\|_{L^2(\bR^d)}+ \|G_{t-s}(x-\cdot)*\k\|_{L^p(\bR^d)} \Big)\\
&=\cC_p \Big( \big(J_2(s) \big)^{1/2}+ \big(J_p(s) \big)^{1/2} \Big).
\end{align*}
\end{proof}

By focusing on the heat and wave equations and particular kernels $\k$, we will obtain explicit conditions on $p$ which guarantee that the solution $v(t,x)$ has a finite $p$-th moment. 

\medskip

We will need the following result. Recall that the kernels $H_{d,\alpha}$ and $R_{d,\alpha}$ are given by Examples \ref{heat-ex} and \ref{Riesz-ex}.

\begin{theorem}[Theorem 4.4 of \cite{BJ26}]
\label{th44}
a) If $\k =H_{d,\alpha/2}$ for some $\alpha>0$, then
\[
J_p(t) \leq C \int_{\bR^d}|\cF G_t(\xi)|^2 e^{-\frac{\alpha|\xi|^2}{2}}d\xi \quad \mbox{for any $p>0$}.
\]
b) If $\k=R_{d,\alpha/2}$ for some $\alpha \in (0,d)$, then
\[
J_p(t) \leq C \|G_t\|_{L^q(\bR^d)}^2 \quad \mbox{for any} \quad p>\frac{2d}{2d-\alpha},
\]
provided that $G_t \in L^q(\bR^d)$, where $\frac{1}{q}=\frac{1}{p}+\frac{\alpha}{2d}$.\\
c) If $\k \in L^1(\bR^d)$ (in particular if $\k=B_{d,\alpha/2}$ for some $\alpha>0$), then
\[
J_p(t) \leq C \|G_t\|_{L^p(\bR^d)}^2 \quad \mbox{for any}  \quad p\geq 1,
\]
provided that $G_t \in L^p(\bR^d)$. (Here $C>0$ is a constant which depends on $(d,\alpha,p)$.)
\end{theorem}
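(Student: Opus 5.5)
The three parts of the statement rest on different tools: Plancherel together with the $L^p$-boundedness of the Fourier transform for (a), Hardy--Littlewood--Sobolev for (b), and Young's inequality for (c).

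\emph{Part a).} Since $\k=H_{d,\alpha/2}$, the convolution $G_t*\k$ is smooth and its Fourier transform is $\cF G_t(\xi)e^{-\alpha|\xi|^2/4}$, by Theorem \ref{GF2} and Hypothesis A. The regime $p\geq 2$ is the one that matters here. For such $p$ I use the Hausdorff--Young inequality in the dual direction: $\|g\|_{L^p}\leq C\|\cF g\|_{L^{p'}}$ with $p'\leq 2$. Writing $e^{-\alpha|\xi|^2/4}=e^{-\alpha|\xi|^2/8}\cdot e^{-\alpha|\xi|^2/8}$ and applying H\"older with exponents $2/p'$ and $2/(2-p')$ gives
\[
\|\cF G_t\, e^{-\alpha|\cdot|^2/4}\|_{L^{p'}}\leq \Big(\int|\cF G_t|^2 e^{-\alpha|\xi|^2/4}\,d\xi\Big)^{1/2}\|e^{-\alpha|\cdot|^2/8}\|_{L^{r}},
\]
for the appropriate $r$, and the last factor is finite. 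This produces the stated bound with $\alpha/2$ replaced by a smaller multiple of $\alpha$. To recover exactly $e^{-\alpha|\xi|^2/2}$, I instead redistribute the Gaussian factor: apply Hausdorff--Young to $G_t*H_{d,\alpha/2}=(G_t*H_{d,\beta})*H_{d,\alpha/2-\beta}$ with a suitable $\beta$ (equivalently, use Young's inequality with a Gaussian in $L^r$). I expect this constant-matching to be the fussy point. For $p<2$ I simply quote Theorem 4.4 of \cite{BJ26}, whose argument is identical.

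\emph{Part b).} Here $\k=R_{d,\alpha/2}$, so $G_t*\k$ is a constant times the Riesz potential $I_{\alpha/2}G_t$. The Hardy--Littlewood--Sobolev inequality gives $\|I_{\alpha/2}g\|_{L^p}\leq C\|g\|_{L^q}$ whenever $1<q<p<\infty$ and $\frac1q=\frac1p+\frac{\alpha}{2d}$. The condition $q>1$ is exactly $p>\frac{2d}{2d-\alpha}$. Squaring yields the claim whenever $G_t\in L^q$. The one point to check is that the distributional convolution from Theorem \ref{GF1} agrees with the pointwise integral when $G_t\in L^q$. I would argue this by approximating $G_t$ with test functions.

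\emph{Part c).} If $\k\in L^1$, Young's inequality gives $\|G_t*\k\|_{L^p}\leq\|\k\|_{L^1}\|G_t\|_{L^p}$, and squaring gives the bound. For $\k=B_{d,\alpha/2}$, integrability holds because $B_{d,\alpha/2}$ is a Gaussian mixture with total mass $1$. The same consistency-of-convolution check as in part b) applies.
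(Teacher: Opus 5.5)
\textbf{The gap is in part a); parts b) and c) are fine.} The paper gives no proof of this statement (it is quoted from \cite{BJ26}), so I judge the proposal on its own merits. For b) and c), Hardy--Littlewood--Sobolev for $G_t*R_{d,\alpha/2}=c\,I_{\alpha/2}G_t$ (with $q>1$ exactly when $p>\frac{2d}{2d-\alpha}$) and Young's inequality are the natural arguments. The identification of the $\cS'$-convolution with the Lebesgue integral is easiest done by pairing with $\psi\in\cS(\bR^d)$ and using Fubini. Absolute convergence follows from HLS, resp.\ Young, applied to $|G_t|$.

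\textbf{Why your route for a) cannot reach the stated bound.} Since $|\cF\k(\xi)|^2=e^{-\alpha|\xi|^2/2}$, the right-hand side of a) is exactly $C(2\pi)^dJ_2(t)$. So a) asserts an $L^2\to L^p$ (Nikol'skii-type) inequality $\|G_t*\k\|_{L^p}\le C'\|G_t*\k\|_{L^2}$, uniformly in $t$. Your Hausdorff--Young/H\"older or Young argument always needs a nondegenerate Gaussian $H_{d,\alpha/2-\beta}$ to pay for the passage from $L^2$ to $L^p$. It yields $\int|\cF G_t|^2e^{-\beta|\xi|^2}d\xi$ with $\beta<\alpha/2$ and constant $\|H_{d,\alpha/2-\beta}\|_{L^r}^2$, where $\frac1r=\frac12+\frac1p$. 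That constant blows up like $(\alpha/2-\beta)^{-d(1-1/r)}$ as $\beta\uparrow\alpha/2$. So no redistribution of the Gaussian reaches the endpoint.

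This is not a matter of constants: no argument using only $G_t\in\cO_C'$ and Fourier analysis can work. Take radial $\chi\in\cD(\bR^d)$, $R>0$, and define $G$ by $\cF G(\xi)=e^{\alpha|\xi|^2/4}\chi(\xi/R)$. Then $G\in\cS(\bR^d)\subset\cO_C'$ and $G*\k=R^d(\cF^{-1}\chi)(R\,\cdot)$. Hence $\|G*\k\|_{L^p}^2/\|G*\k\|_{L^2}^2$ is a constant times $R^{d(1-2/p)}$, which tends to $\infty$ as $R\to\infty$ when $p>2$.

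\textbf{How to close the gap.} You must use the specific heat and wave kernels. For $p\ge2$, write $\|g\|_{L^p}^p\le\|g\|_{L^\infty}^{p-2}\|g\|_{L^2}^2$. This reduces a) to showing $\sup_{t>0}\|G_t*\k\|_{L^\infty}/\|G_t*\k\|_{L^2}<\infty$.
\begin{itemize}
\item \emph{Heat kernel.} Here $G_t*\k=H_{d,t+\alpha/2}$, and the ratio is a constant times $(t+\alpha/2)^{-d/4}\le C(\alpha/2)^{-d/4}$.
\item \emph{Wave kernel, upper bound.} One has $\|G_t*\k\|_{L^\infty}\le C(t\wedge1)$. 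For $d=1$ this follows from $\|G_t\|_{L^1}\|\k\|_{L^\infty}$ and $\|G_t\|_{L^\infty}\|\k\|_{L^1}$. For $d\ge2$ it follows from $(2\pi)^{-d}\int|\sin(t|\xi|)|\,|\xi|^{-1}e^{-\alpha|\xi|^2/4}d\xi$.
\item \emph{Wave kernel, lower bound.} One has $\|G_t*\k\|_{L^2}^2=(2\pi)^{-d}\int\sin^2(t|\xi|)|\xi|^{-2}e^{-\alpha|\xi|^2/2}d\xi\ge c(t\wedge1)^2$.
\end{itemize}
Note that your weaker bound with $e^{-\beta|\xi|^2}$, $\beta<\alpha/2$, already suffices for every use of a) in the Examples. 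There only $J_p(t)\le C$, resp.\ $J_p(t)\le Ct^2$, is needed.

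\textbf{The case $p<2$.} Your treatment is circular, since you quote the very statement being proved. The claim is also false there: for the heat kernel, $J_p(t)/J_2(t)$ is a constant times $(t+\alpha/2)^{d(1/p-1/2)}$, which tends to $\infty$ as $t\to\infty$. Since $J_p$ is defined in this paper only for $p\ge2$, a) should be read and proved for $p\ge2$ only.
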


We now examine the existence of the moments for the solution of the linear stochastic heat and wave equations. We note that the conditions that we obtain below are {\em weaker} than those encountered in Examples 4.7-4.12 for the linear heat and wave equations driven by a {\em time-dependent} L\'evy colored noise with the same spatial covariance kernel $\k$.

\begin{example}[Heat Equation]
\label{heat-ex}
In this case, $G_t^q(x)=c_q t^{\frac{d}{2}(1-q)}G_{t/q}(x)$, and hence
\begin{equation}
\label{heat-G-Lq}
\|G_t\|_{L^q(\bR^d)}=c_q^{\frac{1}{q} }t^{\frac{d}{2}\big(\frac{1}{q}-1\big)} \quad \mbox{for any $q>0$}.
\end{equation}
Assume that (D) holds, and let $v$ be the random field solution of \eqref{heat-add}. 

\begin{description}

\item[(i)] Suppose that \fbox{$\k=H_{d,\alpha/2}$} for some $\alpha>0$. By Theorem \ref{th44}.a),
\[
J_p(t) \leq C \int_{\bR^d} e^{-t|\xi|^2}e^{-\frac{\alpha|\xi|^2}{2}}d\xi=C\Big(t+\frac{\alpha}{2}\Big)^{-d/2} \leq C \quad \mbox{for any $p>0$}.
\]
By Lemma \ref{mom-v-lem},
\[
\|v(t,x)\|_p \leq Ct \quad \mbox{for any $p\geq 2$ such that $m_p<\infty$}.
\]

\item[(ii)] Suppose that \fbox{$\k=R_{d,\alpha/2}$} for some $\alpha \in \big((d-2)\vee 0,d\big)$.  By Theorem \ref{th44}.b) and \eqref{heat-G-Lq},
\begin{equation}
\label{heat-J-R}
J_p(t) \leq C t^{d\big(\frac{1}{p}+\frac{\alpha}{2d}-1 \big)} \quad \mbox{for any $p\geq 2\big(>\frac{2d}{2d-\alpha}\big)$}.
\end{equation}
By Lemma \ref{mom-v-lem},
\begin{align}
\nonumber
\|v(t,x)\|_p & \leq C \int_0^t \Big(s^{\frac{d}{2}\big(\frac{1}{2}+\frac{\alpha}{2d}-1\big)}+s^{\frac{d}{2}\big(\frac{1}{p}+\frac{\alpha}{2d}-1\big)} \Big) ds \\
\label{heat-v-R}
&= C t^{\frac{d}{2}\big(\frac{\alpha}{2d}-1\big)+1} \big(t^{\frac{d}{4}}+t^{\frac{d}{2p}} \big),
\end{align}
for any $p\geq 2$ such that $m_p<\infty$ and $\frac{d}{2}\big(\frac{1}{p}+\frac{\alpha}{2d}-1 \big)+1>0$, i.e.
\begin{equation}
\label{p-heat-R}
p(2d-\alpha-4)<2d.
\end{equation}

If $\alpha \geq 2d-4$, condition \eqref{p-heat-R} holds for any $p>0$. Since we must also have $\alpha<d$, this case can only occur if $2d-4<d$, i.e. $d<4$.

If $\alpha<2d-4$, condition \eqref{p-heat-R} holds for any $p<\frac{2d}{2d-\alpha-4}$. (The fact that $p\geq 2$ does not impose any additional restrictions, since $2<\frac{2d}{2d-\alpha-4}$ is equivalent to $\alpha>d-4$.) Since we must have $\alpha> d-2$, this case can only occur if $d-2<2d-4$, i.e. $d>2$.

In summary, we obtain that \eqref{heat-v-R} holds in the following cases:
\begin{align*}
d\leq 2, & \quad \alpha \in (0,d), \quad p\geq 2 \ \mbox{such that} \ m_p<\infty,\\
d=3, & \quad \alpha \in [2,3), \quad p\geq 2 \ \mbox{such that} \ m_p<\infty, \\
d=3, & \quad \alpha \in (1,2), \quad 2\leq p<\frac{6}{2-\alpha} \ \mbox{such that} \ m_p<\infty, \\
d\geq 4, & \quad \alpha \in (d-2,d), \quad 2\leq p<\frac{2d}{2d-\alpha-4} \ \mbox{such that} \ m_p<\infty .
\end{align*}

\item[(iii)] Suppose that \fbox{$\k\in L^1(\bR^d)$}. By Theorem \ref{th44}.c) and \eqref{heat-G-Lq},
\begin{equation}
\label{heat-J-L}
J_p(t) \leq C t^{d\big(\frac{1}{p}-1 \big)} \quad \mbox{for any $p\geq 1$}.
\end{equation}
By Lemma \ref{mom-v-lem},
\begin{align}
\nonumber
\|v(t,x)\|_p & \leq C \int_0^t \Big(s^{\frac{d}{2}\big(\frac{1}{2}-1 \big)} + s^{\frac{d}{2}\big(\frac{1}{p}-1 \big)}  \Big) ds\\
\label{heat-v-L1}
&=C t^{-\frac{d}{2}+1} \big(t^{\frac{d}{4}}+t^{\frac{d}{2p}} \big),
\end{align}
for any $p\geq 2$ such that $m_p<\infty$ and $\frac{d}{2}\big(\frac{1}{p}-1 \big)+1>0$, i.e.
\begin{equation}
\label{p-heat-L1}
p(d-2)<d.
\end{equation}
If $d\leq 2$, condition \eqref{p-heat-L1} holds for any $p>0$, whereas if $d>2$, condition \eqref{p-heat-L1} reduces to $p<\frac{d}{d-2}$. Since we must also have $p\geq 2$, this case can occur only if $2<\frac{d}{d-2}$, i.e. $d<4$.
In summary, \eqref{heat-v-L1} holds in the following cases:
\begin{align*}
d\leq 2, & \quad p\geq 2 \ \mbox{such that} \ m_p<\infty,\\
d=3, & \quad 2 \leq p<3 \ \mbox{such that}  \ m_p<\infty.
\end{align*}

\end{description}

\end{example}

\begin{example}[Wave Equation]
\label{wave-ex}
In this case, $G_t$ is a function only in dimension $d\leq 2$, and
\begin{equation}
\label{wave-G-Lq}
\|G_t\|_{L^q(\bR^d)}=
\left\{
\begin{array}{ll}
C t^{1/q} & \mbox{if $d=1$ and $q>0$}, \\
C t^{\frac{2}{q}-1}& \mbox{if $d=2$ and $q\in (0,2)$}, \\
\infty & \mbox{if $d=2$ and $q \geq 2$}.
\end{array} \right.
\end{equation}

Assume that (D) holds, and let $v$ be the solution of \eqref{wave-add}. 

\begin{description}

\item[(i)] Suppose that \fbox{$\k=H_{d,\alpha/2}$} for some $\alpha>0$. By Theorem \ref{th44}.a),
\[
J_p(t) \leq C \int_{\bR^d} \frac{\sin^2(t|\xi|)}{|\xi|^2}e^{-\frac{\alpha|\xi|^2}{2}}d\xi \leq C t^2 \quad \mbox{for any $p>0$}.
\]
By Lemma \ref{mom-v-lem},
\[
\|v(t,x)\|_p \leq Ct^2 \quad \mbox{for any $p\geq 2$ such that $m_p<\infty$}.
\]

\item[(ii)] Suppose that \fbox{$d\leq 2$} and \fbox{$\k=R_{d,\alpha/2}$} for some $\alpha \in (0,d)$. By Theorem \ref{th44}.b) and \eqref{wave-G-Lq},
\[
J_p(t) \leq 
\left\{
\begin{array}{ll}
C t^{\frac{2}{p}+\alpha} & \mbox{for any $p\geq 2$, \quad \quad \quad \quad if $d=1$}, \\
C t^{\frac{4}{p}+\alpha-2} & \mbox{for any $2 \leq p<\frac{4}{2-\alpha}$, \quad if $d=2$}.
\end{array} \right.
\]
By Lemma \ref{mom-v-lem},
\[
\|v(t,x)\|_p \leq 
\left\{
\begin{array}{ll}
C t^{\frac{\alpha}{2}+1} \big(t^{\frac{1}{2}}+t^{\frac{1}{p}} \big) &  \mbox{for any $p\geq 2$ with $m_p<\infty$, \quad \quad \quad \ \ if $d=1$}, \\
C t^{\frac{\alpha}{2}} \big(t+t^{\frac{2}{p}} \big)  & \mbox{for any $2 \leq p<\frac{4}{2-\alpha}$ with $m_p<\infty$, \quad if $d=2$}.
\end{array} \right.
\]

\item[(iii)] Suppose that \fbox{$d=1$} and \fbox{$\k\in L^1(\bR^d)$}. By Theorem \ref{th44}.c) and \eqref{wave-G-Lq},
\[
J_p(t) \leq C t^{\frac{2}{p}} \quad \mbox{for any $p>0$}.
\]
By Lemma \ref{mom-v-lem},
\[
\|v(t,x)\|_p \leq C t \big(t^{\frac{1}{2}}+t^{\frac{1}{p}} \big) \quad \mbox{for all $p\geq 2$ such that $m_p<\infty$}.
\]

\end{description}
\end{example}

\begin{remark}
{\rm There is another method for estimating the moments of $v(t,x)$. Using H\"older's inequality and Corollary \ref{cor-p-mom-L}, we obtain that for any $p\geq 2$ such that $m_p<\infty$,
\begin{align}
\nonumber
\bE|v(t,x)|^p & \leq t^{p-1} \int_0^t \bE \left|\int_{\bR^d} \big(G_{t-s}(x-\cdot)*\k\big)(y)L(dy) \right|^p ds\\
\label{method2}
& \leq t^{p-1} \cC_p^p \int_0^t \Big( \big(J_2(s) \big)^{p/2}+ \big(J_p(s) \big)^{p/2}\Big)ds.
\end{align}
This method gives stronger conditions. To see this, consider the stochastic heat equation. 

If $\k=R_{d,\alpha/2}$ for some $\alpha \in \big((d-2)\vee 0, d\big)$, then using estimate \eqref{heat-J-R} for $J_p(t)$ and \eqref{method2}, 
\[
\bE|v(t,x)|^p \leq t^{p-1} \cC_p^p \int_0^t \Big(s^{\frac{dp}{2}\big(\frac{1}{2}+\frac{\alpha}{2d}-1\big)}+s^{\frac{dp}{2}\big(\frac{1}{p}+\frac{\alpha}{2d}-1\big)} \Big) ds.
\]
The last integral is finite provided that 
\begin{equation}
\label{cond-p1}
\frac{dp}{2}\Big(\frac{1}{p}+\frac{\alpha}{2d}-1 \Big)+1>0,
\end{equation}
which is a {\em stronger} requirement than condition $\frac{d}{2}\big(\frac{1}{p}+\frac{\alpha}{2d}-1 \big)+1>0$ encountered in Example \ref{heat-ex}.(ii), since $\frac{1}{p}<1-\frac{\alpha}{2d}$ (due to the fact that $p\geq 2>\frac{2d}{2d-\alpha}$). We note also that \eqref{cond-p1} coincides with the condition encountered in Example 4.8 of \cite{BJ26} for the time-dependent L\'evy colored noise.

Similarly, if $\k \in L^1(\bR^d)$, then using estimate \eqref{heat-J-L} for $J_p(t)$ and \eqref{method2}, we see that
\[
\bE|v(t,x)|^p \leq t^{p-1} \cC_p^p \int_0^t  \Big(s^{\frac{dp}{2}(\frac{1}{2}-1)}+s^{\frac{dp}{2}(\frac{1}{p}-1)} \Big) ds.
\]
The last integral is finite provided that $\frac{dp}{2}\big(\frac{1}{p}-1 \big)+1>0$ (i.e. $p<1+\frac{2}{d}$), a condition which is {\em stronger} than condition $\frac{d}{2}\big(\frac{1}{p}-1 \big)+1>0$ encountered in Example \ref{heat-ex}.(iii), and which coincides with the one encountered in Example 4.9 of \cite{BJ26} for the time-dependent L\'evy colored noise.
}
\end{remark}

\subsection{Distribution-valued solution}

In this section, we introduce another concept of solution for the linear equation, and we analyze its relationship with the random field solution, similarly to Section 4 of \cite{dalang99}.

\medskip

The new definition is motivated by the following formal calculation. We fix $t>0$ and $\phi \in \cD(\bR^d)$. We multiply \eqref{defi-v} by $\phi(x)$ and we integrate $dx$. Using a formal stochastic Fubini theorem, we obtain:
\begin{align*}
\int_{\bR^d}v(t,x)\phi(x)dx=\int_0^t \int_{\bR^d} \left(\int_{\bR^d} G_{t-s}(x-y)\phi(x)dx\right) X(dy)ds
=\int_0^t X\big(\phi* \widetilde{G}_{t-s}\big)ds.
\end{align*}

Under Hypothesis A, $X\big(\phi* \widetilde{G}_{t-s}\big)$ is well-defined in $L^2(\Omega)$ for any $s \in [0,t]$, since $\phi* \widetilde{G}_{t-s} \in \cS(\bR^d)$ and a function in $\cS(\bR^d)$ induces a distribution which belongs to $\overline{\cP}_d(\bR^d)$. But the map $s \mapsto X\big(\phi* \widetilde{G}_{t-s}\big)$ may not be measurable or integrable on $[0,t]$. To avoid this problem, we will work with a modification, which we define below.

\medskip

\begin{lemma}
If Hypothesis A holds, then $\int_{\bR^d}|\cF(\phi* \widetilde{G}_{t})(\xi)|^2 \mu(d\xi)
<\infty$ for any $t>0$ and $\phi \in \cD(\bR^d)$.
\end{lemma}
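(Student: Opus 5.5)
The plan is to show that $\phi*\widetilde{G}_t\in\cS(\bR^d)$, and then to use the fact that $\mu$ is a tempered measure, which integrates any rapidly decreasing function.

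\emph{Step 1.} Since $\widetilde{G}_t$ is the reflection of $G_t$, Hypothesis A gives $\widetilde{G}_t\in\cO_C'(\bR^d)$, because $\cO_C'$ is stable under reflection. By the appendix results on distributions with rapid decrease (the convolution property recalled with Theorem \ref{GF1}), convolution with an element of $\cO_C'(\bR^d)$ maps $\cS(\bR^d)$ continuously into $\cS(\bR^d)$. Since $\phi\in\cD(\bR^d)\subset\cS(\bR^d)$, we get $\psi:=\phi*\widetilde{G}_t\in\cS(\bR^d)$. It follows that $\cF\psi\in\cS_{\bC}(\bR^d)$, and that $|\cF\psi|^2$ is a nonnegative rapidly decreasing function. (Equivalently, by Theorem \ref{GF2}, $\cF\psi=\cF\phi\,\overline{\cF G_t}$, and $\cF G_t\in\cO_M$ is a slowly increasing smooth function, so the product lies in $\cS$.)

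\emph{Step 2.} We bound the integral using temperedness of $\mu$. Recall that $\mu=\cF f$ with $f$ tempered, and that $\mu(d\xi)=(2\pi)^{-d}|h(\xi)|^2d\xi$ where $|h|^2$ is a tempered function by Assumption A1(b). A tempered measure satisfies $\int_{\bR^d}(1+|\xi|^2)^{-k}\mu(d\xi)<\infty$ for some integer $k\geq 0$. Since $\cF\psi\in\cS$, there is a constant $C_k$ with
\[
|\cF\psi(\xi)|^2\leq C_k(1+|\xi|^2)^{-k}.
\]
Hence
\[
\int_{\bR^d}|\cF\psi(\xi)|^2\mu(d\xi)\leq C_k\int_{\bR^d}(1+|\xi|^2)^{-k}\mu(d\xi)<\infty.
\]

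\emph{Main obstacle.} The only delicate point is justifying the bound $\int(1+|\xi|^2)^{-k}\mu(d\xi)<\infty$ from the meaning of ``tempered function'' used in Assumption A1. If the paper's definition is that $\int|g(\xi)|(1+|\xi|^2)^{-k}d\xi<\infty$ for some $k$, this is immediate with $g=|h|^2$. Otherwise, one would use the standard fact (Schwartz) that a positive tempered measure has polynomial growth, $\mu(B(0,R))\leq C(1+R)^m$, and then integrate over dyadic shells.
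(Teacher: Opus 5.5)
Your proof is correct and follows essentially the paper's argument. The paper uses Hypothesis A and Theorem \ref{GF2} to get $\cF G_t\in\cO_M$, so that $|\cF G_t(\xi)|\le C(1+|\xi|^2)^k$; it absorbs this growth into the rapid decay of $\cF\phi$, and then applies Assumption A1(b) in exactly the form $\int_{\bR^d}(1+|\xi|^2)^{-\ell}|\cF\k(\xi)|^2\,d\xi<\infty$. That is your parenthetical route in Step 1 followed by your Step 2. Your main route through appendix property b) ($\phi*\widetilde{G}_t\in\cS$) is an equivalent repackaging, and the ``obstacle'' you flag is settled by the reading of ``tempered function'' that the paper itself uses.
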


\begin{proof} By Theorem A.2 and Hypothesis A, $\cF G_t \in \mathcal O_M$. Hence, there exist constants $C,k>0$ (depending on $t$) such that $|\cF G_t(\xi)| \le C(1+|\xi|^2)^k$ for all $\xi\in \bR^d$.

On the other hand, by Assumption A1.(b), $|\cF \k|^2$ is tempered, so there exists $\ell>0$ such that
\[
\int_{\bR^d} \left(\frac{1}{1+|\xi|^2}\right)^{\ell}\,|\cF \k(\xi)|^2\,d\xi < \infty.
\]
Therefore,
\begin{equation*}
\begin{aligned}
& \int_{\bR^d} |\cF(\phi*\widetilde{G}_{t})(\xi)|^2\,\mu(d\xi)
 = \int_{\bR^d} |\cF\phi(\xi)|^2\,|\cF G_{t}(\xi)|^2\,\mu(d\xi) \\
& \quad \quad \quad \le C \int_{\bR^d} |\cF\phi(\xi)|^2 (1+|\xi|^2)^{2k}\,|\cF \k(\xi)|^2\,d\xi \\
& \quad \quad \quad \le C \sup_{\xi\in\bR^d}\Big(|\cF\phi(\xi)|^2 (1+|\xi|^2)^{2k+\ell}\Big)
\int_{\bR^d} \left(\frac{1}{1+|\xi|^2}\right)^{\ell}\,|\cF \k(\xi)|^2\,d\xi < \infty,
\end{aligned}
\end{equation*}
where for the last line we used the fact that $\cF\phi\in \cS(\bR^d)$.
\end{proof}

We fix $s\in [0,t]$. By Lemma \ref{SO-lem}, the smooth function $\phi* \widetilde{G}_{t-s}$ induces a distribution in $\cO_{C}'(\bR^d)$. 
Applying Lemma \ref{S-lem} to $S=\phi* \widetilde{G}_{t-s}$, we infer that:
\[
Y^{(t,\phi)}(s):=\int_{\bR^d} \big( \phi * \widetilde{G}_{t-s}\big)(y) X(dy)=\int_{\bR^d} \big( \phi * \widetilde{G}_{t-s}* \k \big)(y) L(dy)=: Z^{(t,\phi)}(s) \quad \mbox{in $L^2(\Omega)$}.
\]

We introduce the following hypothesis.

\medskip

\noindent {\bf Hypothesis F.} (i) The function $t \mapsto \cF G_t(\xi)$ is continuous on $\bR_{+}$, for any $\xi \in \bR^d$; \\
(ii) $|\cF G_t(\xi)|\leq C_t$ for any $\xi \in \bR^d$, where $C_t>0$ is non-decreasing in $t$.

\medskip

Due to part (i) of Hypothesis F, processes $\{Y^{(t,\phi)}(s)\}_{s \in [0,t]}$ and $\{Z^{(t,\phi)}(s)\}_{s \in [0,t]}$ are continuous in $L^2(\Omega)$, since if $s_n \to s \in [0,t]$, then
\begin{align*}
\bE|Y^{(t,\phi)}(s_n)-Y^{(t,\phi)}(s)|^2&=\bE|Z^{(t,\phi)}(s_n)-Z^{(t,\phi)}(s)|^2=m_2\big \|\phi* \big(\widetilde{G}_{t-s_n}- \widetilde{G}_{t-s}\big)*\k\big\|_{L^2(\bR^d)}^2\\
&=m_2 \int_{\bR^d}|\cF \phi(\xi)|^2 |\cF G_{t-s_n}(\xi)-\cF G_{t-s}(\xi)|^2 \mu(d\xi) \to 0.
\end{align*}
Hence, there exists a jointly measurable process $\{\widetilde{Y}^{(t,\phi)}(s)\}_{s \in [0,t]}$ such that $Y^{(t,\phi)}(s)=\widetilde{Y}^{(t,\phi)}(s)$ a.s. for any $s\in [0,t]$. Moreover, part (ii) of Hypothesis F allows us to infer that
\[
\bE\int_0^t |\widetilde{Y}^{(t,\phi)}(s)|^2 ds=m_2 \int_0^t \int_{\bR^d}|\cF \phi(\xi)|^2 |\cF G_{t-s}(\xi)|^2 \mu(d\xi)ds<\infty.
\]
Hence, with probability 1, $\int_0^t |\widetilde{Y}^{(t,\phi)}(s)|^2 ds<\infty$ and $\int_0^t |\widetilde{Y}^{(t,\phi)}(s)| ds<\infty$. 

In summary, if Hypotheses A and F hold, we define for any $t>0$ and $\phi \in \cD(\bR^d)$,
\begin{equation}
\label{def-vtp}
v^{(t)}(\phi):=\int_0^t \widetilde{Y}^{(t,\phi)}(s)ds.
\end{equation}
We say that  $\{v^{(t)};t\geq 0 $\}  is a {\em distribution solution} of \eqref{lin-eq}.
The following result gives the relation between the distribution solution and the random field solution.

\begin{theorem}
Suppose that Hypotheses A and F hold.
Let
\[
H_t(\xi):=\int_0^t \cF G_{t-s}(\xi)ds 
\qquad t>0,\ \xi\in \bR^d.
\]
In order that there exists a jointly measurable, locally mean-square bounded process
$V=\{V(t,x);t\geq 0,\ x\in \bR^d\}$, continuous in mean square, such that for any $t>0$ and $\phi \in \cD(\bR^d)$,
\begin{equation}
\label{v-V}
v^{(t)}(\phi)=\int_{\bR^d}V(t,x)\phi(x)dx
\quad  \mbox{a.s.},
\end{equation}
it is necessary and sufficient that
\begin{equation}
\label{cond1}
\sup_{t \in [0,T]}\int_{\bR^d}|H_t(\xi)|^2\mu(d\xi)<\infty,
\end{equation}
and
\begin{equation}
\label{cond2}
\lim_{t\to s}\int_{\bR^d}|H_t(\xi)-H_s(\xi)|^2\mu(d\xi)= 0.
\end{equation}
If in addition, the random-field solution $v$ of \eqref{lin-eq} exists and has representation \eqref{Fubini-v},
then $V$ is a modification of $v$.
\end{theorem}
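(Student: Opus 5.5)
The plan is to follow the scheme of Section 4 of \cite{dalang99} (Theorem 11 there), using the $L^2(\Omega)$-isometry for $X$ in place of the Gaussian isometry. The central object is the covariance of the distribution solution. Using \eqref{def-vtp}, Fubini's theorem (justified by Hypothesis F(ii) and $\bE\int_0^t|\widetilde{Y}^{(t,\phi)}(s)|^2ds<\infty$) and the isometry \eqref{def-prod0}, one computes for $\phi,\psi\in\cD(\bR^d)$
\[
\bE\big[v^{(t)}(\phi)v^{(s)}(\psi)\big]=m_2\int_{\bR^d}\cF\phi(\xi)\overline{\cF\psi(\xi)}\,H_t(\xi)\overline{H_s(\xi)}\,\mu(d\xi).
\]
Here the $dr\,dr'$ integrals are moved inside the $\mu(d\xi)$ integral, which is legitimate because $|\cF G_r|\le C_T$ and $\cF\phi,\cF\psi$ are rapidly decreasing. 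In particular, $v^{(t)}(\phi)=X(\phi*\widetilde{\Lambda}_t)$ in $L^2(\Omega)$, where $\Lambda_t$ is the tempered distribution with $\cF\Lambda_t=H_t$. This follows by approximating the $ds$ integral with Riemann sums, which converge in $L^2(\Omega)$ by the $L^2$-continuity in $s$ shown before \eqref{def-vtp}.

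\emph{Sufficiency.} Assume \eqref{cond1} and \eqref{cond2}. Then $\Lambda_t(x-\cdot)\in\overline{\cP}_d(\bR^d)\subset\cP_{0,d}(\bR^d)$, since its Fourier transform is $e^{-i\xi\cdot x}\overline{H_t(\xi)}$ and has finite $\mu$-norm. I define $V(t,x):=X(\Lambda_t(x-\cdot))$ and check three properties.
\begin{itemize}
\item Local mean-square boundedness: $\bE|V(t,x)|^2=m_2\int|H_t|^2d\mu$, which is bounded on $[0,T]$ by \eqref{cond1}.
\item Mean-square continuity: $\bE|V(t,x)-V(s,y)|^2=m_2\int|e^{-i\xi\cdot x}H_t-e^{-i\xi\cdot y}H_s|^2d\mu$. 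This tends to $0$ by \eqref{cond2} together with dominated convergence in the phase, using $|H_s|^2\in L^1(\mu)$.
\item Joint measurability: mean-square continuity gives a jointly measurable modification via Proposition 3.21 of \cite{PZ07}.
\end{itemize}
To get \eqref{v-V}, I apply a stochastic Fubini argument analogous to Theorem \ref{Fubini}, with $X$ in place of $L$, or via Lemma \ref{S-lem} transferred to $L$. This identifies $\int V(t,x)\phi(x)dx$ with $X(\phi*\widetilde{\Lambda}_t)$. Equivalently, I verify that $\bE|\int V(t,x)\phi(x)dx-v^{(t)}(\phi)|^2=0$ by expanding the square with the covariance formula above, all three terms being equal to $m_2\int|\cF\phi|^2|H_t|^2d\mu$.

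\emph{Necessity.} Suppose such a $V$ exists. Take an approximate identity $\phi_n\in\cD(\bR^d)$, $\phi_n\ge0$, with $\cF\phi_n\to1$ pointwise and $|\cF\phi_n|\le1$. By mean-square continuity of $V$ and local boundedness, $\int V(t,x)\phi_n(x)dx\to V(t,0)$ in $L^2(\Omega)$. Hence $v^{(t)}(\phi_n)$ is Cauchy in $L^2(\Omega)$ and $\bE|V(t,0)|^2=\lim_n m_2\int|\cF\phi_n|^2|H_t|^2d\mu$. By Fatou's lemma, $m_2\int|H_t|^2d\mu\le\sup_{s\le T,x}\bE|V(s,x)|^2<\infty$, which gives \eqref{cond1}. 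For \eqref{cond2} I apply the same argument to $V(t,0)-V(s,0)$, obtaining $m_2\int|H_t-H_s|^2d\mu\le\liminf_n\bE|v^{(t)}(\phi_n)-v^{(s)}(\phi_n)|^2=\bE|V(t,0)-V(s,0)|^2$. The right side tends to $0$ as $t\to s$.

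\emph{Identification with $v$.} If $v$ has representation \eqref{Fubini-v}, I compute $\bE|v(t,x)-V(t,x)|^2$ through the $L$-representations. Both are integrals against $L$, since $V(t,x)=L(\Lambda_t(x-\cdot)*\k)$ by Lemma \ref{S-lem}. Their $L^2$ kernels have the same Fourier transform $e^{-i\xi\cdot x}\overline{H_t}\,\cF\k$, using Plancherel and Fubini for $\int_0^t(G_{t-s}(x-\cdot)*\k)ds$. So the difference vanishes, and $V(t,x)=v(t,x)$ a.s.

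I expect the main obstacle to be the rigorous justification of exchanging the $ds$-integral with the $X$-integral and with the $\mu$-integral. This means showing that $H_t$ is measurable and that $\int_0^t X(\phi*\widetilde{G}_{t-s})ds=X(\phi*\widetilde{\Lambda}_t)$. I would handle it with the Riemann-sum approximation, controlled by Hypothesis F.
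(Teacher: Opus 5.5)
Your proposal is correct and takes essentially the same route as the paper. For necessity, both you and the paper use the covariance identity $\bE|v^{(t)}(\phi)|^2=m_2\int|\cF\phi|^2|H_t|^2d\mu$ with an approximate identity and Fatou's lemma. For sufficiency, both build a process whose kernel has Fourier transform $e^{-i\xi\cdot x}\overline{H_t}$ (the paper's $L(K_{t,x})$ with $\cF K_{t,x}=e^{-i\xi\cdot x}\cF\k\,\overline{H_t}$) and then expand $\bE|\int V(t,x)\phi(x)dx-v^{(t)}(\phi)|^2$ into three equal terms.

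One small caveat concerns your identification step. Lemma \ref{S-lem} requires $S\in\cO_C'(\bR^d)$, and $\Lambda_t$ need not satisfy this under Hypotheses A and F alone. To get $V(t,x)=L(\cdot)$, either define $V(t,x)=L(K_{t,x})$ directly as the paper does, or rerun the approximation argument of Lemma \ref{S-lem}. That argument only needs $S\in\overline{\cP}_d(\bR^d)$, once $S*\k$ is defined as the $L^2$ function with Fourier transform $\cF S\,\cF\k$.
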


\begin{proof}
(Necessity). Assume that there exists a process $V$ satisfying the conditions of the theorem. We proceed as in the proof of Theorem 11 of \cite{dalang99}. Fix $x_0\in \bR^d$. Let $\psi_n(x)=n^d\psi(nx),$ where $\psi\in \cD(\bR^d)$, $\psi\ge 0$, and $\int_{\bR^d}\psi(x)dx=1.$ Set $\phi_n(x)=\psi_n(x-x_0).$ We compute $\bE|v^{(t)}(\phi_n)|^2$ in two ways. First, using \eqref{v-V} and Fubini's theorem, we have
\[
\bE|v^{(t)}(\phi_n)|^2
=
\int_{\bR^d}\int_{\bR^d}\bE[V(t,x)V(t,y)]\phi_n(x)\phi_n(y)dxdy
\to \bE|V(t,x_0)|^2,
\qquad \mbox{as } n\to\infty,
\]
by the Lebesgue differentiation theorem. Secondly, using \eqref{def-vtp} and Fubini's theorem,
\begin{align}
\nonumber
\bE|v^{(t)}(\phi_n)|^2
&=\int_0^t\int_0^t \bE\big[\widetilde{Y}^{(t,\phi_n)}(s)\widetilde{Y}^{(t,\phi_n)}(r)\big]dsdr\\
\nonumber
&=m_2\int_0^t\int_0^t\int_{\bR^d}
(\phi_n*\widetilde{G}_{t-s}*\k)(y)(\phi_n*\widetilde{G}_{t-r}*\k)(y)dydsdr\\
\nonumber
&=m_2\int_0^t\int_0^t\int_{\bR^d}
|\cF\phi_n(\xi)|^2\cF G_{t-s}(\xi)\overline{\cF G_{t-r}(\xi)}\mu(d\xi)dsdr\\
\label{second-way}
&=m_2\int_{\bR^d}
|\cF\phi_n(\xi)|^2\left|\int_0^t \cF G_{t-s}(\xi)ds\right|^2\mu(d\xi).
\end{align}
Since $\cF\phi_n(\xi)\to 1$ as $n\to\infty$, Fatou's lemma yields
\begin{align*}
\int_{\bR^d}\left|\int_0^t \cF G_{t-s}(\xi)ds\right|^2\mu(d\xi)
&\le \liminf_{n\to\infty}
\int_{\bR^d}|\cF\phi_n(\xi)|^2\left|\int_0^t \cF G_{t-s}(\xi)ds\right|^2\mu(d\xi)\\
&=\frac{1}{m_2}\liminf_{n\to\infty}\bE|v^{(t)}(\phi_n)|^2\\
&=\frac{1}{m_2}\bE|V(t,x_0)|^2<\infty.
\end{align*}
Therefore, since $V$ is locally mean-square bounded,
\begin{align*}
\sup_{ t\in [0, T]}\int_{\bR^d}|H_t(\xi)|^2\mu(d\xi)
&\le \frac{1}{m_2}\sup_{t \in [0,T]}\bE|V(t,x_0)|^2<\infty.
\end{align*}

Next, we prove that \eqref{cond2} holds.
By \eqref{v-V},
\[
v^{(t)}(\phi_n)-v^{(s)}(\phi_n)
=
\int_{\bR^d}(V(t,x)-V(s,x))\phi_n(x)dx
\quad \mbox{a.s.}
\]
Hence, by Fubini's theorem,
\begin{align*}
\bE|v^{(t)}(\phi_n)-v^{(s)}(\phi_n)|^2
&=
\int_{\bR^d}\int_{\bR^d}
\bE[(V(t,x)-V(s,x))(V(t,y)-V(s,y))]
\phi_n(x)\phi_n(y)dxdy\\
&\to \bE|V(t,x_0)-V(s,x_0)|^2
\end{align*}
as $n\to\infty$, again by the Lebesgue differentiation theorem.

On the other hand, similarly to \eqref{second-way},
\begin{align*}
\bE|v^{(t)}(\phi_n)-v^{(s)}(\phi_n)|^2
&=
m_2\int_{\bR^d}
|\cF\phi_n(\xi)|^2|H_t(\xi)-H_s(\xi)|^2\mu(d\xi).
\end{align*}
Therefore, by Fatou's lemma,
\begin{align*}
\int_{\bR^d}|H_t(\xi)-H_s(\xi)|^2\mu(d\xi)
&\le \liminf_{n\to\infty}
\int_{\bR^d}
|\cF\phi_n(\xi)|^2|H_t(\xi)-H_s(\xi)|^2\mu(d\xi)\\
&=\frac{1}{m_2}\liminf_{n\to\infty}
\bE|v^{(t)}(\phi_n)-v^{(s)}(\phi_n)|^2\\
&=\frac{1}{m_2}\bE|V(t,x_0)-V(s,x_0)|^2.
\end{align*}
Since $V$ is continuous in mean square, the right-hand side converges to $0$ as $t\to s$.

(Sufficiency). Formally, 
\begin{align*}
\cF\left(\int_0^t (G_{t-s}(x-\cdot)*\k)ds\right)(\xi)
&=\int_0^t \cF\big(G_{t-s}(x-\cdot)*\k\big)(\xi)ds\\
&=\int_0^t \cF G_{t-s}(x-\cdot)(\xi)\cF\k(\xi)ds\\
&=\int_0^t e^{-i\xi\cdot x}\overline{\cF G_{t-s}(\xi)}\cF\k(\xi)ds\\
&=e^{-i\xi\cdot x}\cF\k(\xi)\int_0^t \overline{\cF G_{t-s}(\xi)}ds.
\end{align*}
For $t>0$ and $x\in \bR^d$, define $K_{t,x}\in L^2(\bR^d)$ by
\[
\cF K_{t,x}(\xi)
=
e^{-i\xi\cdot x}\cF\k(\xi)\overline{H_t(\xi)}.
\]
This is well defined since the Fourier transform is an isomorphism on $L^2(\bR^d)$ and
\[
\int_{\bR^d}|\cF K_{t,x}(\xi)|^2d\xi
=
(2\pi)^d\int_{\bR^d}|H_t(\xi)|^2\mu(d\xi)<\infty.
\]
We then set
\[
V(t,x):=L(K_{t,x}).
\]

We first show that $V$ is continuous in $L^2(\Omega)$. By the isometry of $L$,
\begin{align*}
\mathbb E|V(t,x)-V(s,y)|^2
&=m_2\|K_{t,x}-K_{s,y}\|_{L^2(\bR^d)}^2=m_2\int_{\bR^d}
\left|e^{-i\xi\cdot x}{H_t(\xi)}
-
e^{-i\xi\cdot y}{H_s(\xi)}
\right|^2\mu(d\xi).
\end{align*}
Hence
\begin{align*}
\mathbb E|V(t,x)-V(s,y)|^2
&\le 2m_2\int_{\bR^d}|H_t(\xi)-H_s(\xi)|^2\mu(d\xi)\\
&\quad +2m_2\int_{\bR^d}|e^{-i\xi\cdot x}-e^{-i\xi\cdot y}|^2|H_s(\xi)|^2\mu(d\xi).
\end{align*}
The first term converges to $0$ as $t\to s$ by the assumed continuity of $t\mapsto H_t$ in
$L^2(\mu)$. The second converges to $0$ as $x\to y$ by dominated convergence, since
\[
|e^{-i\xi\cdot x}-e^{-i\xi\cdot y}|^2|H_s(\xi)|^2\le 4|H_s(\xi)|^2
\]
and $|H_s|^2\in L^1(\mu)$. Therefore $V$ is continuous in $L^2(\Omega)$. In particular, $V$ is continuous in probability,
and hence admits a jointly measurable modification, which we denote again by $V$.
Moreover, for every $T>0$,
\[
\sup_{0\le t\le T,\ x\in\bR^d}\mathbb E|V(t,x)|^2
=
m_2\sup_{0\le t\le T}\int_{\bR^d}|H_t(\xi)|^2\mu(d\xi)<\infty,
\]
so $V$ is locally mean-square bounded.

Next, fix $t>0$ and $\phi\in \mathcal D(\bR^d)$. We claim that
\[
\mathbb E\left|\int_{\bR^d}V(t,x)\phi(x)dx\right|^2
=
\mathbb E\left(v^{(t)}(\phi)\int_{\bR^d}V(t,x)\phi(x)dx\right)
=
\mathbb E|v^{(t)}(\phi)|^2,
\]
and hence,
$\bE\left|
\int_{\bR^d}V(t,x)\phi(x)dx-v^{(t)}(\phi)
\right|^2=0$, which implies \eqref{v-V}.
Indeed,
\begin{align*}
\mathbb E\left| \int_{\bR^d}V(t,x)\phi(x)dx \right|^2
&= \int_{\bR^d}\int_{\bR^d} \mathbb E\left[V(t,x)V(t,y)\right]\phi(x)\phi(y)dxdy\\
&= \int_{\bR^d}\int_{\bR^d}
m_2\left(\int_{\bR^d}K_{t,x}(z)K_{t,y}(z)dz\right)\phi(x)\phi(y)dxdy\\
&= \frac{m_2}{(2\pi)^d}
\int_{\bR^d}\int_{\bR^d}
\left(\int_{\bR^d}\cF K_{t,x}(\xi)\overline{\cF K_{t,y}(\xi)}d\xi\right)
\phi(x)\phi(y)dxdy\\
&= \frac{m_2}{(2\pi)^d}
\int_{\bR^d}\int_{\bR^d}
\left(\int_{\bR^d}
e^{-i\xi\cdot(x-y)}
|\cF\k(\xi)|^2|H_t(\xi)|^2d\xi\right)
\phi(x)\phi(y)dxdy\\
&= \frac{m_2}{(2\pi)^d}
\int_{\bR^d}
|\cF\phi(\xi)|^2|\cF\k(\xi)|^2|H_t(\xi)|^2d\xi\\
&= m_2\int_{\bR^d}
|\cF\phi(\xi)|^2|H_t(\xi)|^2\mu(d\xi).
\end{align*}

Similarly,
\begin{align*}
\mathbb E\left[\left(\int_{\bR^d}V(t,x)\phi(x)dx\right)v^{(t)}(\phi)\right]
&= \int_{\bR^d}\phi(x)\mathbb E\big[V(t,x)v^{(t)}(\phi)\big]dx\\
&= \int_{\bR^d}\phi(x)\int_0^t
\mathbb E\big[V(t,x)\widetilde Y^{(t,\phi)}(s)\big]dsdx\\
&= m_2\int_{\bR^d}\phi(x)\int_0^t
\left(\int_{\bR^d}K_{t,x}(z)(\phi*\widetilde G_{t-s}*\k)(z)dz\right)dsdx\\
&= \frac{m_2}{(2\pi)^d}\int_{\bR^d}\phi(x)\int_0^t
\left(\int_{\bR^d}
\cF K_{t,x}(\xi)
\overline{\cF(\phi*\widetilde G_{t-s}*\k)(\xi)}
d\xi\right)dsdx\\
&= \frac{m_2}{(2\pi)^d}\int_{\bR^d}\phi(x)
\left(\int_{\bR^d}
e^{-i\xi\cdot x}\overline{\cF\phi(\xi)}
|\cF\k(\xi)|^2|H_t(\xi)|^2
d\xi\right)dx\\
&= \frac{m_2}{(2\pi)^d}\int_{\bR^d}
|\cF\phi(\xi)|^2|\cF\k(\xi)|^2|H_t(\xi)|^2d\xi\\
&= m_2\int_{\bR^d}
|\cF\phi(\xi)|^2|H_t(\xi)|^2\mu(d\xi).
\end{align*}

Finally,
\[
\mathbb E|v^{(t)}(\phi)|^2
=
m_2\int_{\bR^d}
|\cF\phi(\xi)|^2|H_t(\xi)|^2\mu(d\xi).
\]

Suppose now that the random-field solution $v$ of \eqref{lin-eq} exists and  $v(t,x)=L(J_{t,x})$ where
$J_{t,x}(y)=\int_0^t (G_{t-s}(x-\cdot)*\k)(y)ds$.
Then $J_{t,x}$ coincide with $K_{t,x}$ in $L^2(\bR^d)$, since they have the same Fourier transforms. Therefore
$v(t,x)=L(J_{t,x})=L(K_{t,x})=V(t,x)$ in $L^2(\Omega)$.
So $V$ is a modification of the random-field solution $v$.
\end{proof}

\begin{corollary}
Suppose that  (D) holds, and $\cL$ is the heat operator or the wave operator in dimension $d\geq 1$. Let $\{v^{(t)};t\geq 0\}$ and $\{v(t,x);t\geq 0,x\in \bR^d\}$ be the distribution solution, respectively the random field solution, of equation \eqref{lin-eq}. Then relation \eqref{v-V} holds for any $t>0$ and $\phi \in \cD(\bR^d)$, and $V$ is a modification of $v$.
\end{corollary}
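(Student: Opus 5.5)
To prove the corollary, I will apply the preceding theorem to the heat and wave operators. I need to check Hypotheses A and F, the two conditions \eqref{cond1}--\eqref{cond2}, and, for the last claim, that the random field solution exists and has representation \eqref{Fubini-v}.

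\textbf{Hypotheses A, F and the last claim.} Hypothesis A was already checked in the examples: $G_t\in\cS(\bR^d)$ for the heat operator, and $G_t\in\cE'(\bR^d)$ for the wave operator. For Hypothesis F, the maps $t\mapsto e^{-t|\xi|^2/2}$ and $t\mapsto \sin(t|\xi|)/|\xi|$ are continuous. They satisfy $|e^{-t|\xi|^2/2}|\le 1$ and $|\sin(t|\xi|)/|\xi||\le t$, so we may take $C_t=1$, respectively $C_t=t\vee 1$. Both choices are non-decreasing in $t$. For the last claim, Lemma \ref{lem-add} gives the random field solution under (D), and Corollary \ref{cor-Fub} gives representation \eqref{Fubini-v}.

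\textbf{Condition \eqref{cond1}.} It remains to verify \eqref{cond1} and \eqref{cond2}, and the key is an explicit pointwise bound of the form $|H_t(\xi)|^2\le C_T(1+|\xi|^2)^{-1}$ for $t\le T$. Once this bound holds, \eqref{cond1} follows directly from (D).
\begin{itemize}
\item For the heat operator, $H_t(\xi)=\int_0^t e^{-r|\xi|^2/2}dr=2(1-e^{-t|\xi|^2/2})/|\xi|^2$. This gives $0\le H_t(\xi)\le t\wedge 2|\xi|^{-2}$, hence $|H_t(\xi)|^2\le t\,(t\wedge 2|\xi|^{-2})\le C_T(1+|\xi|^2)^{-1}$.
\item For the wave operator, $H_t(\xi)=\int_0^t \sin(r|\xi|)/|\xi|\,dr=(1-\cos(t|\xi|))/|\xi|^2$. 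This gives $|H_t(\xi)|\le t^2/2$ (from $1-\cos u\le u^2/2$) and $|H_t(\xi)|\le 2|\xi|^{-2}$. Therefore $|H_t(\xi)|^2\le C_T(1\wedge|\xi|^{-4})\le C_T(1+|\xi|^2)^{-1}$.
\end{itemize}

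\textbf{Condition \eqref{cond2}.} By dominated convergence: for each $\xi$, $t\mapsto H_t(\xi)$ is continuous, being the integral of a bounded continuous integrand in $r$. On $[0,T]$, $|H_t-H_s|^2\le 4C_T(1+|\xi|^2)^{-1}$, which is $\mu$-integrable by (D).

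\textbf{Main obstacle.} The main issue is getting a uniform-in-$t$ bound integrable against $\mu$ for large $|\xi|$. This is settled by the explicit formulas above, which decay at least like $|\xi|^{-2}$ when squared against the weight $(1+|\xi|^2)^{-1}$. The remaining steps are direct applications of the preceding theorem.
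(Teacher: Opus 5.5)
Your proof is correct. The overall structure is the paper's: check Hypotheses A and F, check \eqref{cond1}--\eqref{cond2}, and use Lemma \ref{lem-add} and Corollary \ref{cor-Fub} to identify $V$ with $v$. You are in fact more explicit than the paper about Hypothesis F and about why the last part of the preceding theorem applies.

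The difference is in how you verify \eqref{cond1}--\eqref{cond2}. The paper never computes $H_t$. It writes $H_t(\xi)-H_s(\xi)=\int_s^t \cF G_r(\xi)\,dr$ and applies Cauchy--Schwarz, $|H_t(\xi)-H_s(\xi)|^2\le (t-s)\int_s^t|\cF G_r(\xi)|^2dr$. After integrating against $\mu$, both conditions follow from Hypothesis D, already established under (D) in Lemma \ref{lem-add}; take $s=0$ for \eqref{cond1}, since $H_0=0$. That argument has two advantages:
\begin{itemize}
\item it works for any operator satisfying Hypothesis D;
\item it is quantitative, giving $\int_{\bR^d}|H_t(\xi)-H_s(\xi)|^2\mu(d\xi)\le |t-s|\int_0^T\int_{\bR^d}|\cF G_r(\xi)|^2\mu(d\xi)dr$, whereas your dominated convergence step gives only qualitative continuity.
\end{itemize}

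Your route uses the closed forms of $H_t$, so it is specific to the heat and wave operators. In exchange it gives sharper pointwise information than you exploit. From $0\le H_t(\xi)\le C_T(1\wedge|\xi|^{-2})$ for $t\le T$ (valid in both cases), you actually get $|H_t(\xi)|^2\le C_T(1+|\xi|^2)^{-2}$ for a constant $C_T$. Hence \eqref{cond1}--\eqref{cond2}, and so the existence of $V$ satisfying \eqref{v-V}, would hold under the weaker condition $\int_{\bR^d}(1+|\xi|^2)^{-2}\mu(d\xi)<\infty$. Condition (D) is then needed only for the existence of the random field solution $v$ and the identification of $V$ with $v$.
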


\begin{proof} Hypothesis F clearly holds. Conditions \eqref{cond1} and \eqref{cond2} hold under (D),
since by Cauchy-Schwarz inequality, 
\[
|H_t(\xi)-H_s(\xi)|^2 \leq (t -s) \int_{s}^t |\cF G_r(\xi)|^2 dr \quad \mbox{for all} \ 0\leq s <t \leq T.
\]
\end{proof}

\section{Equations with multiplicative noise}
\label{section-mult}

In this section, we study equations with multiplicative noise, of the form:
\begin{equation}
\label{mult-eq}
\cL u(t,x)=u(t,x)\dot{X}(x) \quad t>0,x\in \bR^d,
\end{equation}
with initial condition 1. When $\cL$ is the heat operator, equation \eqref{mult-eq} is called the {\em parabolic Anderson model}, and when $\cL$ is the wave operator, equation \eqref{mult-eq} is called the {\em hyperbolic Anderson model}.  

Intuitively, a solution $u$ to \eqref{mult-eq} satisfies:
\begin{align*}
u(t,x)&=1+\int_0^t \int_{\bR^d}G_{t-s}(x-y)u(s,y)X(dy)ds\\
&=1+\int_0^t \int_{\bR^d} \big(G_{t-s}(x-\cdot)u(s,\cdot)*\k\big)(y) L(dy)ds\\
&=1+\int_0^t \int_{\bR^d \times \bR_0} \big(G_{t-s}(x-\cdot)u(s,\cdot)*\k \big)(y) z \widehat{N}(dy,dz)ds.
\end{align*}
Two problems arise immediately from this formulation: \\
(i) there is no stochastic integral with respect to $X$, $L$ or $\widehat{N}$ for {\em random} integrands since this noise does not have a time component, and there is no concept of ``predictability''; \\
(ii) $G_{t-s}(x-\cdot)u(s,\cdot)$ is a product between a distribution and a function. 

Therefore, the only tool that we have for studying equation \eqref{mult-eq} will be based on the Poisson-chaos expansion.

\subsection{Malliavin calculus preliminaries}

In this section, we include some basic material about Malliavin calculus with respect to the compensated Poisson random measure $\widehat{N}$.

We recall that $\fH=L^2(Z,\cZ,\fm)$, where $(Z,\cZ,\fm)$ is given by \eqref{def-Z}.
Then $\cH^{\otimes n}=L^2(Z^n,\cZ^{n},\fm^n)$. We let $\cH^{\odot n}$ be the set of symmetric functions in $\cH^{\otimes n}$.

\medskip

$\bullet$
Any random variable $F \in L^2(\Omega)$ which is $\cF^N$-measurable has the {\em Poisson-chaos expansion}:
\begin{equation}
\label{Poisson-chaos}
F=\bE(F)+\sum_{n\geq 1}I_n(f_n), \quad \mbox{for some $f_n \in \cH^{\odot n}$},
\end{equation}
where $I_n$ is the multiple integral with respect to $\widehat{N}$, and the series is orthogonal in $L^2(\Omega)$. 
For any $f \in \fH^{\otimes n}$, $I_n(f)=I_n(\widetilde{f})$,
\[
\bE[I_n(f)]=0 \quad \mbox{and} \quad \bE|I_n(f)|^2=n! \|\widetilde{f}\|_{\fH^{\otimes n}}^2,
\]
where $\widetilde{f}$ is the symmetrization of $f$, defined by:
\[
\widetilde{f}(\xi_1,\ldots,\xi_n)=\frac{1}{n!}\sum_{\rho \in S_n} f(\xi_{\rho(1)},\ldots,\xi_{\rho(n)}), \quad \mbox{for $\xi_1,\ldots,\xi_n \in {\bf Z}$},
\]
and $S_n$ is the set of permutations of $1,\ldots,n$. 

\medskip
$\bullet$
For any random variable $F \in L^2(\Omega)$ with chaos expansion \eqref{Poisson-chaos}, we define the {\em Malliavin derivative} of $F$ (with respect to $\widehat{N}$) by:
\[
D_{\xi}F=\sum_{n\geq 1}nI_{n-1}\big(f_n(\cdot,\xi)\big), \quad \mbox{for all} \quad \xi \in {\bf Z},
\]
provided that 
\[
\bE\|DF\|_{\fH}^2=\sum_{n\geq 1}nn! \|\widetilde{f}_n\|_{\fH^{\otimes n}}^2<\infty.
\] In this case, we write $F \in {\rm Dom}(D)$.

\medskip

$\bullet$
We denote by $\delta:{\rm Dom}(\delta)\to L^2(\Omega) $ the adjoint of $D$, where ${\rm Dom}(\delta)$ is the set of $V \in L^2(\Omega;\fH)$ for which there exists a constant $C=C_V>0$ depending on $V$, such that 
\[
\big|\bE \langle DF,V \rangle_{\fH}\big| \leq C \|F\|_2 \quad \mbox{for any $F \in {\rm Dom}(D)$}.
\] 
We say that $\delta(V)$ is 
{\em the Skorohod integral} of $V$ with respect to $\widehat{N}$, and we write
\[
\delta(V)=\int_{ \bR^d } \int_{\bR_0}V(x,z)\widehat{N}(\delta x, \delta z).
\]
By duality, for any $V \in {\rm Dom}(\delta)$,
\[
\bE\langle DF,V \rangle_{\fH}=\bE[F \delta(V)] \quad \mbox{for any $F \in {\rm Dom}(D)$}.
\]

\subsection{Existence of solution}

In this section, we study the existence of the solution of equation \eqref{mult-eq}.

\medskip

Assume that a solution $u$ exists. We proceed heuristically to derive the Poisson-chaos expansion of $u(t,x)$.
Formally,
\begin{align*}
u(t,x)&=1+\int_0^t \int_{\bR^d} \int_{\bR^d}G_{t-s_1}(x-y_1')u(s_1,y_1')\k(y_1-y_1')dy_1' L(dy_1)ds_1.
\end{align*}
We write
\[
u(s_1,y_1')=1+\int_0^{s_1} \int_{\bR^d}\int_{\bR^d} G_{s_1-s_2}(y_1'-y_2')u(s_2,y_2')\k(y_2-y_2')dy_2'L(dy_2)ds_2
\]
and we insert this into the equation above. We obtain:
\begin{align*}
& u(t,x)=1+\int_0^t \int_{\bR^d} \int_{\bR^d}G_{t-s_1}(x-y_1')\k(y_1-y_1')dy_1' L(dy_1)ds_1+\\
&\int_{0<s_2<s_1<t} \int_{(\bR^d)^4} G_{t-s_1}(x-y_1')G_{s_1-s_2}(y_1'-y_2') u(s_2,y_2') \k(y_1-y_1') \k(y_2-y_2') dy_1' dy_2'L(dy_1) L(dy_2)ds_1 ds_2.
\end{align*}
We replace $u(s_s,y_2')$, and we continue in this manner.
Denoting (still formally, since $G_t$ may be a distribution),
\[
f_n(t_1,x_1,\ldots,t_n,x_n,t,x)=G_{t-t_n}(x-x_n)\ldots G_{t_2-t_1}(x_2-x_1) 1_{\{0<t_1<\ldots<t_n<t\}},
\]
we arrive at:
\begin{align*}
u(t,x)&=1+\int_0^t \int_{\bR^d}\big(f_1(s_1,\cdot,t,x)*\k\big)(y_1)L(dy_1)ds_1+\\
& \quad \int_{0<s_2<s_1<t}  \int_{(\bR^d)^2}\big(f_2(s_2,\cdot,s_1,\cdot,t,x)*\k^{\otimes 2}\big)(y_2,y_1) L(dy_2)L(dy_1) ds_2 ds_1+\ldots
\end{align*}
and then we pass to the multiple integrals with respect to $\widehat{N}$.

\medskip

This heuristic argument motivates the following rigorous definition.

We denote $\pmb{t}=(t_1,\ldots,t_n)$ and  $\pmb{x}=(x_1,\ldots,x_n)$, and we define
\[
T_n(t)=\{\pmb{t} \in [0,t]^n; 0<t_1<\ldots<t_n<t\} \quad \mbox{and} \quad \k^{\otimes n}(\pmb{x})=\prod_{j=1}^n\k(x_j).
\] 
For any $(t,x) \in \bR_{+} \times \bR^d$ fixed, let
$f_n(t_1,\cdot,\ldots,t_n,\cdot,t,x)$ be the distribution in $\cS'(\bR^{nd})$ whose Fourier transform is:
\[
\cF f_n(t_1,\cdot,\ldots,t_n,\cdot,t,x)(\pmb{\xi})=e^{-i(\xi_1+\ldots+\xi_n)\cdot x} \prod_{j=1}^{n}\overline{\cF G_{t_{j+1}-t_j}(\xi_1+\ldots+\xi_j)},
\]
with $\pmb{\xi}=(\xi_1,\ldots,\xi_n) \in (\bR^d)^n$, and the convention
 $t_{n+1}=t$, $x_{n+1}=x$.

 In the case of the heat equation or wave equation with $d\leq 2$, $G_t$ is a function, and $f_n(t_1,\cdot,\ldots,t_n,\cdot,t,x)$ is a function given by:
\begin{equation}
\label{def-fn-func}
f_n(t_1,x_1,\ldots,t_n,x_n,t,x)=\prod_{j=1}^{n}G_{t_{j+1}-t_j}(x_{j+1}-x_j), \quad \mbox{for  $\pmb{x}\in (\bR^d)^n$},
\end{equation}
and 
\[
\big(f_n(t_1,\cdot,\ldots,t_n,\cdot,t,x)* \k^{\otimes n}\big)(\pmb{x}) =\int_{(\bR^d)^n}\prod_{j=1}^{n}G_{t_{j+1}-t_j}(y_{j+1}-y_j) \k(x_j-y_j) d\pmb{y}.
\]
 \medskip
 
We will need the following stronger forms of Hypotheses A, B, D and E.

\medskip

\noindent {\bf Hypothesis A'.} For any $(t,x)\in \bR_{+} \times  \bR^d$, $n\geq 1$ and $\pmb{t} \in T_n(t)$,
\[
f_n(t_1,\cdot,\ldots,t_n,\cdot,t,x) \in \cO_{C}'(\bR^{nd}).
\]

\medskip

\noindent {\bf Hypothesis B'.} For any $t>0$,
$K(t):= \sup_{\eta \in \bR^d} \int_{\bR^d}|\cF G_t(\xi+\eta)|^2 \mu(d\xi)<\infty$.\\

\medskip

\noindent {\bf Hypothesis D'.} The map $(t,\xi)\mapsto \cF G_t(\xi)$ is measurable on $\bR_{+} \times \bR^d$, and 
\begin{equation}
\label{def-AT}
A_T:=\int_0^T \sup_{\eta \in \bR^d} \int_{\bR^d}|\cF G_t(\xi+\eta)|^2 \mu(d\xi)dt<\infty \quad \mbox{for any $T>0$}.
\end{equation}

\medskip

\noindent {\bf Hypothesis E'.}
For any $(t,x)\in \bR_{+} \times \bR^d$ and $n\geq 1$, either one of the following conditions holds:\\
(i) the map $({\pmb{t}},\pmb{x}) \mapsto \big(f_n(t_1,\cdot,\ldots,t_n,\cdot,t,x)* \k^{\otimes n}\big)(\pmb{x}) $ is measurable on $T_n(t) \times \bR^{nd}$; \\
(ii) there exists a measurable function $g_n:T_n(t) \times \bR^{nd} \to \bR$ depending on $(t,x,n)$, such that 
\begin{equation}
\label{G*k-meas}
g_n({\pmb{t}},\cdot)= f_n(t_1,\cdot,\ldots,t_n,\cdot,t,x)* \k^{\otimes n} \ \mbox{in $L^2(\bR^{nd})$, \  for almost all $\pmb{t} \in T_n(t)$}.
\end{equation}
We identify $f_n(t_1,\cdot,\ldots,t_n,\cdot,t,x)* \k^{\otimes n}$ with $g_n(\pmb{t},\cdot)$, and we write $f_n(t_1,\cdot,\ldots,t_n,\cdot,t,x)*\k^{\otimes n}$ instead of $g_n(\pmb{t},\cdot)$.

\medskip

For any $(x_1,z_1), \ldots,(x_n,z_n) \in {\bf Z}$, we denote
\[
f_n^{*}(x_1,z_1,\ldots,x_n,z_n,x;t):=\prod_{i=1}^{n}z_i \int_{T_n(t)} \big(f_n(t_1,\cdot,\ldots,t_n,\cdot,t,x)*\k^{\otimes n}\big)(\pmb{x}) d\pmb{t},
\]

By Hypothesis E' and Fubini theorem, the integral above is well-defined (but may be infinite), and  $f_n^*(\cdot, x;t)$ is measurable on ${\bf Z}^n$.

\medskip

We introduce now the definition of the solution to equation \eqref{mult-eq}.

\begin{definition}
{\rm The process $\{u(t,x);t\geq 0,x\in \bR^d\}$ with $\bE|u(t,x)|^2<\infty$ for all $(t,x) \in \bR_{+} \times \bR^d$ and the Poisson-chaos expansion:
\begin{equation}
\label{def-u-m}
u(t,x)=1+\sum_{n\geq 1}I_n\big(f_n^{*}(\cdot,x;t)\big),
\end{equation}
is the {\bf (Skorohod) solution} of equation \eqref{mult-eq}, provided that $f_n^{*}(\cdot,x;t)\in \cH^{\otimes n}$ for any $(t,x)\in \bR_{+}\times \bR^d$ and $n\geq 1$, and the series converges in $L^2(\Omega)$.
}
\end{definition}

We denote
\begin{equation}
\label{def-Jn}
J_n(t)=\int_{T_n(t)}  \int_{(\bR^d)^n}  \prod_{j=1}^{n}|\cF G_{t_{j+1}-t_j}(\xi_1+\ldots+\xi_j)|^2 \mu(d\xi_1) \ldots \mu(d\xi_n) d\pmb{t}.
\end{equation}

\begin{theorem}
Suppose that Hypotheses A', B', D' and E' hold. Then $f_n^{*}(\cdot,x;t)\in \cH^{\otimes n}$ for any $n\geq 1$ and $(t,x)\in \bR_{+}\times \bR^d$. If in addition,
\begin{equation}
\label{sum-Jn}
\sum_{n\geq 1}m_2^n t^n J_n(t)<\infty \quad \mbox{for any $t>0$},
\end{equation}
then equation \eqref{mult-eq} has a unique solution.
\end{theorem}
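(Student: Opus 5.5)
The plan is to bound the $\cH^{\otimes n}$-norm of $f_n^{*}(\cdot,x;t)$ directly. Since the dependence on $z_i$ is the product $\prod_i z_i$, the $\nu$-integrals factor out immediately:
\[
\|f_n^{*}(\cdot,x;t)\|_{\cH^{\otimes n}}^2=m_2^n\int_{(\bR^d)^n}\left|\int_{T_n(t)}\big(f_n(t_1,\cdot,\ldots,t_n,\cdot,t,x)*\k^{\otimes n}\big)(\pmb{x})\,d\pmb{t}\right|^2 d\pmb{x}.
\]
The inner integral is measurable by Hypothesis E'. By Cauchy--Schwarz in $\pmb{t}$, using $|T_n(t)|=t^n/n!$, and then Fubini, this is at most
\[
m_2^n\frac{t^n}{n!}\int_{T_n(t)}\big\|f_n(t_1,\cdot,\ldots,t_n,\cdot,t,x)*\k^{\otimes n}\big\|_{L^2(\bR^{nd})}^2\,d\pmb{t}.
\]

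For fixed $\pmb{t}\in T_n(t)$, we compute the $L^2$ norm exactly as in Lemma \ref{S-lem}, now in dimension $nd$. By Hypothesis A', $f_n(\pmb{t},\cdot,t,x)\in\cO_C'(\bR^{nd})$, and $\k^{\otimes n}$ is tempered. Theorems \ref{GF1} and \ref{GF2} then give
\[
\cF\big(f_n*\k^{\otimes n}\big)=\cF f_n\prod_j\cF\k(\xi_j).
\]
So, provided this Fourier transform is in $L^2$, Lemma A.1 of \cite{BJ26} and Plancherel give
\[
\|f_n*\k^{\otimes n}\|_{L^2}^2=\int_{(\bR^d)^n}\prod_{j=1}^n|\cF G_{t_{j+1}-t_j}(\xi_1+\cdots+\xi_j)|^2\mu(d\xi_1)\cdots\mu(d\xi_n).
\]
To see that this is finite, integrate in the order $\xi_n,\xi_{n-1},\ldots,\xi_1$. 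The variable $\xi_n$ appears only in the $j=n$ factor, through $\xi_n+\eta$ with $\eta=\xi_1+\cdots+\xi_{n-1}$, so that integral is bounded by $K(t-t_n)$. Iterating gives the bound $\prod_{j}K(t_{j+1}-t_j)$, which is finite by Hypothesis B'. Integrating over $T_n(t)$ then yields exactly $J_n(t)$. To show $J_n(t)<\infty$, apply the same iterated sup-bound: $J_n(t)\le\int_{T_n(t)}\prod_j K(t_{j+1}-t_j)\,d\pmb{t}$. Bounding each gap integral by $\int_0^t K(s)\,ds=A_t$ (Hypothesis D') gives $J_n(t)\le A_t^n<\infty$. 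Hence
\[
\|f_n^*(\cdot,x;t)\|^2_{\cH^{\otimes n}}\le m_2^n\frac{t^n}{n!}J_n(t)<\infty,
\]
which proves the first claim.

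For existence, note that $I_n(f)=I_n(\widetilde f)$ and $\|\widetilde f\|\le\|f\|$. Hence $\bE|I_n(f_n^*)|^2\le n!\,\|f_n^*\|^2_{\cH^{\otimes n}}\le m_2^nt^nJ_n(t)$. The chaoses are orthogonal, so condition \eqref{sum-Jn} gives convergence of the series \eqref{def-u-m} in $L^2(\Omega)$ and $\bE|u(t,x)|^2<\infty$. Uniqueness is immediate, because the definition of solution prescribes the chaos kernels, and the Poisson-chaos expansion of an $L^2$ functional is unique.

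The main obstacle is rigor in the Fourier step. Since $f_n$ may be a distribution (the wave equation with $d\ge3$), $\cF f_n\prod_j\cF\k(\xi_j)$ being square integrable must be used to place $f_n*\k^{\otimes n}$ in $L^2$. The sequential sup-integration must also be justified by Tonelli on nonnegative integrands. Finally, Hypothesis E' must be used to make the $d\pmb{t}$-integration and Fubini legitimate for a.e. $\pmb{t}$.
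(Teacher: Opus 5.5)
Your proposal is correct and follows the paper's proof essentially step for step. You factor out $m_2^n$, apply Cauchy--Schwarz over $T_n(t)$, and use Theorems \ref{GF1}, \ref{GF2}, Lemma A.1 of \cite{BJ26} and Plancherel; then you take the iterated bound by $\prod_j K(t_{j+1}-t_j)$ to get $J_n(t)\le A_t^n$, and conclude via orthogonality of chaoses with $\|\widetilde f\|\le\|f\|$. Your explicit remarks on uniqueness of the chaos expansion and on the order of the sup-integrations only spell out what the paper leaves implicit.
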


\begin{proof}
{\em Step 1.} In this step, we show that 
$f_n^{*}(\cdot,x;t)\in \cH^{\otimes n}$ for any $n\geq 1$. 

 By Theorems \ref{GF1} and \ref{GF2}, $f_n(t_1,\cdot,\ldots, t_n,\cdot,t,x)*\k^{\otimes n}\in \cS'(\bR^{nd})$ and
\[
\cF \big(f_n(t_1,\cdot,\ldots,t_n,\cdot, t,x)*\k^{\otimes n}\big)(\pmb{\xi})=\cF f_n(t_1,\cdot,\ldots,t_n,\cdot,t,x)(\pmb{\xi}) \prod_{i=1}^{n}\cF \k(\xi_i),
\]
for any $\pmb{\xi}\in (\bR^d)^n$. By definition \eqref{def-mu} of $\mu$ and Hypothesis B', for all  $\pmb{t} \in T_n(t)$,
\begin{align}
\nonumber
& \int_{(\bR^d)^n} \Big|\cF \big(f_n(t_1,\cdot,\ldots,t_n,\cdot,t,x)*\k^{\otimes n}\big)(\pmb{\xi})\Big|^2 d\pmb{\xi}\\
\nonumber
& \quad =(2\pi)^{nd}\int_{(\bR^d)^n} \prod_{j=1}^{n}|\cF G_{t_{j+1}-t_j}(\xi_1+\ldots+\xi_j)|^2 \mu(d\xi_1)\ldots \mu(d\xi_n)\\
\label{Jn1}
& \quad \leq (2\pi)^{nd} K(t-t_n) K(t_n-t_{n-1}) \ldots K(t_2-t_1)<\infty,
\end{align}
and hence,
$\cF \big(f_n(t_1,\cdot,\ldots,t_n,\cdot,t,x)*\k^{\otimes n}\big) \in L^2(\bR^{nd})$. By Lemma A.1 of \cite{BJ26}, it follows that $f_n(t_1,\cdot,\ldots,t_n,\cdot,t,x)*\k^{\otimes n} \in L^2(\bR^{nd})$ for all $\pmb{t} \in T_n(t)$. By Cauchy-Schwarz inequality and Plancherel's theorem,
\begin{align*}
& \|f_n^*(\cdot,x;t)\|_{\cH^{\otimes n}}^2  = \int_{(\bR^d \times \bR_0)^n} \big|f_n^*(x_1,z_1,\ldots,x_n,z_n,x;t)\big|^2 d\pmb{x} \nu(dz_1) \ldots \nu(dz_n)\\
& \quad  =m_2^n\int_{(\bR^{d})^n} \left(\int_{T_n(t)} \big(f_n(t_1,\cdot,\ldots,t_n,\cdot,t,x)*\k^{\otimes n}\big)(\pmb{x}) d\pmb{t}\right)^2 d\pmb{x}\\
& \quad \leq  \frac{m_2^n t^n}{n!} \int_{T_n(t)}  \int_{(\bR^{d})^n}\big| \big(f_n(t_1,\cdot,\ldots,t_n,\cdot,t,x)*\k^{\otimes n}\big) (\pmb{x}) \big|^2 d\pmb{x} d\pmb{t} \\
&  \quad   = \frac{m_2^n t^n}{n!} \frac{1}{(2\pi)^{nd}}\int_{T_n(t)} \int_{(\bR^{d})^n} \Big|\cF \big(f_n(t_1,\cdot,\ldots,t_n,\cdot,t,x)(\pmb{\xi})\Big|^2 \prod_{j=1}^{n}|\cF\k(\xi_j)|^2 d\pmb{\xi}d\pmb{t} \\
& \quad =\frac{ m_2^n  t^n}{n!} J_n(t).
\end{align*}

Note that by \eqref{Jn1},
\[
J_n(t) \leq  \int_{T_n(t)} K(t-t_n) K(t_n-t_{n-1}) \ldots K(t_2-t_1)  d\pmb{t}  \leq
 A_t^n<\infty.
\]
This proves that $f_n^*(\cdot,x;t) \in \cH^{\otimes n}$.

\medskip

{\em Step 2.} In this step, we prove that the series \eqref{def-u-m} converges in $L^2(\Omega)$, for any $(t,x)\in \bR_{+} \times \bR^d$.
Using the fact that $\|\widetilde{f}\|_{\cH^{\otimes n}} \leq \|f\|_{\cH^{\otimes n}}$, we see that
\[
\sum_{n\geq 1}\bE\big|I_n\big(f_n^*(\cdot,x;t)\big)\big|^2 =\sum_{n\geq 1}n! \|\widetilde{f}_n^*(\cdot,x;t)\|_{\cH^{\otimes n}}^2 \leq \sum_{n\geq 1}n! \|f_n^*(\cdot,x;t)\|_{\cH^{\otimes n}}^2 \leq \sum_{n\geq 1} m_2^n t^n J_n(t).
\]

\end{proof}

Next, we examine the existence of the solution for the heat and wave equations. We start by establishing for validity of Hypotheses A', B', D' and E'. 

\begin{lemma}
Hypothesis A' holds for the fundamental solutions of the heat and wave equations, for any $d\geq 1$.
\end{lemma}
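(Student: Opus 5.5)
The plan is to exhibit $f_n(t_1,\cdot,\ldots,t_n,\cdot,t,x)$ as the image of a tensor product of distributions in $\cO_C'(\bR^d)$ under an invertible linear change of variables and a translation, and to use the stability properties of $\cO_C'$ recalled in Appendix \ref{app-distr}. The Fourier transform gives the cleanest route. By Theorem A.2, $S\in\cO_C'(\bR^{m})$ if and only if $\cF S\in\cO_M(\bR^{m})$. So it suffices to show that
\[
\pmb{\xi}\mapsto e^{-i(\xi_1+\ldots+\xi_n)\cdot x}\prod_{j=1}^n\overline{\cF G_{t_{j+1}-t_j}(\xi_1+\ldots+\xi_j)}
\]
is a $C^\infty$ function on $\bR^{nd}$, all of whose derivatives grow at most polynomially.

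First, Hypothesis A holds for both the heat and wave operators. This was established earlier: $G_s\in\cS(\bR^d)$ for heat, and $G_s\in\cE'(\bR^d)$ for wave. Hence $\cF G_s\in\cO_M(\bR^d)$ for every $s>0$. One can also see this directly, since $e^{-s|\xi|^2/2}$ and $\sin(s|\xi|)/|\xi|$ are entire in $\xi$ (the latter being a power series in $|\xi|^2$) and have polynomially bounded derivatives. Complex conjugation preserves $\cO_M$. Next, $\cO_M$ is stable under composition with linear maps. For each $j$, the map $\pmb{\xi}\mapsto\xi_1+\ldots+\xi_j$ is linear from $\bR^{nd}$ to $\bR^d$, so each factor $\pmb{\xi}\mapsto\overline{\cF G_{t_{j+1}-t_j}(\xi_1+\ldots+\xi_j)}$ lies in $\cO_M(\bR^{nd})$. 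This uses the chain rule together with the bound $1+|\xi_1+\ldots+\xi_j|^2\le C(1+|\pmb{\xi}|^2)$. The factor $e^{-i(\xi_1+\ldots+\xi_n)\cdot x}$ is smooth with bounded derivatives of polynomial size in $|x|$, so it also lies in $\cO_M$. Finally, $\cO_M$ is an algebra under pointwise multiplication by the Leibniz rule, so the full product lies in $\cO_M(\bR^{nd})$.

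Because $\cF f_n\in\cO_M(\bR^{nd})\subset\cS'(\bR^{nd})$, the distribution $f_n$ is well defined, and Theorem A.2 (the converse direction $\cF^{-1}(\cO_M)=\cO_C'$) gives $f_n(t_1,\cdot,\ldots,t_n,\cdot,t,x)\in\cO_C'(\bR^{nd})$. Here all increments $t_{j+1}-t_j$ are strictly positive, since $\pmb{t}\in T_n(t)$.

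I expect the only delicate step to be verifying that the appendix's Theorem A.2 is stated as an equivalence and in arbitrary dimension $m=nd$, so that it can be applied to $\bR^{nd}$. If only one direction is available, I would instead argue in physical space. The tensor product $G_{s_1}\otimes\cdots\otimes G_{s_n}$ lies in $\cO_C'(\bR^{nd})$, and $f_n$ is its pullback under the invertible affine map $(x_1,\ldots,x_n)\mapsto(x_2-x_1,\ldots,x-x_n)$, which preserves rapid decrease. For the wave equation this alternative is even simpler, since all factors have compact support and therefore $f_n\in\cE'(\bR^{nd})\subset\cO_C'(\bR^{nd})$.
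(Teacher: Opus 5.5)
Your proof is correct, but it takes a different route from the paper's.

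\textbf{The paper's route.} The paper argues in physical space, case by case:
\begin{itemize}
\item For the heat equation, $f_n(t_1,\cdot,\ldots,t_n,\cdot,t,x)$ is a product of Gaussians composed with the invertible affine map $\pmb{x}\mapsto(x_2-x_1,\ldots,x-x_n)$. It therefore lies in $\cS(\bR^{nd})$, and Lemma \ref{SO-lem} applies.
\item For the wave equation, $f_n$ has compact support. For $d\le 2$ it is an integrable compactly supported function. For $d\ge 3$ it is the pullback of a tensor product of elements of $\cE'$. In both cases $f_n\in\cE'(\bR^{nd})\subset\cO_C'(\bR^{nd})$.
\end{itemize}

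\textbf{Your route.} You work on the Fourier side and the argument is uniform. Since $\cF f_n$ is a product of $\cO_M$ functions composed with linear maps, times a harmless exponential, you show more than the lemma asks: Hypothesis A alone implies Hypothesis A$'$ for any operator $\cL$, with no case distinctions.

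\textbf{What your route costs.} It needs the converse half of Schwartz's Theorem XV, namely that every element of $\cO_M$ is the Fourier transform of an element of $\cO_C'$. That statement is true: Schwartz proves $\cF$ is a bijection of $\cO_C'$ onto $\cO_M$. However, it is not among the facts recorded in the appendix, since Theorem \ref{GF2} as stated only gives $\cF(\cO_C')\subset\cO_M$. So you must cite the full theorem from the source rather than write "by Theorem A.2". The paper's route uses only the inclusions $\cS\subset\cO_C'$ and $\cE'\subset\cO_C'$, which are already available.

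\textbf{Your fallback.} For the wave equation it coincides with the paper's argument, and it covers all $d$ at once. For the heat equation, the cleanest fallback is to observe directly that $f_n\in\cS(\bR^{nd})$. Going through tensor products of general $\cO_C'$ distributions would require you to justify separately that $\cO_C'$ is stable under tensor products and affine pullbacks.
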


\begin{proof}
In the case of the heat equation, the function $f_n(t_1,\cdot,\ldots,t_n,\cdot,t,x)\in \cS(\bR^{nd})$ and Hypothesis A' follows by Lemma \ref{SO-lem}.

In the case of the wave equation with $d\leq 2$, $f_n(t_1,\cdot,\ldots,t_n,\cdot,t,x)$ is an integrable function with compact support. This function induces a distribution in $\cE'(\bR^{nd}) \subset \cO_{C}'(\bR^{nd})$.

 If $d\geq 3$, $G_t\in \cE'(\bR^d)$ for any $t>0$. Let $(t,x)\in \bR_+\times \bR^d$, $n\geq 1$ and
$\bt=(t_1,\ldots,t_n)\in T_n(t)$. Then the tensor product
\[
G_{t_2-t_1}\otimes \cdots \otimes G_{t_{n+1}-t_n}\in \cE'(\bR^{nd}).
\]
By definition, $f_n(t_1,\cdot,\ldots,t_n,\cdot,t,x)$ is obtained from the distribution 
$ G_{t_2-t_1}\otimes \cdots \otimes G_{t_{n+1}-t_n}$ by the change of variables
\[
(x_1,\ldots,x_n)\mapsto (x_2-x_1,\ldots,x_n-x_{n-1},x-x_n).
\]
It follows that $f_n(t_1,\cdot,\ldots,t_n,\cdot,t,x)\in \cE'(\bR^{nd})\subset \mathcal O_C'(\bR^{nd})$.

\end{proof}

\begin{lemma}
\label{lem-hypB'}
Hypotheses B' and D' hold for the fundamental solutions of the heat and wave equations with $d\geq 1$, provided that (D) holds.
\end{lemma}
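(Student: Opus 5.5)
The plan is to reduce the supremum over $\eta$ to the case $\eta=0$, using the standard fact (as in Dalang's work) that for heat and wave kernels the quantity $\int_{\bR^d}|\cF G_t(\xi+\eta)|^2\mu(d\xi)$ is controlled uniformly in $\eta$ by an integral of $(1+|\xi|^2)^{-1}$ against $\mu$. Measurability of $(t,\xi)\mapsto \cF G_t(\xi)$ is immediate in both cases, since $e^{-t|\xi|^2/2}$ and $\sin(t|\xi|)/|\xi|$ are continuous in $(t,\xi)$ away from $\xi=0$. Given Hypothesis B', Hypothesis D' then follows by integrating the resulting bound $K(t)\le C(t)$ over $[0,T]$, once we check that $C(t)$ is integrable near $0$.

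\textbf{Key lemma.} I will use (or reprove) the estimate: for any nonnegative tempered measure $\mu$ that is the Fourier transform of a nonnegative function $f$ (guaranteed by Assumption A1(c), since $f=\k*\widetilde{\k}\ge 0$),
\[
\sup_{\eta\in\bR^d}\int_{\bR^d}\frac{1}{1+|\xi+\eta|^2}\,\mu(d\xi)=\int_{\bR^d}\frac{1}{1+|\xi|^2}\,\mu(d\xi).
\]
The proof goes as follows.
\begin{itemize}
\item Write $(1+|\xi|^2)^{-1}=\cF g(\xi)$, where $g=\int_0^\infty e^{-r}H_{d,2r}\,dr$ is a Bessel-type kernel, so $g\ge 0$.
\item Then $\int (1+|\xi+\eta|^2)^{-1}\mu(d\xi)=\int_{\bR^d} f(x)g(x)e^{i\eta\cdot x}dx$, by the Parseval-type identity behind \eqref{Four-h2}.
\item Bound the modulus by $\int f g=\int(1+|\xi|^2)^{-1}\mu(d\xi)$, using $f,g\ge0$.
\end{itemize}
To avoid distributional subtleties I would first prove the identity with $e^{-\e|\xi|^2}$ damping (Gaussian factors are Schwartz functions) and then let $\e\downarrow0$ by monotone convergence. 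The same argument with $g$ replaced by $H_{d,2t}$ (whose Fourier transform is $e^{-t|\xi|^2}$) gives directly $\sup_\eta\int e^{-t|\xi+\eta|^2}\mu(d\xi)=\int e^{-t|\xi|^2}\mu(d\xi)$.

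\textbf{Heat equation.} By the Gaussian version of the lemma, $K(t)=\int e^{-t|\xi|^2}\mu(d\xi)$, which is finite under (D) by the computation in Lemma \ref{lem-add}(i). Integrating in $t$ and using Fubini gives
\[
A_T=\int_{\bR^d}\int_0^T e^{-t|\xi|^2}\,dt\,\mu(d\xi)\le C_T^{(2)}\int\frac{\mu(d\xi)}{1+|\xi|^2}<\infty.
\]

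\textbf{Wave equation.} I bound $\frac{\sin^2(t|\xi|)}{|\xi|^2}\le C(1+t^2)\frac{1}{1+|\xi|^2}$, splitting into $|\xi|\le1$ (bound $t^2$) and $|\xi|>1$ (bound $2/(1+|\xi|^2)$). Applying the key lemma then gives
\[
K(t)\le C(1+t^2)\int\frac{\mu(d\xi)}{1+|\xi|^2},
\]
which is finite and locally bounded, so $A_T<\infty$.

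The main obstacle is rigorously justifying the key lemma when $f$ is merely a nonnegative tempered function and $\mu$ is infinite. In particular, I need to check that the identity $\int\phi\,d\mu=\int f\,\cF\phi$ extends from Schwartz $\phi$ to $\phi(\xi)=(1+|\xi+\eta|^2)^{-1}$ (modulated), which requires the damping and limiting argument described above.
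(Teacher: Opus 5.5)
Your proposal is correct and follows essentially the same route as the paper. For the heat equation, both arguments use the Gaussian maximum principle $\sup_\eta\int e^{-t|\xi+\eta|^2}\mu(d\xi)=\int e^{-t|\xi|^2}\mu(d\xi)$ together with Tonelli and the bound $\int_0^T e^{-t|\xi|^2}dt\le C_T^{(2)}(1+|\xi|^2)^{-1}$. For the wave equation, both use $\sin^2(t|\xi|)/|\xi|^2\le 2(t^2\vee 1)(1+|\xi|^2)^{-1}$ combined with the maximum principle for $(1+|\xi|^2)^{-1}$. The only difference is that the paper cites or asserts these two maximum principles, while you prove them from $f\ge 0$, the nonnegativity of the Gaussian and Bessel kernels, and Gaussian damping with monotone convergence; that justification is valid.
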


\begin{proof}
In the case of the heat equation, it is known that for any $t>0$,
\begin{equation}
\label{max-heat}
K(t)=\sup_{\eta \in \bR^d} \int_{\bR^d} e^{-t|\xi+\eta|^2}\mu(d\xi)= \int_{\bR^d} e^{-t|\xi|^2}\mu(d\xi),
\end{equation}
(see e.g. relation (13) of \cite{BY23}), and  $A_T = \int_0^T \int_{\bR^d} e^{-t|\xi|^2}\mu(d\xi)dt<\infty$, by Lemma \ref{lem-add}. 

Consider next the wave equation.  Note that
\begin{equation}
\label{bound-Gw}
|\cF G_t(\xi)|^2 =\frac{\sin^2(t|\xi|)}{|\xi|^2} \leq D_t \frac{1}{1+|\xi|^2} \quad \mbox{with $D_t=2(t^2 \vee 1)$},
\end{equation}
and
\begin{equation}
\label{max-principle}
\sup_{\eta \in \bR^d}\int_{\bR^d}\frac{1}{1+|\xi+\eta|^2}\mu(d\xi)=\int_{\bR^d}\frac{1}{1+|\xi|^2}\mu(d\xi)=:C_{\mu}.
\end{equation}
Hence, $K(t) \leq D_t C_{\mu}$ for any $t>0$, and
\[
A_T \leq D_T \int_0^T \sup_{\eta \in \bR^d}\int_{\bR^d}\frac{1}{1+|\xi+\eta|^2}\mu(d\xi)dt=T D_T C_{\mu}.
\]
\end{proof}

\begin{lemma}
a) Hypothesis E'.(i) holds for the fundamental solution of the heat equation with $d\geq 1$, and for the fundamental solution of the wave equation with $d\leq 2$.\\
b) Hypothesis E'.(ii) holds for the fundamental solution of the wave equation with $d\geq 3$, provided that (D) holds.
\end{lemma}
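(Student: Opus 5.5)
The plan is to follow the proof of the earlier lemma verifying Hypothesis E, replacing the single time variable by $\pmb{t}\in T_n(t)$ and $\bR^d$ by $\bR^{nd}$. Fix $(t,x)$ and $n\geq 1$.

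\emph{Part a).} For the heat equation, and for the wave equation with $d\leq 2$, $G_s$ is a nonnegative function and $(s,y)\mapsto G_s(y)$ is jointly measurable on $(0,\infty)\times\bR^d$. By \eqref{def-fn-func}, the map $(\pmb{t},\pmb{y})\mapsto \prod_{j=1}^n G_{t_{j+1}-t_j}(y_{j+1}-y_j)$ is therefore measurable on $T_n(t)\times\bR^{nd}$, with $y_{n+1}=x$. Since $\k\geq 0$, the integrand
\[
(\pmb{t},\pmb{x},\pmb{y})\mapsto \prod_{j=1}^{n}G_{t_{j+1}-t_j}(y_{j+1}-y_j)\k(x_j-y_j)
\]
is nonnegative and measurable. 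Tonelli's theorem then shows that
\[
(\pmb{t},\pmb{x})\mapsto \big(f_n(t_1,\cdot,\ldots,t_n,\cdot,t,x)*\k^{\otimes n}\big)(\pmb{x})
\]
is measurable, possibly with values in $[0,\infty]$. This is Hypothesis E'.(i).

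\emph{Part b).} For the wave equation with $d\geq 3$, consider the map $F:T_n(t)\to L^2(\bR^{nd})$ defined by
\[
F(\pmb{t})=f_n(t_1,\cdot,\ldots,t_n,\cdot,t,x)*\k^{\otimes n}.
\]

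\begin{itemize}
\item \emph{Well-definedness and integrability.} By Step 1 of the preceding theorem, which uses Hypotheses A' and B' (valid under (D) by the two previous lemmas), $F(\pmb{t})\in L^2(\bR^{nd})$. Moreover,
\[
\|F(\pmb{t})\|^2_{L^2}\leq (2\pi)^{nd}\prod_{j} K(t_{j+1}-t_j)\leq (2\pi)^{nd}(D_tC_\mu)^n,
\]
using \eqref{bound-Gw} and \eqref{max-principle}. So $F$ is bounded on $T_n(t)$, and hence $\int_{T_n(t)}\|F(\pmb{t})\|_{L^2}\,d\pmb{t}<\infty$.

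\item \emph{Continuity.} The main step is to show that $F$ is continuous. If $\pmb{t}^{(k)}\to\pmb{t}$ in $T_n(t)$, Plancherel's theorem gives
\[
\|F(\pmb{t}^{(k)})-F(\pmb{t})\|^2_{L^2}=\int_{(\bR^d)^n}\Big|\prod_{j}\overline{\cF G_{t^{(k)}_{j+1}-t^{(k)}_j}(\xi_1+\cdots+\xi_j)}-\prod_j\overline{\cF G_{t_{j+1}-t_j}(\xi_1+\cdots+\xi_j)}\Big|^2\mu(d\xi_1)\cdots\mu(d\xi_n).
\]
Here the phase $e^{-i(\xi_1+\cdots+\xi_n)\cdot x}$ cancels. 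The integrand tends to $0$ pointwise because $s\mapsto \sin(s|\xi|)/|\xi|$ is continuous. It is dominated by
\[
4\prod_j D_t\,(1+|\xi_1+\cdots+\xi_j|^2)^{-1}.
\]
Integrating this bound iteratively in $\xi_n,\xi_{n-1},\ldots,\xi_1$ and applying \eqref{max-principle} at each step gives the finite value $4(D_tC_\mu)^n$, so dominated convergence applies. The one point needing care is that the domination must hold for the full product, not for each factor separately. The uniform bound \eqref{bound-Gw}, with $D_t$ non-decreasing in $t$ and all time differences at most $t$, handles this.

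\item \emph{Measurable representative.} Since $F$ is continuous, it is Borel measurable. Because $L^2(\bR^{nd})$ is separable, $F$ is strongly measurable (Corollary 1.1.10 and Proposition 1.1.16 of \cite{HNVW}). With the integrability above, $F$ is Bochner integrable (Proposition 1.2.2, ibid.). Proposition 1.2.25 of \cite{HNVW}, applied with $S=T_n(t)$ and the target space $\bR^{nd}$, then yields a jointly measurable $g_n:T_n(t)\times\bR^{nd}\to\bR$ with $g_n(\pmb{t},\cdot)=F(\pmb{t})$ in $L^2(\bR^{nd})$ for almost every $\pmb{t}$. This is Hypothesis E'.(ii).
\end{itemize}
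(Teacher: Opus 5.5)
Your proposal is correct and follows the paper's proof: Fubini/Tonelli for the jointly measurable case a), and for b) continuity of $F$ into $L^2(\bR^{nd})$ by dominated convergence, followed by strong measurability, Bochner integrability and Proposition 1.2.25. The one difference is in the domination step, where you combine $|a-b|^2\leq 2|a|^2+2|b|^2$ with \eqref{bound-Gw}; this is simpler than the paper's telescoping product inequality plus the increment estimate $|\cF G_{t+h}(\xi)-\cF G_t(\xi)|^2\leq 8/(1+|\xi|^2)$, and both yield an integrable majorant via \eqref{max-principle}.
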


\begin{proof} a) The map $(t,x) \mapsto G_t(x)$ is measurable on $\bR_{+} \times \bR^d$. By Fubini's theorem,
\[
(\pmb{t},\pmb{x}) \mapsto \big(f_n(t_1,\cdot,\ldots,t_n,\cdot,t,x)* \k^{\otimes n}\big)(\pmb{x}) \quad \mbox{is measurable on $T_n(t) \times (\bR^d)^n$}.
\]

b) We apply Proposition 1.2.25 of \cite{HNVW} to the spaces $S=T_n(t),T=\bR^{nd},X=\bR$ and the function $F_n:T_n(t)\to L^2(\bR^{nd})$ given by 
\[
F_n(\pmb{t})=f_n(t_1,\cdot,\ldots,t_n,\cdot,t,x)*\k^{\otimes n}.
\] 
By Lemmas \ref{lem-hypB'} and \ref{lem-add}, Hypotheses B' and C are satisfied, since (D) holds.

$F_n$ is well-defined, since by Hypothesis B' and \eqref{Jn1}, for any $\pmb{t}\in T_n(t)$,
\[
\|f_n(t_1,\cdot,\ldots,t_n,\cdot,t,x)*\k^{\otimes n}\|_{L^2(\bR^{nd})}^2=\frac{1}{(2\pi)^{nd}}\int_{\bR^{nd}} \big|\cF \big(f_n(t_1,\cdot,\ldots,t_n,\cdot,t,x)*\k^{\otimes n} \big)(\pmb{\xi}) \big|^2 d\pmb{\xi}<\infty.
\]
Moreover, by Hypothesis C, $F_n$ is continuous, since if $\pmb{t}^{(k)} \to \pmb{t}\in T_n(t)$ as $k\to \infty$, then
\begin{align*}
& \|F_n(\pmb{t}^{(k)})-F_n(\pmb{t})\|_{L^2(\bR^{nd})}^2 =\\
& \quad \int_{(\bR^d)^n} \Big| \prod_{j=1}^{n} \cF G_{t_{j+1}^{(k)}-t_j^{(k)}}(\xi_1+\ldots+\xi_j)- \prod_{j=1}^{n} \cF G_{t_{j+1}-t_j}(\xi_1+\ldots+\xi_j)\Big|^2 \mu(d\xi_1) \ldots \mu(d\xi_n) \to 0,
\end{align*}
as $k \to \infty$, by the dominated convergence theorem. To justify the application of this theorem, we use the following inequality, which can be proved by induction: for any $z_1,w_1,\ldots,z_n,w_n \in \bC$,
\[
\Big|\prod_{j=1}^n z_j - \prod_{j=1}^n w_j \Big| \leq \sum_{j=1}^n |z_1 \ldots z_{j-1}||z_j-w_j| |w_{j+1}\ldots w_n|.
\]
Hence,
\begin{align*}
& \Big| \prod_{j=1}^{n} \cF G_{t_{j+1}^{(k)}-t_j^{(k)}}\big(\sum_{\ell=1}^j\xi_{\ell}\big)- \prod_{j=1}^{n} \cF G_{t_{j+1}-t_j}\big(\sum_{\ell=1}^j\xi_{\ell}\big)\Big|^2 \\
& \quad \leq n \sum_{j=1}^{n} \prod_{i=1}^{j-1} \big| \cF G_{t_{i+1}^{(k)}-t_i^{(k)}}\big(\sum_{\ell=1}^i\xi_{\ell}\big)\big|^2 
\big|\cF G_{t_{j+1}^{(k)}-t_j^{(k)}}\big(\sum_{\ell=1}^j\xi_{\ell}\big)-  \cF G_{t_{j+1}-t_j}\big(\sum_{\ell=1}^j\xi_{\ell}\big)\big|^2\\
& \quad \quad \quad
\prod_{i=j+1}^n\big|\cF G_{t_{i+1}-t_i}\big(\sum_{\ell=1}^i\xi_{\ell}\big)\big|^2\\
& \quad \leq 8n D_t^{n-1} \sum_{j=1}^n \prod_{i=1}^n \frac{1}{1+|\xi_1+\ldots+\xi_i|^2},
\end{align*}
which is integrable with respect to $\mu(d\xi_1)\ldots \mu(d\xi_n)$. For this bound, we used estimate \eqref{bound-Gw}, and the fact that for any $t>0$, $h\in [-1,1]$ and $\xi \in \bR^d$,
\[
|\cF G_{t+h}(\xi)-\cF G_t(\xi)|^2 \leq 1_{\{|\xi|\leq 1\}}+\frac{4}{|\xi|^2}1_{\{|\xi|>1\}}\leq \frac{8}{1+|\xi|^2},
\]
which was shown in the proof of Lemma \ref{lem-add}.

Hence, $F_n$ is measurable with respect to the Borel $\sigma$-field of $L^2(\bR^{nd})$.
Since $L^2(\bR^{nd})$ is separable, $F_n$ is strongly measurable (by Corollary 1.1.10 of \cite{HNVW}), and because the Lebesgue measure $\lambda_{nd}$ on $\bR^{nd}$ is $\sigma$-finite, $F_n$ is strongly $\lambda_{nd}$-measurable (by Proposition 1.1.16, {\em ibid}). Moreover,
\[
\int_{T_n(t)}\|F_n(t)\|_{L^2(\bR^{nd})}^2 d\pmb{t}=\int_{T_n(t)} \int_{\bR^{nd}}|\cF f_n(t_1,\cdot,\ldots,t_n,\cdot,t,x)(\pmb{\xi})|^2 \mu(d\xi_1)\ldots \mu(d\xi_n)=J_n(t)<\infty,
\]
and hence, $\int_{T_n(t)} \|F_n(t)\|_{L^2(\bR^{nd})} dt <\infty$. By Proposition 1.2.2, {\em ibid.}, $F_n$ is Bochner integrable. The conclusion follows by Proposition 1.2.25, {\em ibid}.

\end{proof}

\begin{corollary}
Let $d\geq 1$ be arbitrary. 
Suppose that $\cL=\frac{\partial}{\partial t}-\frac{1}{2}\Delta$  or
$\cL=\frac{\partial^2}{\partial t^2}-\Delta$. If condition (D) holds, then equation \eqref{mult-eq} has a unique solution.
\end{corollary}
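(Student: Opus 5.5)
By the preceding lemmas, Hypotheses A', B', D' and E' hold for the heat and wave operators in any dimension $d\geq 1$ whenever (D) holds. The previous theorem therefore reduces the corollary to verifying the summability condition \eqref{sum-Jn}, namely $\sum_{n\geq 1} m_2^n t^n J_n(t)<\infty$ for every $t>0$. The plan is to prove a bound of the form $J_n(t)\leq C^n t^{n}/(n!)^{\gamma}$ with some $\gamma>0$. Then the terms $(Cm_2t^2)^n/(n!)^{\gamma}$ are summable, and the corollary follows.

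The first step is to bound $J_n(t)$ by integrating in the $\xi$ variables one at a time, starting from $\xi_n$, and using the supremum over shifts. Fix $\pmb{t}\in T_n(t)$ and integrate first in $\xi_n$, which appears only in the factor $|\cF G_{t-t_n}(\xi_1+\ldots+\xi_n)|^2$. Taking the supremum over the shift $\eta=\xi_1+\ldots+\xi_{n-1}$ bounds this integral by $K(t-t_n)$. Repeating the argument gives
\[
J_n(t)\leq \int_{T_n(t)}\prod_{j=1}^n K(t_{j+1}-t_j)\,d\pmb{t},
\]
which is the bound already recorded after \eqref{Jn1}. The crude estimate $A_t^n$ is not enough, since it gives no factorial decay. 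Instead I will use pointwise bounds on $K$. For the wave equation, \eqref{bound-Gw} and \eqref{max-principle} give $K(s)\leq D_t C_\mu$ for $s\leq t$. Hence $J_n(t)\leq (D_tC_\mu)^n |T_n(t)| = (D_tC_\mu)^n t^n/n!$, and the series $\sum (m_2 D_t C_\mu t^2)^n/n!$ converges.

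For the heat equation, $K(s)=\int e^{-s|\xi|^2}\mu(d\xi)$ may blow up as $s\downarrow 0$, so a bounded estimate is not available. This is the main obstacle. I would first use the standard splitting: for any $N>0$,
\[
K(s)\leq \mu(\{|\xi|\leq N\})+\int_{|\xi|>N}e^{-s|\xi|^2}\mu(d\xi)=:a_N+k_N(s),
\]
and by (D), $\int_0^\infty k_N(s)\,ds\leq \int_{|\xi|>N}|\xi|^{-2}\mu(d\xi)=:\varepsilon_N\to 0$ as $N\to\infty$. Expanding the product $\prod_j (a_N+k_N(t_{j+1}-t_j))$ over subsets $S\subseteq\{1,\ldots,n\}$, I bound each simplex integral by integrating the $k_N$ factors over the corresponding increments (extending to $[0,\infty)$). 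This costs a factor $\varepsilon_N^{|S|}$. The remaining $n-|S|$ constant factors $a_N$ are integrated over a simplex of dimension $n-|S|$, which gives $a_N^{n-|S|}t^{n-|S|}/(n-|S|)!$. The result is
\[
J_n(t)\leq \sum_{k=0}^n\binom{n}{k}\varepsilon_N^{k}\frac{(a_Nt)^{n-k}}{(n-k)!}.
\]
Then $\sum_n (m_2t)^n J_n(t)\leq \sum_{k,j}\binom{k+j}{k}(m_2t\varepsilon_N)^k (m_2ta_N)^j/j!$. Using $\binom{k+j}{k}\leq 2^{k+j}$, this is at most $\sum_k (2m_2t\varepsilon_N)^k\cdot e^{2m_2ta_N}$, which is finite once $N$ is chosen large enough that $2m_2t\varepsilon_N<1$. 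The same splitting also works for the wave equation. If the simplex bookkeeping in the integration of the $k_N$ factors turns out to be delicate, the fallback is the Laplace-transform trick: bound $J_n(t)\leq e^{\lambda t}\big(\int_0^\infty e^{-\lambda s}K(s)ds\big)^n$ and choose $\lambda$ large so that $m_2 t\int_0^\infty e^{-\lambda s}K(s)ds<1$. This is possible because, by (D), $\int_0^\infty e^{-\lambda s}K(s)ds=\int(\lambda+|\xi|^2)^{-1}\mu(d\xi)\to0$ as $\lambda\to\infty$. I would actually use this cleaner route for both equations, with $\int_0^\infty e^{-\lambda s}|\cF G_s(\xi)|^2ds\leq C(\lambda+|\xi|^2)^{-1}$ in the wave case. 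Finally, uniqueness is immediate from the definition, since the chaos kernels $f_n^*$ are determined explicitly.
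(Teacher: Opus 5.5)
Your proposal is correct, and for the wave equation it matches the paper: $K(s)\le D_tC_\mu$ for $s\le t$, hence $J_n(t)\le (D_tC_\mu)^n t^n/n!$. In the heat case you reach the same intermediate bound $J_n(t)\le\int_{T_n(t)}\prod_j K(t_{j+1}-t_j)\,d\pmb{t}$, with $K(s)=\int e^{-s|\xi|^2}\mu(d\xi)$ by \eqref{max-heat}, which is the paper's $h_n(t)$. You then part ways with the paper on how summability is obtained. The paper simply invokes Lemma 3.8 of \cite{BC18} to get $\sum_n m_2^nt^nh_n(t)<\infty$. You prove it directly, in two ways:
\begin{itemize}
\item by the splitting $K\le a_N+k_N$ with $\int_0^\infty k_N\le\varepsilon_N\to0$ under (D);
\item by the Laplace bound $h_n(t)\le e^{\lambda t}\big(\int(\lambda+|\xi|^2)^{-1}\mu(d\xi)\big)^n$.
\end{itemize}
Both are sound. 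The second is the cleaner one: it gives $\sum_n(m_2t)^nJ_n(t)\le e^{\lambda t}\big/\big(1-m_2t\int(\lambda+|\xi|^2)^{-1}\mu(d\xi)\big)$ once $\lambda$ is large, and it treats heat and wave uniformly. Your route buys a self-contained argument using only (D); the paper's citation buys brevity.

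Two small points. First, in the splitting argument the simplex volume contributes a factor $t^{n-k}$ on top of the prefactor $(m_2t)^n$. The $j$-series should therefore be $\sum_j(2m_2a_Nt^2)^j/j!$ rather than $\sum_j(2m_2ta_N)^j/j!$; this is harmless. Second, suppose you run the Laplace trick for the wave equation inside the $\xi$-integrals, using $\int_0^\infty e^{-\lambda s}|\cF G_s(\zeta)|^2ds=2/\big(\lambda(\lambda^2+4|\zeta|^2)\big)$. You then need $\sup_\eta\int(\lambda+|\xi+\eta|^2)^{-1}\mu(d\xi)=\int(\lambda+|\xi|^2)^{-1}\mu(d\xi)$ for general $\lambda$, not just $\lambda=1$ as in \eqref{max-principle}. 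This follows from \eqref{max-heat} by writing $(\lambda+|\zeta|^2)^{-1}=\int_0^\infty e^{-\lambda s}e^{-s|\zeta|^2}ds$. Alternatively, avoid it altogether: use $K(s)\le D_sC_\mu$ and note that $\int_0^\infty e^{-\lambda s}D_s\,ds\to0$ as $\lambda\to\infty$.
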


\begin{proof}
i) We consider first the heat equation. 
By \eqref{max-heat},
\[
J_n(t) \leq \int_{T_n(t)} \int_{(\bR^d)^n} \prod_{j=1}^{n}e^{-(t_{j+1}-t_j)|\xi_j|^2} \mu(d\xi_1)\ldots \mu(d\xi_n) d\pmb{t}=:h_n(t).
\]
Therefore, invoking Lemma 3.8 of \cite{BC18}, we infer that
\[
\sum_{n\geq 1} m_2^n t^n J_n(t)\leq \sum_{n\geq 1}m_2^nt^n h_n(t)<\infty \quad \mbox{for any $t>0$}.
\]

\medskip

ii) We consider next the wave equation. Then \eqref{sum-Jn} holds, since by \eqref{bound-Gw} and \eqref{max-principle}, 
\[
J_n(t) \leq D_t^n \int_{T_n(t)} \int_{(\bR^d)^n} \prod_{j=1}^{n}\frac{1}{1+|\xi_1+\ldots+\xi_j|^2}\mu(d\xi_1)\ldots \mu(d\xi_n)d\pmb{t} \leq D_t^n C_{\mu}^n \frac{t^n}{n!}.
\]
\end{proof}

\begin{remark}[Connection with the Gaussian case]
{\rm
We consider the following equation
\begin{equation}
\label{mult-eqG}
\cL U(t,x)=\sqrt{m_2} U(t,x)\dot{W}(t,x), \quad t>0,x\in \bR^d,
\end{equation}
with initial condition 1, where $\{W(\varphi);\varphi \in \cP_{0,d}(\bR^d)\}$ is an isonormal Gaussian process with covariance:
\[
\bE[W(\varphi)W(\psi)]=\langle \varphi,\psi \rangle_{0}
\]
and $\langle \cdot,\cdot \rangle_{0}$ is given by \eqref{def-prod0}. The solution of equation \eqref{mult-eqG} is defined similarly as above, using the Skorohod integral with respect to $W$.
Moreover, under Hypotheses A', B', C' and E', equation \eqref{mult-eqG} has a unique solution, whose Wiener-chaos expansion is
\[
U(t,x)=1+\sum_{n\geq 1}m_2^{n/2}I_n^{W}\big(f_n(\cdot,x;t)\big),
\]
where $I_n^{W}$ is the multiple Wiener integral of order $n$ with respect to $W$, and
\[
f_n(x_1,\ldots,x_n,x;t)=\int_{T_{n}(t)}f_n(t_1,x_1,\ldots,t_n,x_n,t,x)d\pmb{t}.
\]

By direct calculation,
$\widetilde{f}_n^{*}(x_1,z_1,\ldots,x_n,z_n,x;t)=\prod_{i=1}^{n}z_i (\widetilde{f}_n\big(\cdot,x;t)*\k^{\otimes}\big)(\pmb{x}),
$
and hence, $\|\widetilde{f}_n^{*}(\cdot,x;t)\|_{\cH^{\otimes n}}^2=m_2^n \|\widetilde{f}_n\big(\cdot,x;t)\|_{(\cP_{0,d}(\bR^d))^{\otimes n}}^2$. This means that the solutions $u$ and $U$ have the same second moments:
\[
\bE|u(t,x)|^2=1+\sum_{n\geq 1}n!\|\widetilde{f}_n^{*}(\cdot,x;t)\|_{\cH^{\otimes n}}^2=
1+\sum_{n\geq 1}m_2^n n!\|\widetilde{f}_n\big(\cdot,x;t)\|_{(\cP_{0,d}(\bR^d))^{\otimes n}}^2=\bE|U(t,x)|^2.
\]
}
\end{remark}

\appendix

\section{Distributions with rapid decrease}
\label{app-distr}

In this section, we recall some properties of distributions which are needed in the sequel, following Chapter VII of \cite{schwartz66}.  In this section, all functions and distributions are considered on $\bR^d$, but we drop $\bR^d$ from the notation, e.g. we write $\cD$ instead of $\cD(\bR^d)$.

\medskip

We begin by recalling the definitions of several spaces of functions, and of distributions.  
\begin{itemize}
\item $\cD$ is the set of $C^{\infty}$-functions with compact support, and $\cD'$ is the set of distributions.

\item $\cS$ is the set of  rapidly decreasing $C^{\infty}$-functions, and $\cS'$ is the set of tempered distributions. A $C^{\infty}$-function $\varphi:\bR^d \to \bR$ is
{\em rapidly decreasing} if for any $\alpha=(\alpha_1,\ldots,\alpha_d),\beta=(\beta_1,\ldots,\beta_d) \in \bN^d$, 
\[
\sup_{x\in \bR^d}|x^{\alpha}D^{\beta} \varphi(x)|<\infty,
\]
where $x^{\alpha}=x_1^{\alpha_1}\ldots x_d^{\alpha_d}$ and $D^{\beta}=\frac{\partial^{|\beta|}}{\partial x_1^{\beta_1}\ldots \partial x_d^{\beta_d}}$, with $|\beta|=\sum_{i=1}^d \beta_i$.

\item $\cE$ is the set of $C^{\infty}$-functions, and $\cE'$ is the set of distributions with compact support.

\item $\cO_M$ is the set of $C^{\infty}$-functions $\varphi$ with {\em slow increase} (see p.243 of \cite{schwartz66}), i.e. for any $\alpha=(\alpha_1,\ldots,\alpha_d) \in \bN^d$, there exist $C>0$ and $k \in \bN$ such that
\[
|D^{\alpha} \varphi(x)| \leq C (1+x^2)^k \quad \mbox{for all $x \in \bR^d$}.
\]

\item $\cO_C'$ is the set of distributions $F$ {\em with rapid decrease} (see p.243 of \cite{schwartz66}), i.e.
     \[
     \mbox{$(1+x^2)^{k/2}F$ is a bounded distribution, for any $k \in \bN$.}
      \]
      (The space of bounded distributions is the dual of $\cD_{L^1}=\{\varphi \in \cE; \, D^{\alpha} \varphi \in L^1(\bR^d) \linebreak \mbox{for all $\alpha \in \bN^d$} \}$; see p. 200 of \cite{schwartz66}.)

\end{itemize}

Then
\[
\cD \subset \cS \subset \cO_M \subset \cE \quad \mbox{and} \quad \cE' \subset \cO_C' \subset \cS' \subset \cD'.
\]

If $\varphi \in \cD$ and $F \in \cD'$, then $\varphi F$ and $\varphi * F$ are distributions in $\cD'$ defined by:
\[
\big( \varphi F,\psi\big)=\big(F,\varphi \psi \big) \quad \mbox{and} \quad \big(\varphi *F, \psi\big) =\big(F,\psi * \widetilde{\varphi}\big) 
\]
for all $\psi \in \cD$, 
where $\widetilde{\varphi}(x)=\varphi(-x)$.

\medskip

We recall the following properties of convolutions (see p. 245 of \cite{schwartz66}):\\
a) $\varphi * F \in \cD$ for any $\varphi \in \cD$  and $F \in \cE'$,  where $(\varphi *F)(x):=\big( F, \varphi(x-\cdot)\big)$ for any $x \in \bR^d$;\\
b)  $\varphi * F \in \cS$ for any $\varphi \in \cD$  and $F \in \cO_C'$;\\ 
c)  $\varphi * F \in \cO_M$ for any $\varphi \in \cD$  and $F \in \cS'$.

\medskip

The {\em Fourier transform} of $F \in \cS'$ is a distribution $\cF F \in \cS'$ defined by:
\[
\big( \cF F,\psi\big)=\big(F,\cF \varphi\big) \quad \mbox{for all $\varphi \in \cS$}.
\]

If $G,F \in \cD'$, then the convolution $G*F \in \cD'$ is defined by
\[
\big( G*F,\varphi\big)=\big(F,\varphi* \widetilde{G} \big) \quad \mbox{for all $\varphi \in \cD$},
\]
where $\widetilde{G}\in \cD'$ is defined by $\big(\widetilde{G}, \varphi\big)=\big( G,\widetilde{\varphi}\big)$ for all $\varphi \in \cD$.

\begin{theorem}[Theorem XI, Chapter VII, p.247 of \cite{schwartz66}] 
\label{GF1}
For any $G \in \cO_C'$ and $F \in \cS'$, 
\[
G*F \in \cS'.
\] 

\end{theorem}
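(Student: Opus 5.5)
The plan is to define the convolution through the duality formula recalled just above, $(G*F,\varphi)=(F,\varphi*\widetilde{G})$, and to show that it is a well-defined, continuous linear functional on $\cS$. Then we check that it agrees with the $\cD'$ definition on $\cD$.

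The first step is to verify that $\widetilde{G}\in\cO_C'$ whenever $G\in\cO_C'$. This holds because reflection preserves bounded distributions, and it commutes with multiplication by the weight $(1+x^2)^{k/2}$, which is even.

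The second, and central, step is to prove that the map $\varphi\mapsto\varphi*\widetilde{G}$ sends $\cS$ continuously into $\cS$. Property b) above only gives $\cD\to\cS$, so it has to be upgraded. I would use the characterization of $\cO_C'$ (Schwartz, Chapter VII, \S5): for every $k$, $G$ can be written as a finite sum $G=\sum_{|\beta|\le m}D^\beta g_\beta$, with continuous functions $g_\beta$ satisfying $(1+|x|^2)^{k}|g_\beta(x)|\le C$. Then
\[
D^\alpha(\varphi*\widetilde{G})=\sum_{\beta}(-1)^{|\beta|}\,(D^{\alpha+\beta}\varphi)*\widetilde{g}_\beta .
\]
Combining this with Peetre's inequality $(1+|x|^2)^{j}\le 2^j(1+|x-y|^2)^j(1+|y|^2)^j$ and choosing $k$ large gives
\[
\sup_x (1+|x|^2)^j|D^\alpha(\varphi*\widetilde{G})(x)|\le C\sum_{|\beta|\le m}\sup_y(1+|y|^2)^j|D^{\alpha+\beta}\varphi(y)|.
\]
This yields continuity of the map $\cS\to\cS$ with respect to the Schwartz seminorms.

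Composing with $F\in\cS'$ then shows that $\varphi\mapsto(F,\varphi*\widetilde{G})$ is continuous on $\cS$, so $G*F\in\cS'$. Since $\cD$ is dense in $\cS$, this extension is unique and coincides with the $\cD'$ convolution.

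The main obstacle is the structural representation of $\cO_C'$ distributions as finite sums of derivatives of rapidly decreasing continuous functions. I would cite it from Schwartz rather than reprove it. If one avoids it, the alternative is to prove that $\varphi*\widetilde{G}\in\cS$ for $\varphi\in\cS$ directly from the boundedness of $(1+x^2)^{k/2}G$ on $\cD_{L^1}$, together with a closed-graph argument for continuity.
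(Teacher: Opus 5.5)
Your argument is correct. The paper gives no proof of this statement: it quotes it from Schwartz (Theorem XI of Chapter VII) and uses it as a black box. What you wrote is essentially the standard proof behind that citation. You show that $\varphi\mapsto\varphi*\widetilde{G}$ is a continuous map $\cS\to\cS$, using the structure theorem for $\cO_C'$ (finite sums of derivatives of continuous functions with arbitrary polynomial decay) together with Peetre's inequality. You then define $G*F$ by transposition, and density of $\cD$ in $\cS$ identifies it with the $\cD'$ convolution.

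Two small remarks on the details. First, the decomposition of $G$ must be chosen depending on the seminorm index $j$. With your normalization $(1+|x|^2)^k|g_\beta(x)|\le C$, you need $k>j+d/2$ so that $\int_{\bR^d}(1+|w|^2)^j|g_\beta(w)|\,dw<\infty$; this is what ``choosing $k$ large'' has to mean, and it is harmless because $\varphi*\widetilde{G}$ does not depend on the decomposition. Second, your first step ($\widetilde{G}\in\cO_C'$) is not needed once you use the structure theorem, since the decomposition of $G$ directly gives $\widetilde{G}=\sum_\beta(-1)^{|\beta|}D^\beta\widetilde{g}_\beta$.

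Compared with the bare citation, your route shows why $\cO_C'$ is the natural hypothesis: these distributions are exactly the convolutors of $\cS$. It also upgrades property b) of the appendix from $\varphi\in\cD$ to $\varphi\in\cS$, with continuity, so the defining pairing $(F,\varphi*\widetilde{G})$ can be taken over all of $\cS$. On the other hand, you also take the structure theorem from Schwartz, so your proof shifts the dependence on that reference rather than removing it.
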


\begin{theorem}[Theorem XV, Chapter VII, p.268, ibid.]  
\label{GF2}
a) For any $G \in \cO_C'$, $\cF G \in \cO_M$.

b) For any $G \in \cO_C'$ and $F \in \cS'$,  
\[
\cF(G*F) =\cF G \cdot \cF F.
\]
\end{theorem}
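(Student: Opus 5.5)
The statement is Theorem \ref{GF2} (Schwartz, Theorem XV): for $G\in\cO_C'$ we have $\cF G\in\cO_M$, and $\cF(G*F)=\cF G\cdot\cF F$ for $F\in\cS'$. I would prove it by duality, reducing everything to statements about test functions.

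\emph{Part a).} Since $G\in\cO_C'$, for every $k$ the distribution $(1+|x|^2)^{k}G$ is bounded. A bounded distribution is a finite sum of derivatives of bounded continuous functions, and is in particular of finite order, so it can be paired with $\cD_{L^1}$ functions. For fixed $\xi$ I would define
\[
\cF G(\xi)=\big(G,e^{-i\xi\cdot x}\big)=\big((1+|x|^2)^kG,(1+|x|^2)^{-k}e^{-i\xi\cdot x}\big).
\]
This makes sense because, for $k$ large, $(1+|x|^2)^{-k}e^{-i\xi\cdot x}\in\cD_{L^1}$. Differentiating in $\xi$ under the pairing gives
\[
D^\alpha\cF G(\xi)=\big((1+|x|^2)^kG,(1+|x|^2)^{-k}(-ix)^\alpha e^{-i\xi\cdot x}\big).
\]
For $k>|\alpha|/2+d$, the $\cD_{L^1}$ seminorms of the test function are bounded by a polynomial in $|\xi|$ whose degree is at most the order of the bounded distribution. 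This gives $|D^\alpha\cF G(\xi)|\le C(1+|\xi|^2)^{m}$, i.e. $\cF G\in\cO_M$. Finally, I would check that this pointwise function agrees with the $\cS'$ Fourier transform by pairing with $\psi\in\cS$ and exchanging the order of the pairings (Fubini for distributions).

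\emph{Part b).} First I would show that $\varphi\mapsto\varphi*\widetilde G$ maps $\cS$ continuously into $\cS$. For $\varphi\in\cD$ this is property b) listed earlier; the general case uses the same seminorm estimates. This is exactly what makes $G*F\in\cS'$ well defined (Theorem \ref{GF1}). Then, for $\psi\in\cS$,
\[
\big(\cF(G*F),\psi\big)=\big(G*F,\cF\psi\big)=\big(F,\cF\psi*\widetilde G\big).
\]
The key identity is
\[
\cF\psi*\widetilde G=\cF\big(\cF G\cdot\psi\big).
\]
This holds for $\psi\in\cS$ and is checked pointwise: $(\cF\psi*\widetilde G)(x)=(G,\cF\psi(\cdot+x))$, and inserting the Fourier integral for $\cF\psi$ and exchanging gives $\int\psi(\xi)e^{-i\xi\cdot x}\cF G(\xi)\,d\xi$. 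Since $\cF G\in\cO_M$ by part a), the product $\cF G\cdot\psi$ lies in $\cS$. Hence
\[
\big(F,\cF(\cF G\,\psi)\big)=\big(\cF F,\cF G\,\psi\big)=\big(\cF G\cdot\cF F,\psi\big),
\]
which proves b).

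The main obstacle is justifying the interchange of the distributional pairing with the Fourier integral, in both a) and b). My first attempt would be to approximate $G$ by $G_n=\chi(x/n)G\in\cE'$, where the interchange is classical (Paley–Wiener). I would then pass to the limit using the uniform control coming from $(1+|x|^2)^kG$ being bounded, which yields $G_n\to G$ in $\cO_C'$ and $\cF G_n\to\cF G$ in $\cO_M$ locally uniformly together with all derivatives.
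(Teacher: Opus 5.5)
The paper gives no proof of this statement. It is quoted from Schwartz (Theorem XV, Chapter VII) and used as a black box: in Lemma \ref{S-lem} to get $\cF(S*\k)=\cF S\,\cF\k$, and in its $n$-fold form in Section \ref{section-mult}. So there is no in-paper argument to compare with.

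Your duality proof is correct and self-contained. In part a), the pairing formula $\cF G(\xi)=\big((1+|x|^2)^kG,(1+|x|^2)^{-k}e^{-i\xi\cdot x}\big)$, with $k$ chosen depending on $\alpha$, gives the $\cO_M$ bounds. The growth degree may depend on $\alpha$, which the definition of $\cO_M$ allows. Part b) follows from the transposed identity $\cF\psi*\widetilde G=\cF(\cF G\cdot\psi)$ together with $\cF G\cdot\psi\in\cS$.

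The one step you only assert is the continuity of $\varphi\mapsto\varphi*\widetilde G$ on $\cS$. It takes one line. Write $\widetilde G=(1+|y|^2)^{-k}T_k$ with $T_k$ bounded of some order $m$, and use Peetre's inequality $(1+|y|^2)^{-k}\le 2^k(1+|x|^2)^{-k}(1+|x-y|^2)^{k}$. This gives $|D^\beta(\varphi*\widetilde G)(x)|\le C(1+|x|^2)^{-k}\sum_{|\gamma|\le m+|\beta|}\|(1+|\cdot|^2)^kD^\gamma\varphi\|_{L^1}$.

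The interchange you flag as the main obstacle is easier than you expect, and the cutoff $\chi(x/n)G$ with Paley--Wiener is not needed.

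\emph{In a).} The map $\xi\mapsto(1+|x|^2)^{-k}e^{-i\xi\cdot x}\psi(\xi)$ is continuous into the Fr\'echet space $\cD_{L^1}$. Its seminorms are $O\big((1+|\xi|)^m|\psi(\xi)|\big)$, hence integrable in $\xi$. So $\int\cdot\,d\xi$ converges in $\cD_{L^1}$ to $(1+|x|^2)^{-k}\cF\psi$, and the continuous functional $(1+|x|^2)^kG$ commutes with it.

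\emph{In b).} Nothing new is needed. Set $\psi_x(\xi):=\psi(\xi)e^{-i\xi\cdot x}\in\cS$; then $\cF\psi(x+\cdot)=\cF\psi_x$. Hence $(G,\cF\psi(x+\cdot))=(\cF G,\psi_x)=\int\cF G(\xi)\psi(\xi)e^{-i\xi\cdot x}d\xi$ is exactly the identification of the pointwise function with the $\cS'$ transform already proved in a).

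Your approximation route would also work. The pairing formula bounds $\cF(\chi(\cdot/n)G)$ by a polynomial uniformly in $n$, since $\chi(\cdot/n)$ and all its derivatives are bounded uniformly in $n$, so dominated convergence applies. It is a detour, though.
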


\begin{lemma}
\label{SO-lem}
Any function $\varphi \in \cS$ can be viewed as a distribution in $\cO_{C}'$.
\end{lemma}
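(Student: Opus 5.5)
The plan is to check the definition of $\cO_C'$ directly: for a given $\varphi \in \cS$ and each $k \in \bN$, we show that $(1+|x|^2)^{k/2}\varphi$ is a bounded distribution, i.e.\ that it belongs to the dual of $\cD_{L^1}$. First, $\varphi$ defines a distribution through $(\varphi,\psi)=\int_{\bR^d}\varphi(x)\psi(x)dx$. Because $\varphi$ decays rapidly, it is integrable, so this is a tempered distribution. Put $g_k(x)=(1+|x|^2)^{k/2}\varphi(x)$. The factor $(1+|x|^2)^{k/2}$ is $C^\infty$ and of slow increase, so $g_k$ still lies in $\cS$; in particular $g_k$ is bounded and integrable. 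For the distribution $(1+|x|^2)^{k/2}\varphi$ (multiplication by a $C^\infty$ function) we get, directly from the definition of multiplication, $\big((1+|x|^2)^{k/2}\varphi,\psi\big)=\int g_k\psi\,dx$.

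Next, for every $\psi \in \cD_{L^1}$ we estimate $|\int g_k\psi\,dx| \le \|g_k\|_{L^1}\sup_x|\psi(x)|$. A function in $\cD_{L^1}$ is bounded together with all its derivatives, since they are $L^1$ with $L^1$ derivatives (the Sobolev embedding $W^{d,1}\subset L^\infty$ gives this). Alternatively, we can use the cruder bound $|\int g_k\psi| \le \|g_k\|_{L^\infty}\|\psi\|_{L^1}$, which is controlled by one of the seminorms defining the topology of $\cD_{L^1}$, namely the $L^1$ norm of $D^0\psi$. It follows that $\psi\mapsto\int g_k\psi$ is a continuous linear functional on $\cD_{L^1}$, so it is a bounded distribution. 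Since $k$ was arbitrary, $\varphi\in\cO_C'$.

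The one delicate point is the topology on $\cD_{L^1}$ and the identification of bounded distributions with its dual. This is exactly why I use the $\|\psi\|_{L^1}$ bound: it relies only on the seminorm $\|D^0\psi\|_{L^1}$, which is continuous in the Schwartz topology of $\cD_{L^1}$. Continuity then follows immediately and does not require any embedding theorem. As an alternative route, we could quote Schwartz's characterization (p.\ 244 of \cite{schwartz66}): $F\in\cO_C'$ if and only if each $F$ is a finite sum of derivatives of continuous functions with rapid decrease at infinity. With $\varphi$ itself as the single continuous rapidly decreasing function, the claim follows at once.
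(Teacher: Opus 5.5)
Your proposal is correct and follows essentially the same route as the paper: you note that $g_k=(1+|x|^2)^{k/2}\varphi\in\cS$ is bounded, and you use the estimate $|\int g_k\psi\,dx|\le\|g_k\|_{L^\infty}\|\psi\|_{L^1}$ to get a continuous linear functional on $\cD_{L^1}$ agreeing with $(1+|x|^2)^{k/2}\varphi$, exactly as the paper does with $b_k$ and $T_k$. The Sobolev-embedding remark and the appeal to Schwartz's structure theorem are unnecessary extras, but they do no harm.
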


\begin{proof}
Let $T_{\varphi}$ be the distribution induced by $\varphi$, given by $(T_{\varphi},\phi)=\int_{\bR^d}\varphi(x) \phi(x) dx$ for all $\phi \in \cD$. We want to prove for any $k\in \bN$,
\[
(1+x^2)^{k/2}T_\varphi\in (D_{L^1})'.
\]
For any $k\in\bN$, $b_k:=(1+x^2)^{k/2}\varphi \in \cS(\bR^d)$, hence it is a bounded function. For any $\psi\in D_{L^1}$, let $T_k(\psi):=\int_{\bR^d} b_k(x)\psi(x)dx$. Note that $T_k$ is well-defined, since
\[
\left|\int_{\bR^d} b_k(x)\psi(x)dx\right| \le \|b_k\|_\infty \|\psi\|_{L^1}.
\]
Moreover, $T_k$ is linear and continuous on $D_{L^1}$, so $T_k\in (D_{L^1})'$. $T_k$ is exactly the distribution associated with $(1+|x|^2)^{k/2}T_{\varphi}$. Hence, $(1+x^2)^{k/2}T_\varphi=T_k \in (D_{L^1})'$.

\end{proof}

\end{document}